
\documentclass[a4paper, 10pt]{amsart}

\usepackage{amsmath}
\usepackage{amssymb}
\usepackage{amsthm}
\usepackage{mathtools, verbatim}
\usepackage[utf8]{inputenc}
\usepackage{indentfirst}
\usepackage{enumerate}
\usepackage[colorlinks=true]{hyperref}
\hypersetup{urlcolor=blue, citecolor=red}
     
\usepackage{graphicx}


\usepackage[usenames,dvipsnames]{pstricks}
\usepackage{epsfig}
\usepackage{pst-grad} 
\usepackage{pst-plot} 

\usepackage{ifthen}
\usepackage{color, xcolor}





\def\sideremark#1{\ifvmode\leavevmode\fi\vadjust{\vbox to0pt{\vss
 \hbox to 0pt{\hskip\hsize\hskip1em
 \vbox{\hsize2.1cm\tiny\raggedright\pretolerance10000
  \noindent #1\hfill}\hss}\vbox to15pt{\vfil}\vss}}}%

\allowdisplaybreaks

\usepackage{fullpage}
\numberwithin{equation}{section}

\def\polhk#1{\setbox0=\hbox{#1}{\ooalign{\hidewidth\lower1.5ex\hbox{`}\hidewidth\crcr\unhbox0}}}

\def\Xint#1{\mathchoice
{\XXint\displaystyle\textstyle{#1}}%
{\XXint\textstyle\scriptstyle{#1}}%
{\XXint\scriptstyle\scriptscriptstyle{#1}}%
{\XXint\scriptscriptstyle\scriptscriptstyle{#1}}%
\!\int}
\def\XXint#1#2#3{{\setbox0=\hbox{$#1{#2#3}{\int}$ }
\vcenter{\hbox{$#2#3$ }}\kern-.6\wd0}}

\def\dashint{\Xint-}

\newcommand{\dist}{\operatorname{dist}}

\newcommand{\R}{\mathbb{R}}
\newcommand{\eps}{\varepsilon}

\theoremstyle{plain}
\newtheorem{Theorem}{Theorem}[section]

\newtheorem{Lemma}[Theorem]{Lemma}

\newtheorem{Proposition}[Theorem]{Proposition}
\newtheorem{Remark}[Theorem]{Remark}

\begin{document}


\title{ Spectral partition problems with volume and inclusion constraints}

\author{P\^edra D. S. Andrade, Ederson Moreira dos Santos, Makson S. Santos and Hugo Tavares}

\maketitle
	
\begin{abstract}

 In this paper we discuss a class of spectral partition problems with a measure constraint, for partitions of a given bounded connected open set. We establish the existence of an optimal open partition, showing that the corresponding eigenfunctions are locally Lipschitz continuous, and obtain some qualitative properties for the partition. The proof uses an equivalent weak formulation that involves a minimization problem of a penalized functional where the variables are functions rather than domains, suitable deformations, blowup techniques and a monotonicity formula.

\end{abstract}
\medskip

\noindent \textbf{Keywords}: Optimal open partition, Shape optimization, Multiphase problems, Regularity of solutions, First eigenvalue.

\medskip 

\noindent \textbf{MSC(2020)}: 35B65; 49J30; 49J35.

\section{Introduction}
In this article we study an optimal partition problem with volume and inclusion constraints, for a cost functional depending on the first Dirichlet eigenvalue of the Laplacian. Let $\Omega $ be a bounded connected open set of $\mathbb{R}^N$,  for $N\geq2$. For an integer $k\geq 2$ and $0<a<|\Omega|$, we consider the multiphase shape optimization problem
\begin{equation}\label{eigenvalue_problem}
\inf \left\{\sum_{i=1}^k \lambda_1(\omega_i)\;\Big|\;
\begin{array}{c}
\omega_i \subset \Omega \mbox{ are nonempty open sets for all } i=1,\ldots, k, \vspace{0.05cm}\\
 \omega_i \cap \omega_j = \emptyset  \: \text{for all}\: i \not=j \mbox{ and } \sum_{i=1}^{k}|\omega_i| =  a\\
\end{array}
\right\},
\end{equation}
where $\lambda_1(\cdot)$ denotes the first  Dirichlet eigenvalue and $|\cdot|$ stands for the Lebesgue measure.  The main goal of this paper is to  prove the existence of an optimal open partition to \eqref{eigenvalue_problem}, showing also that the corresponding eigenfunctions are locally Lipschitz continuous (see Theorem \ref{thm:main} below). The proof uses a weak formulation that involves a minimization problem of a penalized functional where the variables are functions rather than domains.
\begin{figure}[!h]
\begin{center}
 \includegraphics[width= 5cm]{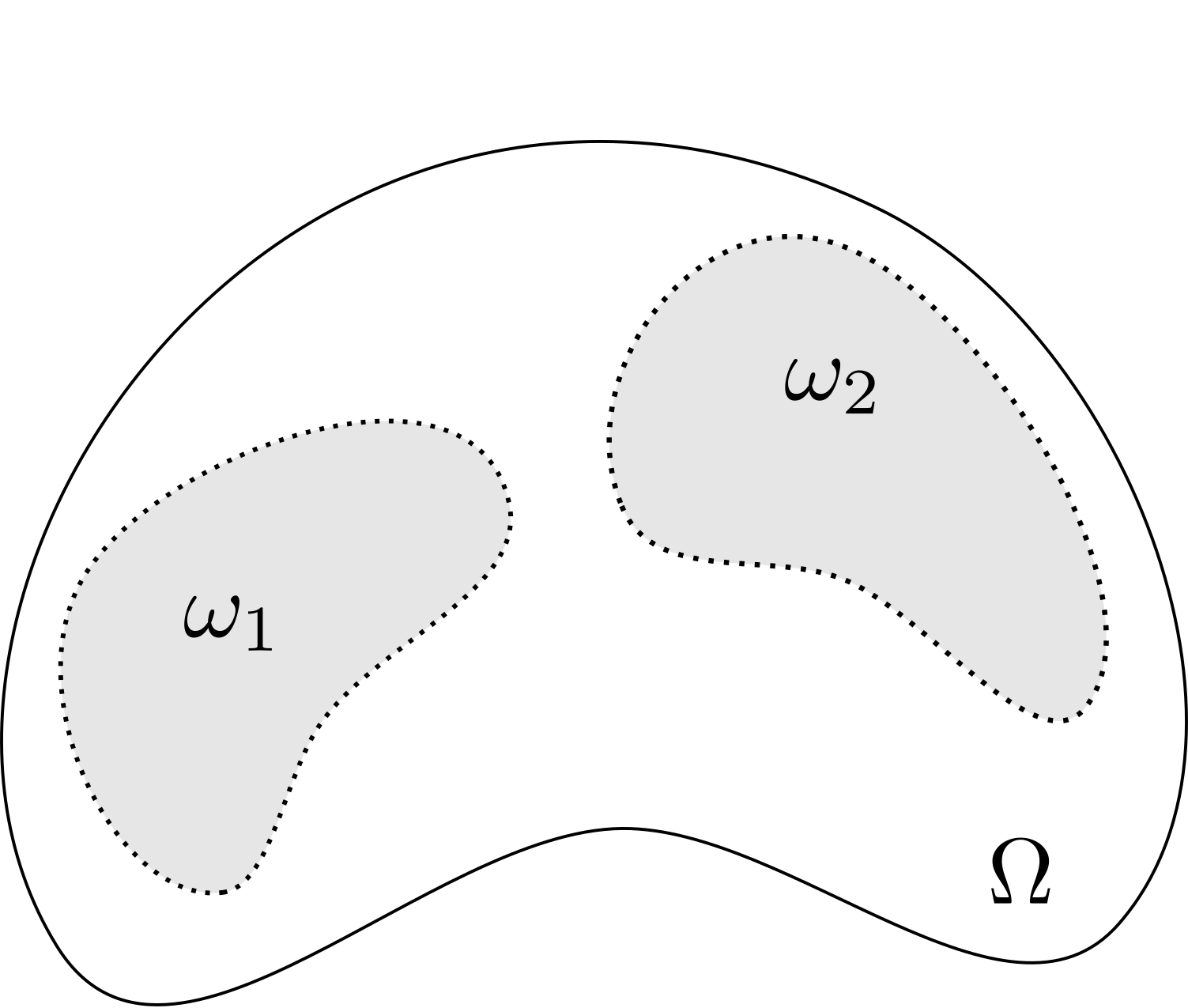}   
\end{center}
\caption{An admissible partition $(\omega_1,\omega_2)$ in $\mathcal{P}_a(\Omega)$ for a certain $0<a < |\Omega|$ and $k=2$. }
\end{figure} 

Minimizing a functional with measure constraints appears in electromagnetic casting processes \cite{Crouzeix1991,Descloux1994}. The study of these functionals is motivated by their applications to industry and has attracted many mathematicians, physicists, and engineers. In particular, minimization problems with volume constraint involving the Dirichlet eigenvalues for the Laplace operator have been extensively studied by many authors; we refer to \cite{BucurButtazzoHenrot, BMPV2015, BZ1995, ButtazzoDalMaso, BD1993} and the references therein, as well as the books \cite{BB2005, Henrot2006,  Henrot2017, HenrotPierre2005, Velichkov2015}. 

\smallbreak

Problem \eqref{eigenvalue_problem} with one phase corresponds to 
 \begin{equation}\label{first_problem}
  \inf \{ \lambda_1(\omega), \: \omega \subset \Omega,\: \omega \: \text{open}, \: |\omega|= a\}.
 \end{equation}
Let us split the discussion of such a problem between the case when $\Omega$ is a bounded domain and when $\Omega=\R^N$.

For problem \eqref{first_problem} with $\Omega$ a bounded domain, due to the lack of a suitable topology, the classical variational techniques are not appropriate to prove the existence and regularity of Dirichlet eigenvalues problems with measure constraints. One important notion that aids in dealing with this issue is the $\gamma$-convergence, introduced by G. Dal Maso and U. Mosco in \cite{DalMasoMosco1986, DalMasoMosco1987}. This type of convergence allowed G. Buttazzo and G. Dal Maso in \cite[Example 2.6]{BD1993} to produce the first and classical existence result (actually, the authors prove it in a very general situation which includes the minimization of $\lambda_k$). The authors prove that there exists a minimizer to \eqref{first_problem} in the class of quasi-open sets. Such a class of sets is, in fact, the largest family for which the Dirichlet eigenvalues of the Laplacian problem is still well-posed and inherits a strong maximum principle. For more details on quasi-open sets, see \cite{BB2005}. 

To obtain \emph{open} optimal sets is more challenging than having quasi-open ones (actually, there are even situations where an open solution does not exist; see, for instance, \cite[Theorem 3.11]{Hayouni1999}).  Then, the fundamental question is to understand whether and when a solution has additional regularity properties. In \cite{BHP2005}, T. Brian\c con, M. Hayouni and M. Pierre  prove the existence of an open solution $\omega$ to \eqref{first_problem} when $\Omega$ is bounded, establishing at the same time locally Lipschitz regularity for the corresponding eigenfunctions. An important part of the strategy in \cite{BHP2005} is to take a solution $\bar u$ to the problem
\[
\inf \left\{\int_\Omega |\nabla u|^2:\ u\in H^1_0(\Omega),\ \int_\Omega u^2=1,\ |\{u\neq 0\}|\leq a\right\}
\]
and show that it is also a minimizer of the penalized functional
\begin{equation}\label{eq:penalizedfunctional}
J(u)=\int_\Omega |\nabla u|^2+\lambda_a \left(1-\int_\Omega u^2\right)^+ + m (|\{u\neq 0\}|-a)^+,\quad \text{ where } \lambda_a:=\int_\Omega |\nabla \bar u|^2,
\end{equation}
for $m$ sufficiently large. In the end, the authors show the equivalence between these two problems, and
the equivalence between \eqref{first_problem} and  
\begin{equation}\label{first_problem2}
  \min \{ \lambda_1(\omega)+m(|\omega|-a)^+ ,\ \omega\subset \Omega \text{ open}  \}
 \end{equation}
for large $m>0$. Then, T. Brian\c con and J. Lamboley, in \cite{BrianconLamboley2009}, prove that any open solution $\omega$ has a locally finite perimeter and that, up to a negligible set, $\partial \omega\cap \Omega$ is analytic. Finer results about the singular set are shown in \cite{MazzoleniTerraciniVelichkov2017} (which deals with a more general vectorial case).

\begin{Remark}
We observe that \cite{BHP2005} also deals with Dirichlet energies related to problems of type $-\Delta u=f$, $u\in H^1_0(\omega)$. The regularity of the free boundary for such optimization problems is addressed in \cite{Briancon2004} by T. Brian\c con. When the operator is of divergence type, M. Hayouni (see \cite{Hayouni1999}) shows the existence and Lipschitz continuity under the assumption that the state function is positive by increasing the admissible set and regularizing the volume constraint, respectively. We observe that, for a problem like \eqref{first_problem} with eigenvalues of a divergence-type operator, E. Teixeira and S. Snelson obtain, in \cite{Teixeira-Snelson2022}, H\" older regularity of the eigenfunctions; they prove this when the diffusion coefficient is close, in a suitable sense, to the identity. The authors in \cite{Hayouni1999, Teixeira-Snelson2022} use different approaches. Finally, E. Russ, B. Trey and B. Velichkov in \cite{RussTreyVelichkov2019} perform a complete study of problem \eqref{first_problem} for eigenvalues of elliptic operators with drift.
\end{Remark}

The study of the one phase problem \eqref{first_problem} with $\Omega=\R^N$ corresponds to the problem appearing in the 19th century in the monograph \cite{Rayleigh1945}. By the Faber-Krahn inequality \cite{Faber1923, Krahn1925}, it is classical to show that the ball of volume $a$ is the minimizer of \eqref{first_problem}.  For the minimization problem of the second eigenvalue, it is known that the solution is a union of two disjoint balls with equal measure, by the Hong-Krahn-Szeg\"o inequality \cite{Hong,Krahn1926}. Using different strategies, D. Bucur in \cite{Bucur2012} and, more or less simultaneously, D. Mazzoleni and A. Pratelli in \cite{Mazzoleni-Pratelli2013} obtain a general existence result in the whole space $\mathbb{R}^N$ for the minimization of the $k$-th eigenvalue with a prescribed measure in the class of quasi-open sets.  For more details on the spectral problem see \cite[Chapters 2 \& 3]{Henrot2017}.

\smallbreak

Concerning now the multiphase case \eqref{eigenvalue_problem}, our work is, up to our knowledge, the first to treat the case when $\Omega$ is a bounded set. There are, however, related problems, which we now describe. When $\Omega=\R^N$ it is easy to check, again by Faber-Krahn inequality, that the solution is a union of $k$ disjoint balls (see for instance the proof of Theorem \ref{th-bolas} below). If, instead, one is minimizing $L_\ell(\omega_1,\ldots,\omega_k)=\sum_{i=1}^k \lambda_\ell(\omega_i)$, for $\ell \geq 3$ then, up to our knowledge, nothing is known (for $\ell=2$, the solution is a union of $2k$ disjoint balls).

Observe that, by a scaling argument (reasoning, for instance, as in \cite[Proposition 6.3]{Velichkov2015}), such problems are equivalent to
\begin{equation}\label{BV_problem2}
  \min \left\{ \sum_{i= 1}^{k} \lambda_{\ell}(\omega_i) + m |\omega_i|\: : \: \omega_i \subset \R^N, \: \omega_i  \: \text{open},\: \omega_i\cap \omega_j = \emptyset \text{ for $i\neq j$} \right\}, 
\end{equation}
for some $m>0$.  A similar problem but with partitions contained in a bounded domain $\Omega$ is studied in
 \cite{BucurVelichkov2014} by D. Bucur and B. Velichkov. More precisely, the authors treat the following minimization problem
\begin{equation}\label{BV_problem}
  \min \left\{ \sum_{i= 1}^{k} \lambda_{\ell}(\omega_i) + m |\omega_i|\: : \: \omega_i \subset \Omega, \: \omega_i  \: \text{quasi-open},\: \omega_i\cap \omega_j = \emptyset \text{ for $i\neq j$} \right\}, 
\end{equation}
where $m>0$, $\Omega$ is a given bounded open set. Notice that, since $m >0$, the solution will be a  partition of $\Omega$ with an empty region because, in general, the sets $ \omega_i$ will not cover $\Omega$.  Hence, the geometry of the partitions in \cite{BucurVelichkov2014} is similar to the geometry of the partitions in $\mathcal{P}_a(\Omega)$. The authors prove qualitative properties for an optimal partition, such as inner density estimates, finite perimeter and absence of triple points,among others. For $\ell=1,2$, they also show the existence of open minimizers. A complete study of the regularity of the free boundary of optimal sets is obtained for $\ell=1$ in \cite[Corollary 1.3]{DePhilippisSpolaorVelichkov2021}. For results in the special case $N=2$ and numerical simulations, see \cite{BogoselVelichkov2016}.

We emphasize that, when $\Omega$ is bounded, problems \eqref{eigenvalue_problem} and \eqref{BV_problem} for $\ell=1$ are not equivalent (scaling arguments no longer work), and it seems that some deformation arguments used in \eqref{BV_problem} do not provide directly useful information for
 \eqref{eigenvalue_problem}, due to the positive part and $a>0$ in our measure constraint. 
  \smallbreak

Observe that, when $a=|\Omega|$, problem \eqref{first_problem} becomes a spectral partition problem without volume constraint, namely 
\begin{equation}\label{OPPclassical}
 \inf \left\{\sum_{i=1}^k \lambda_1 (\omega_i)\;\Big|\;
\begin{array}{c}
\omega_i \subset \Omega \mbox{ are nonempty open sets for all } i, \vspace{0.05cm}\\
 \omega_i \cap \omega_j = \emptyset  \:  \text{for all}\: i \not=j \: \mbox{ and } \: \overline\Omega = \cup_{i= 1}^k \overline{\omega_i}\\
\end{array}
\right\}.
\end{equation}
Combining the results from \cite{CaffarelliLin2007,ContiTerraciniVerziniOPP} (see also \cite[Section 8]{TavaresTerracini1}), it is known that optimal partitions exists, and the free boundary $\cup_i \partial \omega_i$ is, up to a singular set of lower dimension, regular. Finer results for the singular set are proved in the recent paper \cite{Alper}, namely that the $(N-2)$--Hausdorff dimension of the singular set is finite, together with a stratification result. The case of higher eigenvalues has been addressed in \cite{RamosTavaresTerracini} (see also references therein).

\smallbreak

To conclude this literature review, we refer the papers  \cite{STTZ2018, STZ2021}, where the authors consider the same cost functional as \eqref{eigenvalue_problem}, with a distance constraint between elements of each partition (namely $\dist(\omega_i, \omega_j) \geq  r$ for every $i\neq j$) instead of a measure constraint.

\subsection{ Statement of the main results and structure of the paper}

As already mentioned, the main goal of this paper is to prove the existence of (open) minimizers to \eqref{eigenvalue_problem}, together with the local Lipschitz continuity of the corresponding eigenfunctions. For that, it is convenient to relax the measure constraint, dealing with 
\begin{equation}\label{eigenvalue_problem2}
c_a =\inf_{(\omega_1, \ldots, \omega_k) \in {\mathcal P}_a(\Omega)} \sum_{i=1}^k \lambda_1(\omega_i),
\end{equation}
where \begin{equation*}
{\mathcal P}_a(\Omega):= \left\{(\omega_1, \ldots, \omega_k)\;\Big|\;
\begin{array}{c}
\omega_i \subset \Omega \mbox{ are nonempty open sets for all } i, \vspace{0.05cm}\\
 \omega_i \cap \omega_j = \emptyset  \: \text{for all}\: i \not=j \mbox{ and } \sum_{i=1}^{k}|\omega_i| \leq  a\\
\end{array}
\right\}.
\end{equation*}
We do not pass through the notion of quasi-open sets; instead,  one important feature to study the existence and regularity of the solutions in this scenario is the equivalence between optimal partition problems and minimization problems involving a state functional.  We introduce a weak formulation that involves a cost functional, where the variables are functions rather than domains, namely 
\begin{equation}\label{main_func}
\tilde c_a = \inf_{(u_1, \ldots, u_k) \in H_a} J(u_1, \ldots, u_k),
\end{equation}
where
\begin{equation*}
J(u_1, \ldots, u_k) :=\sum_{i=1}^k \int_{\Omega} |\nabla u_i|^2
\end{equation*}
and
\begin{equation*}
H_a : = \Big\{(u_1, \ldots,  u_k)  \: \Big|\:  
  u_i \in H^1_0(\Omega) \mbox{ and }\int_{\Omega} u_i^2 = 1 \:\: \mbox{for every $i$}, \\
  \int_{\Omega} u_i^2 u_j^2 =0\  \mbox{ for $i\neq j$},\ \  \sum_{i =1}^k|\Omega_{u_i}|\leq  a 
\Big \},
\end{equation*}
with $\Omega_{u_i} := \{x \in \Omega \: |\: u_i(x)\neq 0\}$ for all $i \in \{  1, \ldots, k\}$.

Our main result is the following.
\begin{Theorem}\label{thm:main} 
The problem \eqref{eigenvalue_problem2} admits a solution. Moreover:
\begin{enumerate}[i)]
\item Given any optimal partition $(\omega_1,\ldots, \omega_k)\in \mathcal{P}_a(\Omega)$, then each $\omega_i$ is connected and $\sum_{i=1}^k |\omega_i|=a$. Therefore, the problems \eqref{eigenvalue_problem} and \eqref{eigenvalue_problem2} have the same solutions.  In addition, if $u_i$ is a first eigenfunction associated with $\omega_i$, then $u_i$ is locally Lipschitz continuous in $\Omega$.
\item Problems \eqref{eigenvalue_problem2} and \eqref{main_func} are equivalent in the following sense:\begin{enumerate}[a)]
\item  $c_a=\tilde c_a$; 
\item if $(u_1,\ldots, u_k)\in H_a$ is an optimal solution of \eqref{main_func} and $\Omega_{u_i}:=\{u_i\neq 0\}$, then  $(\Omega_{u_1},\ldots, \Omega_{u_k})\in {\mathcal P}_a(\Omega)$ solves \eqref{eigenvalue_problem2}; 
\item  if $(\omega_1,\ldots, \omega_k)\in \mathcal{P}_a(\Omega)$ is an optimal partition for \eqref{eigenvalue_problem2} and $u_i$ is a first eigenfunction associated to the set $\omega_i$, then $(u_1,\ldots, u_k)\in H_a$ is a minimizer for \eqref{main_func}.
\end{enumerate}
\end{enumerate}
\end{Theorem}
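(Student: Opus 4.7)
The plan is to prove Theorem \ref{thm:main} in three phases: first, existence of a minimizer for the weak formulation \eqref{main_func}; then, local Lipschitz regularity of its optimizers via a penalized functional, deformations, blow-up, and a monotonicity formula; finally, reconstruction of an open partition from these optimizers together with the qualitative properties in (i) and the equivalence (ii) with \eqref{eigenvalue_problem2}. For existence, take a minimizing sequence $(u_1^n,\ldots,u_k^n)\in H_a$. Since $\int_\Omega (u_i^n)^2=1$ for each $i$ and $\sum_i \int_\Omega|\nabla u_i^n|^2$ is bounded, the sequence is bounded in $H^1_0(\Omega)^k$; extract a subsequence with $u_i^n\rightharpoonup u_i$ in $H^1_0$, $u_i^n\to u_i$ in $L^2$ and a.e. The $L^2$-normalization passes by strong $L^2$ convergence, $\int_\Omega u_i^2 u_j^2=0$ follows from a.e.\ convergence and Fatou applied to the non-negative integrand (using $\int_\Omega (u_i^n)^2(u_j^n)^2=0$ for every $n$), and the measure constraint is preserved because $\chi_{\{u_i\neq 0\}}\leq\liminf_n \chi_{\{u_i^n\neq 0\}}$ a.e., so Fatou gives $\sum_i|\{u_i\neq 0\}|\leq a$. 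Weak lower semicontinuity of the Dirichlet energy concludes existence, while plugging normalized first eigenfunctions of an almost-optimal partition in $\mathcal{P}_a(\Omega)$ into $J$ yields $\tilde c_a\leq c_a$.

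\textbf{Penalization and Lipschitz regularity.} Adapting the approach of \cite{BHP2005} to the multi-phase setting, introduce for $\Lambda,m>0$ a penalized functional of the form
\[
J_{\Lambda,m}(v_1,\ldots,v_k)=\sum_{i=1}^k\int_\Omega|\nabla v_i|^2+\sum_{i=1}^k\lambda_i\Bigl(1-\int_\Omega v_i^2\Bigr)^+ +\Lambda\!\!\sum_{i\neq j}\int_\Omega v_i^2 v_j^2+m\Bigl(\sum_{i=1}^k|\Omega_{v_i}|-a\Bigr)^+,
\]
minimized over $H^1_0(\Omega)^k$ with no hard constraints, where $\lambda_i:=\int_\Omega|\nabla u_i|^2$ for a fixed minimizer $(u_1,\ldots,u_k)$ of \eqref{main_func}. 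Through competitor constructions involving truncation, rescaling, and exchange of mass between phases, show that for $\Lambda$ and $m$ large enough any minimizer of \eqref{main_func} is a minimizer of $J_{\Lambda,m}$, and vice versa. Each component $u_i$ then becomes a quasi-minimizer of a one-phase Alt-Caffarelli-type free-boundary problem, modulo lower-order interaction terms coming from the $\Lambda$-term. Local Lipschitz continuity follows from a blow-up/non-degeneracy alternative combined with a multi-phase Alt-Caffarelli-Friedman-type monotonicity formula controlling the simultaneous growth of distinct phases at interface points, in the spirit of \cite{CaffarelliLin2007,ContiTerraciniVerziniOPP}.

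\textbf{From eigenfunctions to an open partition and qualitative properties.} Continuity of each $u_i$ makes $\omega_i:=\{u_i\neq 0\}$ open, and the orthogonality condition forces $\omega_i\cap\omega_j=\emptyset$. The Euler--Lagrange equation for $J_{\Lambda,m}$ yields $-\Delta u_i=\lambda_i u_i$ on $\omega_i$; testing against $u_i$ gives $\lambda_i=\int_\Omega|\nabla u_i|^2$, and since $u_i\in H^1_0(\omega_i)$ is a Dirichlet eigenfunction, $\lambda_1(\omega_i)\leq\lambda_i$. Summing, $c_a\leq\sum_i\lambda_1(\omega_i)\leq\sum_i\lambda_i=\tilde c_a$; combined with the reverse inequality above, $c_a=\tilde c_a$ and $\lambda_1(\omega_i)=\lambda_i$ in every phase, proving (ii)(a)--(b). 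For (ii)(c) and the Lipschitz claim in (i), start from any optimal $(\omega_i)\in\mathcal{P}_a(\Omega)$ and plug its normalized first eigenfunctions into $J$; they minimize \eqref{main_func} and hence inherit the Lipschitz regularity just proved. Saturation and connectedness follow from admissible variations: if $\sum_i|\omega_i|<a$, enlarging any $\omega_i$ by a small open neighborhood strictly decreases $\lambda_1(\omega_i)$; if some $\omega_i$ splits into disjoint open sets $A$ and $B$, then $\lambda_1(\omega_i)=\min(\lambda_1(A),\lambda_1(B))$, and discarding the higher-eigenvalue piece while allocating its volume to expand some other $\omega_j$ strictly lowers the total cost.

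\textbf{Principal obstacle.} The bulk of the work is the Lipschitz regularity step. The measure penalty $m\bigl(\sum_i|\Omega_{u_i}|-a\bigr)^+$ is \emph{global}, coupling all phases at once, while the orthogonality penalty only controls interaction in an $L^2$-averaged sense, so the free boundary of each $u_i$ and the free interface between distinct phases must be analyzed jointly. Setting up the correct blow-up sequences and an appropriate multi-phase Alt-Caffarelli-Friedman monotonicity formula in the presence of the global volume term is the technical core of the paper.
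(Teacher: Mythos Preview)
Your existence argument for $\tilde c_a$ matches the paper's (Proposition~\ref{thm:condition_C1}), and so does the logic of the equivalence $c_a=\tilde c_a$ once continuity of minimizers is in hand (Proposition~\ref{prop:equivalence_levels}). The differences, and the genuine gaps, lie in the penalization/regularity step and in the qualitative properties.

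\textbf{Penalization.} The paper does \emph{not} relax the segregation constraint. Its penalized functional is
\[
J_\mu(u_1,\ldots,u_k)=\sum_{i=1}^k \frac{\int_\Omega|\nabla u_i|^2}{\int_\Omega u_i^2}+\mu\Bigl[\sum_{i=1}^k|\Omega_{u_i}|-a\Bigr]^+,
\]
minimized over $\overline H=\{(u_i)\in H^1_0(\Omega)^k: u_i\not\equiv 0,\ u_i\cdot u_j\equiv 0\}$, keeping the hard constraint $u_iu_j\equiv 0$; only the volume constraint is penalized, and the $L^2$-normalization is absorbed into the Rayleigh quotient. Equivalence with $J$ on $H_a$ for $\mu$ large is a truncation/coarea argument (Proposition~\ref{prop_pen}). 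By contrast, your term $\Lambda\sum_{i\neq j}\int v_i^2v_j^2$ relaxes segregation, and the claim that minimizers of $J_{\Lambda,m}$ coincide with minimizers of $J$ on $H_a$ for some finite $\Lambda$ is not justified and in general false: for finite $\Lambda$ the minimizers of $J_{\Lambda,m}$ typically overlap, segregation is only recovered in the limit $\Lambda\to\infty$, and the components are therefore not Alt--Caffarelli quasi-minimizers in any useful sense. The paper instead extracts information directly through the Conti--Terracini--Verzini deformations (Lemmas~\ref{def-simples}--\ref{def-complexa}), obtaining the key differential inequality with error terms of Proposition~\ref{prop_03}; continuity and then Lipschitz regularity follow from a blow-up argument combined with properties of the class $\mathcal S_{\lambda_1,\ldots,\lambda_k}$ and the Caffarelli--Jerison--Kenig monotonicity formula (Sections~\ref{sec:regularity}--\ref{secLip}).

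\textbf{Saturation and connectedness.} Your argument ``if $\sum_i|\omega_i|<a$, enlarge some $\omega_i$ by a small open neighborhood'' has a gap: nothing prevents $\partial\omega_i\cap\Omega\subset\bigcup_{j\neq i}\overline{\omega_j}$, so any enlargement of $\omega_i$ inside $\Omega$ may violate disjointness. The paper's proof (Lemma~\ref{lemma:aux1}) is different: assuming $\sum_i|\omega_i|<a$, the deformations of Lemma~\ref{def-complexa} become admissible in every small ball, which forces $(u_1,\ldots,u_k)\in\mathcal S_{\lambda_{u_1},\ldots,\lambda_{u_k}}(\Omega)$; by Lemma~\ref{lemma:classeS} the common zero set then has Lebesgue measure zero, whence $\sum_i|\omega_i|=|\Omega|>a$, a contradiction. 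For connectedness (Lemma~\ref{lemma:aux2}) the paper simply discards all but one connected component of a disconnected $\omega_i$; this yields another optimal partition with total measure strictly less than $a$, contradicting saturation---no ``reallocation of volume to expand another $\omega_j$'' is needed, and your version of that step inherits the same admissibility gap as above.
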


We prove the existence of an optimal partition by exploiting the equivalence between the problems \eqref{eigenvalue_problem2} and \eqref{main_func}, which plays a crucial role in overcoming technical difficulties to treat \eqref{eigenvalue_problem2} directly. For instance, by using the direct method of calculus of variations to \eqref{main_func}, we can easily prove the existence of minimizers. Since we are not working with the concept of quasi-open sets, we notice that the continuity of minimizers is a fundamental hypothesis in proving such equivalence. This is the content of Proposition \ref{prop:equivalence_levels} and Proposition \ref{thm:condition_C1}. 

In order to prove the continuity of minimizers, we adapt the techniques presented in \cite{BHP2005}.  However, several difficulties appear due to the fact we are dealing with partitions instead of only one set. Firstly, the generalization of \eqref{eq:penalizedfunctional} that works in our scenario is:

\begin{equation*}
J_{\mu} (u_1,\ldots, u_k):= \sum_{i=1}^k \frac{\displaystyle \int_{\Omega}|\nabla u_i|^2}{\displaystyle \int_{\Omega} u_i^2}  + \mu \left[ \sum_{i=1}^k |{\Omega}_{u_i}| - a\right ]^{+}, \qquad \text{for} \ \ (u_1\ldots, u_k) \in \overline{H},
\end{equation*}
where
\begin{equation*}
\overline{H} : = \Big\{(u_1,\ldots,  u_k) \in H^1_0(\Omega;\R^k)\:  \Big|\: \quad  u_i \neq 0\ \forall i,\   u_i \cdot u_j \equiv 0\ \ \forall i\neq j \Big \}.
\end{equation*}

To extract information from this new penalized energy, we rely on some deformation arguments from \cite{ContiTerraciniVerziniAsymptotic, ContiTerraciniVerziniOPP, ContiTerraciniVerziniVariation} (see Appendix \ref{appendix}). These were introduced for the study of the spectral partition \emph{without} volume constraints, like \eqref{OPPclassical} above. In the context of problem \eqref{OPPclassical}, these deformations provide that any solution should satisfy a set of inequalities (namely, they should belong to the class $\mathcal{S}_{\lambda_1, \ldots, \lambda_k}$, see \eqref{S-class} below). Due to the presence of an empty region (related to the fact that $a<|\Omega|$), in our context, we obtain more complex inequalities, see Proposition \ref{prop_03} below, which is the key result in our paper.

Throughout Section \ref{secLip}, where we prove Lipschitz continuity, we use the continuity of the minimizers proved in Section \ref{sec:regularity}, namely in the proof of Proposition \ref{prop:m2}. 
For the proof of Lipschitz continuity, we were not able to apply directly the ideas in \cite{BHP2005} to our framework. In our case, we proceed as in \cite{ContiTerraciniVerziniAsymptotic, ContiTerraciniVerziniOPP, ContiTerraciniVerziniVariation} by using powerful tools such as blow-up methods, the Caffarelli-Jerison-Kenig monotonicity formula, suitable inequalities obtained via deformations (Proposition \ref{prop_03}), and some properties of the class $\mathcal{S}_{\lambda_1, \ldots, \lambda_k}$ mentioned before.

\smallbreak

In the next result, we characterize the minimizers of \eqref{eigenvalue_problem}, in the case we have enough space inside $\Omega$.

\begin{Theorem}\label{th-bolas}
There exists $\bar a=\bar a(\Omega,N,k)$ such that, for $a<\bar a$, then any solution of \eqref{eigenvalue_problem} is a partition made of $k$ disjoint open balls, all with the same radius.\end{Theorem}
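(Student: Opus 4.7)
The plan is to combine the Faber–Krahn inequality with the strict convexity of $t\mapsto t^{-2/N}$, and to exhibit a competitor realizing the resulting lower bound. The guiding observation is that the analogue of \eqref{eigenvalue_problem} on all of $\mathbb{R}^N$ is minimized by $k$ disjoint balls of equal volume $a/k$; once $a$ is small enough that such a configuration fits inside $\Omega$, the constrained minimum coincides with the unconstrained one, and the rigidity cases of Faber–Krahn and Jensen pin down the optimal shape.

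First, I would define $\bar a$ from the geometry of $\Omega$: choose $k$ distinct points $x_1,\dots,x_k\in\Omega$ and $r_0>0$ so small that the balls $B(x_i,r_0)$ are pairwise disjoint and all contained in $\Omega$, and set $\bar a:=k\,|B_1|\,r_0^N$, where $B_1$ denotes the Euclidean unit ball in $\mathbb{R}^N$. For $a<\bar a$, the radius $r:=(a/(k|B_1|))^{1/N}$ satisfies $r<r_0$, hence $(B(x_1,r),\dots,B(x_k,r))\in\mathcal{P}_a(\Omega)$, and this competitor yields
$$\sum_{i=1}^k \lambda_1(B(x_i,r)) \;=\; k\,\lambda_1(B_1)\,r^{-2} \;=\; \lambda_1(B_1)\,|B_1|^{2/N}\,k^{1+2/N}\,a^{-2/N}.$$

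For the matching lower bound, I would take any $(\omega_1,\dots,\omega_k)\in\mathcal{P}_a(\Omega)$, set $a_i:=|\omega_i|$ with $\sum_i a_i=a$, apply Faber–Krahn componentwise to obtain $\lambda_1(\omega_i)\geq \lambda_1(B_1)\,|B_1|^{2/N}\,a_i^{-2/N}$, and then Jensen's inequality for the strictly convex function $t\mapsto t^{-2/N}$ on $(0,\infty)$ to get $\sum_i a_i^{-2/N}\geq k^{1+2/N}\,a^{-2/N}$, with equality iff all $a_i=a/k$. The two estimates match, so the value of \eqref{eigenvalue_problem} is exactly $\lambda_1(B_1)\,|B_1|^{2/N}\,k^{1+2/N}\,a^{-2/N}$.

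Finally, any optimal partition must saturate both inequalities: the equality case of Jensen forces $a_i=a/k$ for every $i$, while the equality case of Faber–Krahn — together with the connectedness of each $\omega_i$ from Theorem \ref{thm:main}(i) — forces each $\omega_i$ to be an open ball of radius $r=(a/(k|B_1|))^{1/N}$. I expect no serious obstacle here: the construction of $\bar a$ and the two matching bounds are essentially by inspection, and the only mildly delicate point is the rigidity of Faber–Krahn for open connected sets, which is classical (via the equality case of the Pólya–Szegő rearrangement inequality applied to the first eigenfunction).
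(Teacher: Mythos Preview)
Your proof is correct and follows essentially the same strategy as the paper: Faber--Krahn to reduce to balls, followed by a convexity argument to force equal radii. The paper phrases the second step via Lagrange multipliers on $(r_1,\dots,r_k)\mapsto \sum r_i^{-2}$ under $\sum r_i^N=\text{const}$, whereas you use Jensen's inequality for $t\mapsto t^{-2/N}$; these are equivalent. Your treatment of rigidity is slightly more direct than the paper's, since you compare any admissible partition straight to the explicit lower bound rather than first building an intermediate ball partition $(B_{r_1},\dots,B_{r_k})$ that also has to be placed inside $\Omega$; and your explicit appeal to the connectedness in Theorem~\ref{thm:main}(i) for the Faber--Krahn equality case is a point the paper leaves implicit.
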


Finally, for the case with $k=2$, we prove the existence of an optimal partition that inherits some symmetry from the box $\Omega$.

\begin{Theorem}\label{th-simetria}
Consider \eqref{eigenvalue_problem} with $k=2$ and suppose $\Omega \ni 0$ is axially with respect to a unit vector $e$. Then there exists an optimal partition $(\omega_1, \omega_2)$ for \eqref{eigenvalue_problem}, with corresponding nonnegative eigenfunctions $u_1, u_2$, such that:
\begin{enumerate}[a)]
\item  $\omega_1$ and $\omega_2$ are axially symmetric with respect to $e$;
\item $u_1$ and $u_2$ are foliated Schwarz symmetric with respect to $e$ and $-e$, respectively.
\end{enumerate}
\end{Theorem}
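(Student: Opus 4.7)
The plan is to start from any optimal partition provided by Theorem~\ref{thm:main} and apply a two-phase foliated Schwarz rearrangement to produce a new minimizer with the prescribed symmetries. I would first take any optimal $(\omega_1,\omega_2)\in\mathcal{P}_a(\Omega)$ for \eqref{eigenvalue_problem2} with nonnegative first eigenfunctions $u_1,u_2\in H^1_0(\Omega)$; by Theorem~\ref{thm:main}(ii)(c) the pair $(u_1,u_2)$ minimizes \eqref{main_func}.

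The next step is to define $u_1^*$ as the foliated Schwarz symmetrization of $u_1$ with respect to $+e$ carried out inside $\Omega$: on each spherical slice $\partial B_r(0)\cap\Omega$, which is axially symmetric about the $e$-axis because $\Omega$ is, I replace the super-level sets $\{u_1>t\}\cap\partial B_r$ by axially symmetric subsets of $\partial B_r\cap\Omega$ of the same spherical measure, chosen to be the ``northernmost'' portion (latitudinally closest to $+e$). Analogously, $u_2^*$ is the foliated Schwarz symmetrization of $u_2$ with respect to $-e$. Each $u_i^*$ is axially symmetric about $e$, lies in $H^1_0(\Omega)$ since the rearrangement is performed inside $\Omega$, and by equimeasurability satisfies $\int_\Omega (u_i^*)^2=\int_\Omega u_i^2=1$ and $|\Omega_{u_i^*}|=|\Omega_{u_i}|$. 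Applying Fubini to $u_1u_2\equiv 0$, for a.e.\ $r>0$ one has
\[
|\{u_1\neq 0\}\cap\partial B_r|+|\{u_2\neq 0\}\cap\partial B_r|\le|\partial B_r\cap\Omega|,
\]
and after rearrangement $\{u_1^*>0\}\cap\partial B_r$ and $\{u_2^*>0\}\cap\partial B_r$ occupy, respectively, the northernmost and southernmost portions of $\partial B_r\cap\Omega$ with the same measures. The displayed inequality forces these regions to be disjoint, yielding $u_1^*u_2^*\equiv 0$, so $(u_1^*,u_2^*)\in H_a$.

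I would then invoke the Polya--Szego inequality for foliated Schwarz symmetrization on each sphere $\partial B_r\cap\Omega$ and integrate in $r$ to deduce $\int_\Omega|\nabla u_i^*|^2\le\int_\Omega|\nabla u_i|^2$, hence $J(u_1^*,u_2^*)\le J(u_1,u_2)=\tilde c_a$. Therefore $(u_1^*,u_2^*)$ also minimizes \eqref{main_func}; Theorem~\ref{thm:main}(ii)(b) promotes it to an optimal open partition $(\omega_1^*,\omega_2^*):=(\Omega_{u_1^*},\Omega_{u_2^*})$ for \eqref{eigenvalue_problem2}, while the Rayleigh--Ritz characterization of $\lambda_1(\omega_i^*)$ identifies each $u_i^*$ as a first eigenfunction on $\omega_i^*$. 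The open sets $\omega_i^*$ inherit the axial symmetry about $e$ from the $u_i^*$, giving (a), and (b) is immediate from the construction.

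The main obstacle I foresee is the careful implementation of the two-phase foliated Schwarz rearrangement and the corresponding Polya--Szego inequality in the relative setting of a general axially symmetric $\Omega$, whose spherical slices $\partial B_r\cap\Omega$ may be disconnected unions of latitudinal bands rather than single caps or full spheres; the two-phase disjointness then has to be tracked band by band using the measure inequality above. Once this classical but technically delicate machinery is fixed, the rest of the argument follows cleanly from Theorem~\ref{thm:main}.
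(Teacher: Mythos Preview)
Your proposal is correct and follows essentially the same strategy as the paper: apply cap/foliated Schwarz symmetrization in the opposite directions $+e$ and $-e$ to the two phases, use the corresponding eigenvalue/P\'olya--Szeg\H{o} inequality, and conclude that the symmetrized pair is again optimal. The paper's proof is terser in that it symmetrizes the sets $\omega_i$ directly (rather than the eigenfunctions) and simply cites \cite[Section~7.5]{BaernsteinBook} for the inequality $\lambda_1(\omega_i^*)\le\lambda_1(\omega_i)$ under cap symmetrization, thereby bypassing both the detour through the weak formulation and the technical concern you raise about slices $\partial B_r\cap\Omega$.
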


\smallbreak

This paper is structured as follows: Section \ref{sec_not} is devoted to the equivalence between the minimization problems  \eqref{eigenvalue_problem2} and \eqref{main_func}. Also, we prove the existence of the minimizers to \eqref{eigenvalue_problem2} and \eqref{main_func} and define a penalized functional. In Section \ref{sec:regularity}, we introduce some properties for the  class  $\mathcal{S}_{\lambda_1, \ldots, \lambda_k}$ and show that the minimizers of \eqref{main_func} are bounded and continuous functions. Section \ref{secLip} is dedicated to establishing the Lipschitz continuity for the corresponding eigenfunctions. In Section \ref{Sec_proof}, we introduce the proofs of the main results. In Section \ref{appendix}, we present some properties for some classes of deformations and gather some auxiliary results that are used in the manuscript.   

To conclude, we point out that our strategy based on variations is flexible and can be applied in other contexts; a work regarding the study of \eqref{eigenvalue_problem} with eigenvalues associated to divergence type operators is currently in preparation.

\section{Preliminaries and existence of minimizers for the weak formulation}\label{sec_not}

In this part, we show the existence of minimizer to \eqref{main_func} and introduce a key step for the proof of Theorem \ref{thm:main}, namely the equivalence of problem \eqref{main_func} with a penalized version $J_{\mu}$ defined below. We start by showing the equivalence (assuming that the minimizers are continuous functions) between problems \eqref{eigenvalue_problem2} and \eqref{main_func}. It is worth highlighting that throughout this paper, for an open set $\omega \subset \Omega$ and a function $u:\omega \to \mathbb{R}$, we also denote by $u$ its extension to $\Omega$ as being zero outside $\omega$.

\begin{Proposition}\label{prop:equivalence_levels}
It holds that $\tilde c_a\leq c_a$. Moreover, if there exists $(\bar{u}_1, \ldots, \bar{u}_k) \in {H}_a$ a minimizer to \eqref{main_func} and $\bar{u}_1, \ldots, \bar{u}_k$ are continuous in $\Omega$, then $(\Omega_{{\bar u}_1}, \ldots, \Omega_{{\bar u}_k}) \in \mathcal{P}_a(\Omega)$ is a minimizer of the functional in \eqref{eigenvalue_problem2}, each $\bar{u}_i$ is a first Dirichlet eigenfunction in $\Omega_{\bar{u}_i}$ and $\tilde c_a=c_a$.
\end{Proposition}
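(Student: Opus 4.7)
The strategy is to prove the two inequalities $\tilde c_a\leq c_a$ and, under the continuity hypothesis, $c_a\leq \tilde c_a$ by passing back and forth between admissible open partitions and admissible $k$-tuples of functions; once the chain of inequalities collapses, the optimality statements and the eigenvalue identification drop out for free.

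For $\tilde c_a\leq c_a$, I would fix an arbitrary $(\omega_1,\dots,\omega_k)\in\mathcal P_a(\Omega)$ and, for each $i$, choose a nonnegative first Dirichlet eigenfunction $u_i$ on $\omega_i$ normalized by $\int_{\Omega}u_i^2=1$, extended by zero outside $\omega_i$. Then $u_i\in H^1_0(\Omega)$ and $\Omega_{u_i}\subset\omega_i$, so disjointness of the $\omega_i$'s gives $u_iu_j\equiv 0$ (in particular $\int_{\Omega} u_i^2u_j^2=0$) for $i\neq j$, and $\sum_i|\Omega_{u_i}|\leq\sum_i|\omega_i|\leq a$. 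Hence $(u_1,\dots,u_k)\in H_a$, and testing the eigenvalue equation against $u_i$ yields
\[
J(u_1,\dots,u_k)=\sum_i\int_\Omega|\nabla u_i|^2=\sum_i\lambda_1(\omega_i)\int_\Omega u_i^2=\sum_i\lambda_1(\omega_i),
\]
so taking the infimum over $\mathcal P_a(\Omega)$ gives $\tilde c_a\leq c_a$.

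For the reverse direction, I would take a continuous minimizer $(\bar u_1,\dots,\bar u_k)\in H_a$ and look at the candidate partition $(\Omega_{\bar u_1},\dots,\Omega_{\bar u_k})$. Continuity of $\bar u_i$ makes each $\Omega_{\bar u_i}$ open; the normalization $\int \bar u_i^2=1$ makes it nonempty; and the orthogonality $\int \bar u_i^2\bar u_j^2=0$ combined with continuity rules out any overlap point, since otherwise both functions would be nonzero on a common open neighborhood. Thus $(\Omega_{\bar u_1},\dots,\Omega_{\bar u_k})\in\mathcal P_a(\Omega)$. The standard fact that a continuous function in $H^1_0(\Omega)$ vanishing outside an open set $\omega\subset\Omega$ lies in $H^1_0(\omega)$ gives $\bar u_i\in H^1_0(\Omega_{\bar u_i})$, whence the Rayleigh-quotient bound $\lambda_1(\Omega_{\bar u_i})\leq \int_\Omega|\nabla\bar u_i|^2$. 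Summing and chaining with the previous step,
\[
c_a\leq\sum_i\lambda_1(\Omega_{\bar u_i})\leq\sum_i\int_\Omega|\nabla\bar u_i|^2=\tilde c_a\leq c_a,
\]
so every inequality is an equality: the outer ones yield $c_a=\tilde c_a$ together with the optimality of $(\Omega_{\bar u_1},\dots,\Omega_{\bar u_k})$ for \eqref{eigenvalue_problem2}, while the middle one, being a sum of nonnegative quantities that vanishes, forces $\int_\Omega|\nabla\bar u_i|^2=\lambda_1(\Omega_{\bar u_i})$ for each $i$; together with $\int_\Omega\bar u_i^2=1$ this identifies $\bar u_i$ as a first Dirichlet eigenfunction on $\Omega_{\bar u_i}$.

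The delicate point intrinsic to this proposition is already encoded in the hypothesis: continuity is precisely what is needed to make $\Omega_{\bar u_i}$ open, to promote the supports' $L^2$-orthogonality into genuine disjointness of open sets, and to guarantee that $\bar u_i$ qualifies as an $H^1_0$-function on its own positivity set. Once continuity is in hand, the argument above is essentially bookkeeping; the genuinely hard work of the paper will be to establish such continuity, treated separately in Sections \ref{sec:regularity} and \ref{secLip}.
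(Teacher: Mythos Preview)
Your proof is correct and follows essentially the same approach as the paper's own proof: both directions are handled by the same back-and-forth between partitions and function $k$-tuples, with the same chain of inequalities $c_a\leq\sum_i\lambda_1(\Omega_{\bar u_i})\leq J(\bar u)=\tilde c_a\leq c_a$. If anything, you are more explicit than the paper about why the $\Omega_{\bar u_i}$ are pairwise disjoint, why $\bar u_i\in H^1_0(\Omega_{\bar u_i})$, and how the eigenfunction identification follows from equality in the Rayleigh quotient; the paper's proof leaves these as understood.
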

\begin{proof}
We start by choosing a partition $(\omega_1,\ldots, \omega_k) \in   {\mathcal P}_a(\Omega)$. For each $i \in \{ 1, \ldots, k\}$, consider $u_i$ the first (positive) Dirichlet eigenfunction corresponding to $\omega_i \subseteq \Omega$, normalized in $L^2(\omega_i)$. Then $(u_1,\ldots, u_k)\in H_a$ and
\begin{align}
\tilde c_a
&\leq  \displaystyle \sum_{i=1}^k\int_{\Omega} |\nabla u_i|^2
 =  \displaystyle \sum_{i=1}^k\int_{\omega_{i}} |\nabla u_i|^2
= \sum_{i=1}^k\lambda_1(\omega_{i}). \label{eq:equivalence}  
\end{align}
Applying the infimum in \eqref{eq:equivalence} over the set ${\mathcal P}_a(\Omega)$, we get $\tilde c_a\leq  c_a$.

Now, assume there exists $(\bar{u}_1, \ldots, \bar{u}_k) \in H_a$ which solves the minimization problem \eqref{main_func}, and that $\bar{u}_1, \ldots, \bar{u}_k$ are continuous in $\Omega$. Observe that, for each $i$, $\Omega_{\bar u_i}=\{\bar u_i\neq 0\}$ is an open set and that $\int_{\Omega} |\nabla \bar u_i|^2=\int_{\Omega_{\bar u_i}} |\nabla \bar u_i|^2\geq \lambda_1(\Omega_{\bar u_i})$ and $(\Omega_{\bar u_1},\ldots, \Omega_{\bar u_k})\in  {\mathcal P}_a(\Omega)$. Then
\begin{align*}
\tilde c_a &=J(\bar u_1,\ldots, \bar u_k) = \sum_{i=1}^k\int_\Omega |\nabla \bar{u}_i|^2\geq \sum_{i=1}^k\lambda_1(\Omega_{\bar u_i})\geq c_a.
\end{align*}
Then $\tilde c_a=c_a$ and $ c_a$ is achieved.
\end{proof}

Therefore, in order to prove the main result of this paper, namely Theorem \ref{thm:main}, it is sufficient to prove:
\begin{equation*}\tag{C1}\label{eq:condition1}
\text{There exists a minimizer $(\bar{u}_1, \ldots, \bar{u}_k) \in H_a$ to \eqref{main_func}.}
\end{equation*}
\begin{equation*}\tag{C2}\label{eq:condition2}
\text{If $(\bar{u}_1, \ldots, \bar{u}_k) \in H_a$ is a minimizer to \eqref{main_func}, then each $u_i$ is locally Lipschitz continuous in $\Omega$.}
\end{equation*}

The first condition is proved next in Proposition  \ref{thm:condition_C1}, while the second is shown in Sections \ref{sec:regularity} and \ref{secLip}.

\begin{Proposition}\label{thm:condition_C1}
The infimum $\tilde c_a$ is achieved, that is, there exists $(\bar{u}_1,\ldots, \bar{u}_k) \in H_a$ such that
\[
\tilde c_a = J(\bar{u}_1,\ldots, \bar{u}_k) \leq  J(u_1,\ldots, u_k)\; \; \text{ for all}\;\; (u_1,\ldots,u_k) \in H_a.
\]
\end{Proposition}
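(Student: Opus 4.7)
The plan is to apply the direct method of the calculus of variations. Pick a minimizing sequence $(u_1^n,\ldots,u_k^n)\in H_a$ so that $J(u_1^n,\ldots,u_k^n)\to \tilde c_a$. Since each $u_i^n\in H^1_0(\Omega)$ satisfies $\int_\Omega (u_i^n)^2=1$ and $\int_\Omega|\nabla u_i^n|^2\le J(u_1^n,\ldots,u_k^n)$ is bounded, the sequence is bounded in $H^1_0(\Omega)^k$. Extracting a (non-relabelled) subsequence, we obtain $u_i^n\rightharpoonup \bar u_i$ in $H^1_0(\Omega)$, and by Rellich--Kondrachov compactness $u_i^n\to \bar u_i$ strongly in $L^2(\Omega)$ and pointwise almost everywhere.

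From the strong $L^2$ convergence, $\int_\Omega \bar u_i^2=1$ for every $i$. Since $(u_i^n u_j^n)(x)=0$ for a.e.\ $x\in\Omega$ and all $i\ne j$ (by the constraint $\int_\Omega (u_i^n)^2(u_j^n)^2=0$), passing to the almost-everywhere limit gives $\bar u_i\bar u_j=0$ a.e., hence $\int_\Omega \bar u_i^2\bar u_j^2=0$. In particular $\bar u_i\neq 0$ because $\|\bar u_i\|_{L^2}=1$.

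For the measure constraint, observe that if $\bar u_i(x)\ne 0$ and $u_i^n(x)\to \bar u_i(x)$, then $u_i^n(x)\ne 0$ for all sufficiently large $n$, so $\chi_{\Omega_{\bar u_i}}(x)\le \liminf_{n}\chi_{\Omega_{u_i^n}}(x)$ for a.e.\ $x$. Fatou's lemma then yields $|\Omega_{\bar u_i}|\le \liminf_n |\Omega_{u_i^n}|$, and summing over $i$ (a finite sum) gives
\[
\sum_{i=1}^k |\Omega_{\bar u_i}|\le \liminf_n \sum_{i=1}^k |\Omega_{u_i^n}|\le a.
\]
Consequently $(\bar u_1,\ldots,\bar u_k)\in H_a$. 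Finally, weak lower semicontinuity of the Dirichlet integral gives $J(\bar u_1,\ldots,\bar u_k)\le \liminf_n J(u_1^n,\ldots,u_k^n)=\tilde c_a$, so $(\bar u_1,\ldots,\bar u_k)$ is a minimizer.

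The only non-routine point is the behaviour of the measure term $\sum_i|\Omega_{u_i}|$ under weak convergence, since in general the Lebesgue measure of the support is not continuous (nor even semicontinuous on the ``wrong'' side). The key observation that rescues us is that we only need the inequality $|\Omega_{\bar u_i}|\le \liminf_n|\Omega_{u_i^n}|$, which follows just from the pointwise a.e.\ convergence $u_i^n\to \bar u_i$ and a Fatou argument on characteristic functions; the orthogonality constraint $\int u_i^2 u_j^2=0$ likewise passes to the limit thanks to the strong $L^2$/pointwise convergence rather than requiring any compactness of supports.
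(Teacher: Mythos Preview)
Your proof is correct and follows essentially the same approach as the paper's: direct method, extract a subsequence converging weakly in $H^1_0$, strongly in $L^2$, and a.e., then verify each constraint of $H_a$ passes to the limit (with Fatou for the measure term) and conclude by weak lower semicontinuity of the Dirichlet energy. The closing paragraph explaining why only the one-sided inequality $|\Omega_{\bar u_i}|\le \liminf_n|\Omega_{u_i^n}|$ is needed is a nice touch that the paper leaves implicit.
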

\begin{proof} 
 It is clear that  $0\leq \tilde c_{{a}} <\infty$.
Let $(u_{1,n},\ldots, u_{k,n})_{n\in \mathbb{N}} \subset H_a$ be a minimizing sequence for $J$ and we may suppose that
\begin{equation}\label{inq_existence}
\tilde c_a+ 1 \geq J( u_{1,n},\ldots, u_{k,n})=\sum_{i=1}^k \int_{\Omega}|\nabla u_{i,n} |^2\quad \text{for all } \, n.
\end{equation}
Therefore, for all $i=1,\ldots, k$, the sequences $(u_{i,n})_{n \in \mathbb{N}}$ are bounded  in $H^1_0(\Omega)$ and there exist $\bar u_i\in H^1_0(\Omega)$ such that, up to subsequences, as $n\to \infty$,
\[
u_{i,n} \to \bar u_i\qquad \text{weakly in $H^1_0(\Omega)$,\ strongly in $L^2(\Omega)$, \ a.e. in $\Omega$.}
\]
Then
\[
\int_{\Omega} \bar{u}_i^2=  \lim_{n \to \infty} \int_{\Omega} u_{i,n}^2 = 1 \text{ for every $i$},\quad |\bar{u}_i(x)|^2 |\bar{u_j}(x)|^2=  \lim_{n\to \infty} |u_{i,n}(x)|^2 |u_{j,n}(x)|^2=0 \:\text{ a.e. $x$ in } \Omega,\ i\neq j
\]
and, by applying Fatou's Lemma, we obtain
\[
|\Omega_{\bar{u}_i}| = \int_{\Omega} \chi_{\Omega_{\bar{u}_i}}(x)
\leq \int_{\Omega} \liminf_{n \rightarrow \infty} \chi_{\Omega_{u_{i,n}}}(x)\leq \liminf_{n \rightarrow \infty}\int_{\Omega} \chi_{\Omega_{u_{i,n}}}(x) = \liminf_{n \rightarrow \infty} |\Omega_{u_{i,n}}|.
\]
Thus,
\[
\sum_{i=1}^k |\Omega_{\bar{u}_i}|  \leq \liminf_{n \rightarrow \infty}  \sum_{i=1}^k |\Omega_{u_{i,n}}| 
\leq a.
\]
Therefore $(\bar{u}_1,\ldots, \bar{u}_k)$ belongs to $H_a$ and so
\[
\tilde c_a \leq J(\bar{u}_1,\ldots,\bar{u}_k)\leq \liminf_{n\to \infty} J(u_{1,n},\ldots, u_{k,n})=\tilde c_a,
\]
which finishes the proof.
\end{proof}

Given $\mu>0$, define the penalized functional $J_{\mu}: \overline{H} \to \mathbb{R}$ by
\begin{equation*}
J_{\mu} (u_1,\ldots, u_k):= \sum_{i=1}^k \frac{\displaystyle \int_{\Omega}|\nabla u_i|^2}{\displaystyle \int_{\Omega} u_i^2}  + \mu \left[ \sum_{i=1}^k |{\Omega}_{u_i}| - a\right ]^{+}, \qquad \text{for} \ \ (u_1\ldots, u_k) \in \overline{H},
\end{equation*}
where
\begin{equation*}
\overline{H} : = \Big\{(u_1,\ldots,  u_k) \in H^1_0(\Omega;\R^k)\:  \Big|\: \quad  u_i \neq 0\ \forall i,\   u_i \cdot u_j \equiv 0\ \ \forall i\neq j \Big \}.
\end{equation*}

\begin{Proposition}\label{prop_pen}
Let $(\bar{u}_1,\ldots, \bar{u}_k)\in H_a$ be a minimizer of \eqref{main_func} and take $\mu> \left(\dfrac{2^{(k-1)/2}\, \tilde{c}_a}{N | B_1|^{1/N}}a^{\frac{2-N}{2N}} \right)^2$. Then
\begin{equation}\label{eq_pen}
\sum_{i=1}^k\int_{\Omega}|\nabla \bar{u}_i|^2  \leq J_\mu(u_1,\ldots, u_k)
\end{equation}
for all $(u_1,\ldots, u_k) \in \overline{H}$. In particular, $J$ and $J_{\mu}$ have a common minimizer.
\end{Proposition}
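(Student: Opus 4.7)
The plan is to show $J_\mu(u_1,\ldots,u_k) \geq \tilde c_a$ for every $(u_1,\ldots,u_k) \in \overline H$, with equality achieved at $(\bar u_1,\ldots,\bar u_k)$; the ``in particular'' clause follows immediately. The equality is free of charge: since $(\bar u_1,\ldots,\bar u_k)\in H_a$, each $\int_\Omega \bar u_i^2 =1$, so the Rayleigh quotients in $J_\mu$ reduce to $\int_\Omega|\nabla \bar u_i|^2$, and $\sum_i|\Omega_{\bar u_i}|\leq a$ kills the penalty term, giving $J_\mu(\bar u)=J(\bar u)=\tilde c_a$.

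For the inequality, I would fix $(u_1,\ldots,u_k)\in \overline H$. Replacing $u_i$ by $|u_i|$ preserves every quantity entering $J_\mu$, so assume $u_i\geq 0$; by the separate homogeneity of $J_\mu$ in each component, I can also normalize so that $\int_\Omega u_i^2 =1$. Set $b:=\sum_i|\Omega_{u_i}|$. If $b\leq a$, then $(u_1,\ldots,u_k)\in H_a$ is itself admissible in \eqref{main_func}, so $J_\mu(u)=\sum_i\int_\Omega|\nabla u_i|^2 \geq \tilde c_a$ and we are done. If $b>a$, I would construct a competitor by truncation: setting $u_i^t:=(u_i-t)^+$ preserves $H^1_0$-regularity, disjointness and nonnegativity of the family, and since $t\mapsto \sum_i|\{u_i>t\}|$ is nonincreasing from $b$ to $0$, there exists $t_0>0$ making this sum equal to $a$ (after, if needed, an arbitrarily small modification to avoid jumps). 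Normalizing in $L^2$---and, should some $u_i^{t_0}$ vanish identically, filling the missing component with a first Dirichlet eigenfunction on a small ball placed in $\Omega$ disjoint from the remaining supports, whose volume fits into the slack---yields an admissible $(v_1,\ldots,v_k)\in H_a$, so that $\tilde c_a\leq \sum_i\int_\Omega|\nabla v_i|^2$.

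What remains is the quantitative estimate
\[
\sum_i\frac{\int_\Omega|\nabla u_i^{t_0}|^2}{\int_\Omega (u_i^{t_0})^2}\leq \sum_i\int_\Omega|\nabla u_i|^2 + \mu\,(b-a),
\]
which, combined with the previous step, closes the argument. The natural route is to write each summand on the left as a single fraction and use the pointwise inequality $u_i^2-(u_i-t_0)_+^2\leq 2t_0 u_i$ together with Cauchy--Schwarz to control the $L^2$-defect by a multiple of $t_0$, while the Faber--Krahn inequality applied to the level sets removed by the truncation yields control on $t_0$ in terms of the excess volume $b-a$.

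The main obstacle is the careful calibration of this estimate: the lower bound on $\mu$ in the statement---with its specific dependence on $\tilde c_a$, $a$, $N$ and $|B_1|$, together with the combinatorial factor $2^{(k-1)/2}$---must exactly match the universal constant that emerges from this Faber--Krahn/isoperimetric bookkeeping, including a discrete Cauchy--Schwarz used to redistribute the single-component estimate across the $k$ components. A secondary issue is the possible collapse of some component under truncation, handled by the ball-filling trick above. Once the quantitative estimate is proved, $J_\mu \geq \tilde c_a$ on $\overline H$ follows, identifying $(\bar u_1,\ldots,\bar u_k)$ as a common minimizer of $J$ and $J_\mu$.
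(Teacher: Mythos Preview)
Your overall plan---reduce to showing $J_\mu\geq \tilde c_a$ on $\overline H$, with equality at $(\bar u_1,\ldots,\bar u_k)$---is sound, and the reduction to $u_i\geq 0$, $\|u_i\|_2=1$ is fine. The genuine gap is the ``quantitative estimate''
\[
\sum_i\frac{\int_\Omega|\nabla u_i^{t_0}|^2}{\int_\Omega (u_i^{t_0})^2}\;\leq\;\sum_i\int_\Omega|\nabla u_i|^2+\mu(b-a),
\]
which you state but do not prove. Your sketch (the pointwise bound $u_i^2-(u_i-t_0)_+^2\leq 2t_0 u_i$ plus Cauchy--Schwarz) indeed gives $\|u_i^{t_0}\|_2^2\geq 1-2t_0\,b_i^{1/2}$, but this is only useful when $t_0 b_i^{1/2}$ is small, and for an \emph{arbitrary} $u\in\overline H$ there is no a priori smallness of $t_0$: if some $u_i$ is nearly a plateau, the truncation height required to shed the excess volume is comparable to $\sup u_i$, the $L^2$-norm of $u_i^{t_0}$ collapses, and the Rayleigh quotient on the left can blow up independently of $\mu(b-a)$. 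Your appeal to ``Faber--Krahn applied to the level sets removed by the truncation'' to bound $t_0$ in terms of $b-a$ is not a known mechanism; Faber--Krahn controls eigenvalues from below in terms of volume, not truncation heights, and the coarea/isoperimetric route relates perimeters of superlevel sets to gradient integrals, not to $t_0$ directly. In short, the key inequality has no proof here, and there is a concrete obstruction to the proposed route.

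The paper avoids this difficulty by \emph{not} attempting a finite-$t_0$ comparison for arbitrary $u$. Instead it first produces a minimizer $(u_{1,\mu},\ldots,u_{k,\mu})$ of $J_\mu$ on $\overline H$ (same compactness argument as for $J$), normalizes it, and then argues by contradiction that $\sum_i|\Omega_{u_{i,\mu}}|\leq a$. The contradiction comes from an \emph{infinitesimal} truncation: minimality gives $J_\mu(u_\mu)\leq J_\mu\big((u_{i,\mu}-t)^+\big)$ for all small $t>0$; expanding to first order, applying the coarea formula and the isoperimetric inequality, and letting $t\to 0^+$ yields an upper bound on $\sqrt{\mu}$ that violates the hypothesis on $\mu$. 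Once $\sum_i|\Omega_{u_{i,\mu}}|\leq a$, one has $J_\mu(u_\mu)=J(u_\mu)\geq \tilde c_a$ and equality with $J_\mu(\bar u)$ closes the loop. The point is that minimality of $u_\mu$ supplies the comparison inequality for free at the infinitesimal level, so one never has to control a finite $t_0$ or worry about components collapsing.
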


\begin{proof} 
Let $u_1,\ldots, u_k \in H^1_0(\Omega)\setminus \{0\}$ such that $u_i \neq 0$ for all $i$,  $ u_i\cdot u_j =0$ for all $i\neq j$ and $ \sum_{i=1}^k |{\Omega}_{u_i}| \leq a$. Since $\left(\frac{u_1}{\|u_1\|_2}, \ldots, \frac{u_k}{\|u_k\|_2}\right)\in H_a$ and $(\bar u_1,\ldots, \bar u_k)$ is a minimizer of \eqref{main_func},
\begin{equation}\label{eq_01}
\sum_{i=1}^k\int_{\Omega}|\nabla \bar{u}_i|^2  \leq \sum_{i=1}^k\frac{\int_{\Omega}|\nabla u_i|^2}{\int_{\Omega} u_i^2}.
\end{equation}

By the compactness embedding of $H^1_0(\Omega)$ into $L^2(\Omega)$ and reasoning exactly as in the proof of Proposition \ref{thm:condition_C1}, we can find a minimizer $(u_{1,\mu},\ldots, u_{k, \mu})\in \overline{H}$ of $J_{\mu}$. With no loss of
 generality, we may assume $u_{i,\mu}\geq0$ and $\|u_{i,\mu}\|_2 = 1$ for all $i$. 
 
Let $\mu> \left(\dfrac{2^{(k-1)/2}\, \tilde{c}_a}{N | B_1|^{1/N}}a^{\frac{2-N}{2N}} \right)^2$. Suppose, by contradiction, that $ \sum_{i=1}^k |{\Omega}_{u_{i,\mu}}| > a$ and consider the auxiliary functions $u_i^t:= (u_{i,\mu} - t)^{+}$, for $t>0$. Observe that, since $(u_{1,\mu},\ldots, u_{k,\mu}) \in \overline{H}$, we also have $(u_1^t,\ldots, u_k^t) \in \overline{H}$ for $t>0$. Since $(u_{1,\mu},\ldots, u_{k,\mu})$ is a minimizer of $J_{\mu}$, we get 
\[
J_{\mu}(u_{1,\mu}, \ldots, u_{k,\mu}) \leq J_{\mu}(u_1^t,\ldots, u_k^t).
\]
Using the fact that $ \sum_{i=1}^k |{\Omega}_{u_{i,\mu}}| > a$, for $t>0$ sufficiently small, we obtain
\begin{align*}
\sum_{i=1}^k\displaystyle\int_{\Omega}|\nabla u_{i,\mu}|^2 + \mu\left[\sum_{i=1}^k|{\Omega}_{u_{i,\mu}}| - a\right]\leq  \displaystyle \sum_{i=1}^k\frac{\int_{\Omega} |\nabla (u_{i,\mu} - t)^{+}|^2}{\int_{\Omega} |(u_{i,\mu} - t)^{+}|^2} + \mu\left[\sum_{i=1}^k|{\Omega}_{u_i^t}| - a\right].
\end{align*}                                                                              
By using Lemma \ref{lemma:expansion_L^2}, we have

\begin{align*}
\sum_{i=1}^k\int_{\Omega}|\nabla u_{i,\mu}|^2 +  \mu \sum_{i=1}^k|\{0< u_{i,\mu} \leq t \}| \leq \sum_{i=1}^k \int_{\{ u_{i,\mu}> t \}} |\nabla u_{i,\mu}|^2 + 2t \int_{\Omega}u_{i,\mu} \int_{\Omega} |\nabla (u_{i,\mu} - t)^{+}|^2+ o(t).\\
\end{align*}
From the fact that $\|u_{i,\mu}\|_2 = 1$ for all $i=1, \ldots, k$, combined with the H\"older inequality, we infer that 
\begin{align*}
\displaystyle \sum_{i=1}^k\int_{\{0< u_{i,\mu}\leq t \}} |\nabla u_{i,\mu}|^2+ \mu \sum_{i=1}^k|\{0< u_{i,\mu} \leq t\}| \leq 
\displaystyle 2t \sum_{i=1}^k|\Omega_{u_{i,\mu}}|^{1/2}\int_{\Omega} |\nabla (u_{i,\mu} - t)^{+}|^2 + o(t).
\end{align*}

Now, we approximate $u_{i,\mu}$ by a mollifier and apply the coarea formula with Sard's Lemma to obtain the inequality (through the limit) to $u_{i,\mu}$ 
\begin{align*}
\displaystyle \sum_{i=1}^k\int_0^t \int_{\{u_{i,\mu} = s\}} \left( |\nabla u_{i,\mu}|+ \frac{\mu}{|\nabla u_{i,\mu}|}\right)d{\mathcal H}^{N-1}ds  \leq 
\displaystyle 2t \sum_{i=1}^k|\Omega_{u_{i,\mu}}|^{1/2}\int_{\Omega} |\nabla (u_{i,\mu} - t)^{+}|^2  + o(t).
\end{align*}

Minimizing the function $x \rightarrow x +\mu x^{-1}$ over the set  $\{x > 0\}$ leads to 

\begin{align*}
\displaystyle 2\sqrt{\mu}\sum_{i=1}^k\int_0^t \int_{\{u_{i,\mu} = s\}} d{\mathcal H}^{N-1}ds  \leq \displaystyle 2t \sum_{i=1}^k|\Omega_{u_{i,\mu}}|^{1/2}\int_{\Omega} |\nabla (u_{i,\mu} - t)^{+}|^2+ o(t).
\end{align*}
 
At this point, we can use the isoperimetric inequality, divide the equation by $t$, let $t \to 0$ to conclude that 
\begin{align*}
\displaystyle N | B_1|^{1/N}\sqrt{\mu} \sum_{i=1}^k|\Omega_{u_{i,\mu}}|^{\frac{N-1}{N}} \leq  \sum_{i=1}^k|\Omega_{u_{i,\mu}}|^{1/2}\int_{\Omega} |\nabla u_{i,\mu}|^2.
\end{align*}

Now, notice that
\[
\sum_{i=1}^k\int_{\Omega} |\nabla u_{i,\mu}|^2 \leq J_\mu(u_{1,\mu}, \ldots ,u_{k,\mu}) \leq J_\mu(\bar{u}_1, \ldots, \bar{u}_k) = J(\bar{u}_1, \ldots, \bar{u}_k) = \tilde{c}_a,
\] 
and $\tilde{c}_a$ does not depend on $\mu$. 
Hence,
\[
N | B_1|^{1/N}\sqrt{\mu}\left(\sum_{i=1}^k|\Omega_{u_{i,\mu}}|\right)^{\frac{N-1}{N}} \leq 2^{(k-1)/2} \, \tilde{c}_a\left(\sum_{i=1}^k|\Omega_{u_{i,\mu}}|\right)^{1/2}.
\] 
Therefore
\[
\sqrt{\mu} \leq \dfrac{2^{(k-1)/2}\,\tilde{c}_a}{N | B_1|^{1/N}}\left(\sum_{i=1}^k|\Omega_{u_{i,\mu}}|\right)^{\frac{2-N}{2N}} \leq \dfrac{2^{(k-1)/2}\, \tilde{c}_a}{N | B_1|^{1/N}}a^{\frac{2-N}{2N}},
\]  
which contradicts the assumption on the size of $\mu$. Hence, $\sum_{i=1}^k|{\Omega}_{u_{i,\mu}}|  \leq a$.

Finally, we conclude that
\[
J_\mu(u_{1,\mu}, \ldots, u_{k,\mu}) \leq J_\mu(\bar{u}_1, \ldots, \bar{u}_k) = \sum_{i=1}^k\int_{\Omega} |\nabla\bar{u}_i|^2 \leq J_\mu(u_{1,\mu},\ldots, u_{k,\mu}),
\]  
where the last inequality follows from \eqref{eq_01}.
\end{proof}

\smallbreak

\section{Continuity of minimizers}\label{sec:regularity}

Let $(\bar{u}_1, \ldots, \bar{u}_k) \in H_a$ be a minimizer to \eqref{main_func}. Up to replacing $\bar u_i$ by $|\bar u_i|$, we may suppose that $\bar u_i \geq 0$ in $\Omega$ for all $i=1, \ldots,k$. This is done throughout this paper. Then, inspired by \cite{ContiTerraciniVerziniAsymptotic,ContiTerraciniVerziniOPP,ContiTerraciniVerziniVariation}, set
\[
\hat{u}_i = \bar{u}_i - \sum_{j=1, j\neq i}^k \bar{u}_j, \quad \lambda_{\bar{u}_i} = \int_\Omega|\nabla\bar{u}_i|^2 \quad i=1,\ldots,k.
\]
We start by showing that minimizers of \eqref{main_func} are bounded functions.

\begin{Proposition}\label{prop_bound}
Let $(\bar{u}_1, \ldots, \bar{u}_k) \in H_a$ be a (nonnegative) minimizer to \eqref{main_func}. Then, for each $i=1, \ldots,k$, $\bar u_i$ satisfies
\begin{equation}\label{eq:subsolution}
-\Delta\bar{u}_i \leq \lambda_{\bar{u}_i}\bar{u}_i\;\;\;\mbox{ in } \;\; \Omega,
\end{equation}
in the sense of distributions. In particular, for each $i=1, \ldots,k$:
\begin{itemize}
\item $\bar{u}_i$ is a bounded function;
\item $\bar u_i$ is defined at every $x\in \Omega$, in the sense that each $x$ is a Lebesgue point.
\end{itemize}
\end{Proposition}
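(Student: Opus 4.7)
The plan is to reduce the statement to proving the distributional subsolution inequality \eqref{eq:subsolution}; boundedness and the Lebesgue-point representative then follow from classical elliptic theory. Indeed, once \eqref{eq:subsolution} is in hand, De~Giorgi--Moser iteration applied to a nonnegative $H^1_0$ subsolution of $-\Delta u\le \lambda_{\bar u_i}u$ gives $\bar u_i\in L^\infty(\Omega)$; and writing $\bar u_i=v-w$, where $v\in C^{0,\alpha}(\overline{\Omega})$ solves $-\Delta v=\lambda_{\bar u_i}\bar u_i$ with $v=0$ on $\partial\Omega$, one has $-\Delta w=\mu$ for a nonnegative Radon measure $\mu$, so $w\ge 0$ is a superharmonic function admitting a canonical lower-semicontinuous precise representative for which every $x\in\Omega$ is a Lebesgue point; hence $\bar u_i=v-w$ inherits a representative with the stated property.

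To derive \eqref{eq:subsolution}, I would exploit the equivalence between $J$ and the penalized functional $J_\mu$ from Proposition~\ref{prop_pen}, which removes the $L^2$-normalization constraint and thereby allows one-component perturbations. Fix $\varphi\in C_c^\infty(\Omega)$ with $\varphi\ge 0$, and for $t>0$ small set $u_i^t:=(\bar u_i-t\varphi)^+$ and $u_j^t:=\bar u_j$ for $j\ne i$. Since $\bar u_i\ge 0$, one has $\{u_i^t>0\}\subseteq\{\bar u_i>0\}$, so pairwise disjointness of supports is preserved and $\sum_j|\Omega_{u_j^t}|\le\sum_j|\Omega_{\bar u_j}|\le a$, which makes the penalty term vanish (as it already does at $\bar u$). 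Moreover $u_i^t\not\equiv 0$ for $t$ small, because $\{\bar u_i>s\}$ has positive measure for some $s>0$. Hence $(u_1^t,\ldots,u_k^t)\in\overline{H}$ and $J_\mu(u^t)\ge J_\mu(\bar u)$ reduces, after cancelling the $j\ne i$ Rayleigh quotients, to
\begin{equation*}
\int_\Omega|\nabla u_i^t|^2 \;\ge\; \lambda_{\bar u_i}\int_\Omega(u_i^t)^2.
\end{equation*}

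Splitting the integrals according to $\{\bar u_i>t\varphi\}$ versus $\{0<\bar u_i\le t\varphi\}$ and using $\nabla\bar u_i=0$ a.e.\ on $\{\bar u_i=0\}$, a direct expansion gives
\begin{equation*}
\int_\Omega|\nabla u_i^t|^2-\lambda_{\bar u_i}\int_\Omega(u_i^t)^2 \;=\; -2t\Bigl(\int_\Omega\nabla\bar u_i\cdot\nabla\varphi-\lambda_{\bar u_i}\int_\Omega\bar u_i\varphi\Bigr)-\int_{\{0<\bar u_i\le t\varphi\}}|\nabla\bar u_i|^2 + o(t).
\end{equation*}
The crucial observation is that the truncation remainder $\int_{\{0<\bar u_i\le t\varphi\}}|\nabla\bar u_i|^2$ is nonnegative, so the previous inequality forces $2t(\lambda_{\bar u_i}\int_\Omega\bar u_i\varphi-\int_\Omega\nabla\bar u_i\cdot\nabla\varphi)\ge -o(t)$; dividing by $2t$ and letting $t\to 0^+$ delivers $\int_\Omega\nabla\bar u_i\cdot\nabla\varphi\le\lambda_{\bar u_i}\int_\Omega\bar u_i\varphi$ for every such $\varphi$, which is exactly \eqref{eq:subsolution}. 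The main subtlety of the argument is precisely this truncation bookkeeping: the moving set $\{\bar u_i>t\varphi\}$ prevents a clean linearization of the squared gradient, and the one-sided nature of the perturbation (decreasing $\bar u_i$ rather than a signed variation preserving $\|u_i\|_2=1$) is what guarantees this remainder carries a favorable sign, which is the reason for passing through $J_\mu$ rather than working inside $H_a$.
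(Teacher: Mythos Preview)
Your proof is correct and follows essentially the same route as the paper: perturb $\bar u_i$ downward via $(\bar u_i-t\varphi)^+$, use minimality to obtain a Rayleigh-quotient inequality, expand to first order in $t$, and conclude \eqref{eq:subsolution}; boundedness and the Lebesgue-point representative then follow from standard elliptic theory (the paper cites \cite[Lemma~4.2]{BHP2005} and Proposition~\ref{prop_Con}, which amount to the same arguments you sketch).

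The one point worth flagging is your stated rationale for passing through $J_\mu$. The paper does \emph{not} use Proposition~\ref{prop_pen} here: it simply renormalizes the perturbed component, taking $\check u_t=\bigl((\bar u_1-t\varphi)^+/\|(\bar u_1-t\varphi)^+\|_2,\bar u_2,\ldots,\bar u_k\bigr)\in H_a$ (Lemma~\ref{def-simples}), and compares $J$ directly. Since $J$ evaluated at a normalized vector is exactly the sum of Rayleigh quotients, this yields the identical inequality $\int_\Omega|\nabla\bar u_i|^2\le \int_\Omega|\nabla(\bar u_i-t\varphi)^+|^2/\|(\bar u_i-t\varphi)^+\|_2^2$ that you extract from $J_\mu$. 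So your detour is harmless but unnecessary. The paper also handles the truncation by the pointwise bound $|\nabla(\bar u_i-t\varphi)^+|^2\le|\nabla(\bar u_i-t\varphi)|^2$ rather than isolating the remainder $\int_{\{0<\bar u_i\le t\varphi\}}|\nabla\bar u_i|^2$ with its favorable sign as you do; both manoeuvres are equivalent here.
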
 

\begin{proof} 
Let $\varphi \in C^{\infty}_c(\Omega)$ with $\varphi \geq 0$ and set $\check u_t$ as in \eqref{def-simples} for small $t$. Then, from Lemma \ref{def-simples}, $\check u_t \in H_a$. Using the fact that $(\bar{u}_1, \ldots, \bar{u}_k) \in H_a$ is a minimizer to \eqref{main_func} and \eqref{inverse-norm-lemma}, we infer that
\begin{align*}
\int_{\Omega}|\nabla\bar{u}_1|^2 &\leq  \dfrac{\int_{\Omega}|\nabla (\bar{u}_1 - t\varphi)^+|^2}{\|(\bar{u}_1 - t\varphi)^+\|_2^2} \leq  \dfrac{\int_{\Omega}|\nabla (\bar{u}_1 - t\varphi)|^2}{\|(\bar{u}_1 - t\varphi)^+\|_2^2} =  \int_\Omega |\nabla (\bar u_1-t\varphi)|^2\left( 1+2t\int_\Omega \bar u_1 \varphi + O(1)t^2\right) \\
& = \displaystyle\int_{\Omega}|\nabla\bar{u}_1|^2 -2t\int_{\Omega}\nabla\bar{u}_1\cdot \nabla\varphi +\displaystyle2t\int_{\Omega}|\nabla\bar{u}_1|^2\int_{\Omega}\bar{u}_1\varphi + O(1)t^2.
\end{align*}
Dividing the inequality above by $t$ and letting $t \to 0$, we obtain

\begin{equation}\label{eq_05}
\int_{\Omega}\nabla\bar{u}_1\cdot \nabla\varphi \leq \lambda_{\bar{u}_1}\int_{\Omega}\bar{u}_1\varphi,
\end{equation}
which implies \eqref{eq:subsolution}. By classical elliptic estimates, see for instance the proof of \cite[Lemma 4.2]{BHP2005}, we infer that $\bar{u}_1 \in L^{\infty}(\Omega)$. Similarly, we can also ensure that $\bar{u}_i \in L^{\infty}(\Omega)$ for all $i=2, \ldots, k$. Finally, the fact that every point is a Lebesgue point for each component $\bar u_i$ is a direct consequence of Proposition \ref{prop_Con}.
\end{proof}

The following result is crucial for everything that follows.

\begin{Proposition}\label{prop_03}
Let $(\bar{u}_1, \ldots, \bar{u}_k)$ be a minimizer of \eqref{main_func}. Then, for each $i=1, \ldots, k$, and for any nonnegative function $\varphi \in H_0^1(\Omega)$ such that $supp(\varphi) \subseteq B_{r}(x_0) \Subset \Omega$, 
\begin{equation}\label{inq1:Prop3.2}
\left\langle -\Delta\hat{u}_i -\lambda_{\bar{u}_i}\bar{u}_i+ \sum_{j\neq i}\lambda_{\bar{u}_j}\bar{u}_j, \varphi \right\rangle \geq -C\left(r^{N-1} + \|\varphi\|_{1}+ r\|\varphi\|_{2}^2 + r\|\nabla\varphi\|^2_2 + r\|\varphi\|_{1}\|\nabla\varphi\|^2_2 + r\|\varphi\|_{2}^2\|\nabla\varphi\|^2_2\right),
\end{equation}
where $C$ depends only on $\tilde{c}_a, N$, $\|\bar{u}_1\|_\infty, \ldots, \|\bar{u}_k\|_\infty$ and $a$. 
\end{Proposition}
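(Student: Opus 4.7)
The plan is to test the minimizer $(\bar u_1,\ldots,\bar u_k)$ against a Conti--Terracini--Verzini-type competitor and exploit the penalized estimate of Proposition~\ref{prop_pen}. For $\varphi \in H_0^1(\Omega)$ nonnegative with $\supp\varphi \subseteq B_r(x_0)$, I set
$$w_i:=(\hat u_i + \varphi)^+, \qquad w_j:=(\bar u_j-\varphi)^+ \quad (j\neq i),$$
and check that $(w_1,\ldots,w_k)\in\overline H$: on $\Omega_{\bar u_j}$ one has $\hat u_i+\varphi = \varphi-\bar u_j$, so $w_i>0$ there precisely where $w_j=0$, which gives $w_i w_j\equiv 0$; on $\Omega_{\bar u_i}$ and on $E=\Omega\setminus\bigcup_l\Omega_{\bar u_l}$ only $w_i$ is non-trivial. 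For $r$ small enough none of the $w_l$ vanishes identically. Proposition~\ref{prop_pen} then yields
$$\sum_l \lambda_{\bar u_l} \leq \sum_l \frac{\int_\Omega|\nabla w_l|^2}{\int_\Omega w_l^2} + \mu\Bigl[\sum_l|\Omega_{w_l}|-a\Bigr]^+.$$

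The computational core is the following \emph{exact} identity, obtained by splitting every integral over the three regions $\Omega_{\bar u_i}$, $\Omega_{\bar u_j}$ ($j\neq i$), and $E$, and using $\nabla\hat u_i=\nabla\bar u_i-\sum_{j\neq i}\nabla\bar u_j$:
$$\sum_l\int_\Omega|\nabla w_l|^2 \;=\; \sum_l\lambda_{\bar u_l}+2\int_\Omega\nabla\hat u_i\cdot\nabla\varphi+\int_\Omega|\nabla\varphi|^2.$$
An analogous case-analysis for the denominators gives $\int_\Omega w_i^2 = 1+2\int\bar u_i\varphi+R_i$ and $\int_\Omega w_j^2 = 1-2\int\bar u_j\varphi+S_j$ with nonnegative remainders controlled by $\|\varphi\|_2^2$, and one obtains the measure bound
$$\sum_l|\Omega_{w_l}|-a \;\leq\; |\Omega_\varphi\cap E|.$$
Expanding $1/\int w_l^2 = 1-(\int w_l^2-1)+O((\int w_l^2-1)^2)$, inserting the previous formulas, and collecting the linear-in-$\varphi$ contributions, the penalized inequality rearranges to
$$2\Bigl\langle-\Delta\hat u_i-\lambda_{\bar u_i}\bar u_i+\sum_{j\neq i}\lambda_{\bar u_j}\bar u_j,\varphi\Bigr\rangle \;\geq\; -\int_\Omega|\nabla\varphi|^2\;-\;\mu|\Omega_\varphi\cap E|\;-\;\mathcal{E}(\varphi),$$
where $\mathcal{E}(\varphi)$ gathers the quadratic and higher-order remainders coming from the $R_i$, $S_j$ and the Neumann-series expansion.

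The closing step is to identify each term on the right with one of the contributions in \eqref{inq1:Prop3.2}. The $L^\infty$-bounds of Proposition~\ref{prop_bound}, Cauchy--Schwarz, and the Poincar\'e inequality $\|\varphi\|_2\leq Cr\|\nabla\varphi\|_2$ (available because $\supp\varphi\subseteq B_r$) reduce $\mathcal{E}(\varphi)$ to the polynomial combinations $\|\varphi\|_1$, $r\|\varphi\|_2^2$, $r\|\nabla\varphi\|_2^2$, $r\|\varphi\|_1\|\nabla\varphi\|_2^2$ and $r\|\varphi\|_2^2\|\nabla\varphi\|_2^2$; the factors of $r$ multiplying the quadratic terms come, for instance, from absorbing a mixed product $\|\nabla\varphi\|_2\|\varphi\|_2$ into $r\|\nabla\varphi\|_2^2$ via Poincar\'e. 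The hardest point, and the place where our setting genuinely departs from the classical CTV class $\mathcal{S}_{\lambda_1,\ldots,\lambda_k}$, is the measure term $\mu|\Omega_\varphi\cap E|$: this is the price one pays for letting the deformation $w_i$ spill into the empty region, and producing the advertised $r^{N-1}$ factor instead of the trivial bound $|B_r|=Cr^N$ requires a sharper perimeter- or isoperimetric-type control on the portion of $\Omega_\varphi$ that lies in $E$, together with the $L^\infty$-information on the $\bar u_l$. This is precisely the new difficulty introduced by the condition $a<|\Omega|$ and is what makes Proposition~\ref{prop_03} the key technical result of the paper.
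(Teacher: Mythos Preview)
Your competitor is essentially the right one, but you have set the deformation parameter equal to $1$, and this is a genuine gap that breaks the argument in two places.

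First, the term $-\int_\Omega|\nabla\varphi|^2$ that you isolate on the right-hand side is exactly $-\|\nabla\varphi\|_2^2$, with no factor of $r$. Poincar\'e on $B_r$ reads $\|\varphi\|_2\leq Cr\|\nabla\varphi\|_2$; it converts $L^2$-norms of $\varphi$ into $r$ times gradient norms, never the other way around. So there is no mechanism in your scheme to turn $\|\nabla\varphi\|_2^2$ into $r\|\nabla\varphi\|_2^2$, and the stated inequality \eqref{inq1:Prop3.2} is not reached. (This matters: in the rescaled version, Proposition~\ref{p:rescaled}, the extra $r$ is exactly what makes the gradient term vanish in the blow-up limit.) Second, your Neumann expansion of $1/\int_\Omega w_l^2$ requires $\bigl|\int_\Omega w_l^2-1\bigr|$ to be small, but for an arbitrary $\varphi\in H^1_0(\Omega)$ supported in $B_r$ this quantity is of order $\|\varphi\|_1+\|\varphi\|_2^2$ and need not be small at all.

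The paper repairs both issues with a single device: it uses $t\varphi$ in place of $\varphi$, with $t\in(0,1)$, and works with $(\hat u_i+t\varphi)^+$ and $(\hat u_j-t\varphi)^+$. The identity you wrote becomes $\sum_l\int|\nabla(\cdot)|^2=\sum_l\lambda_{\bar u_l}+2t\int\nabla\hat u_i\cdot\nabla\varphi+t^2\int|\nabla\varphi|^2$, and the measure penalty is at most $\mu|\Omega_\varphi|$ (no $t$). Dividing through by $2t$ and then \emph{choosing} $t=r$ gives simultaneously the factor $r$ in front of $\|\nabla\varphi\|_2^2$, $\|\varphi\|_2^2$, etc., and
\[
\frac{\mu|\Omega_\varphi|}{2t}\;\leq\;\frac{\mu|B_r|}{2r}\;\leq\;Cr^{N-1}.
\]
So the $r^{N-1}$ term does not come from any isoperimetric or perimeter control on $\Omega_\varphi\cap E$ as you conjecture; it is produced trivially by the $t=r$ trick applied to the crude bound $|\Omega_\varphi|\leq|B_r|$. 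Your diagnosis of where the difficulty lies is therefore inverted: the ``hard'' measure term is actually easy once $t$ is present, while the ``easy'' gradient term is the one your parameter-free approach cannot handle.
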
 
 
\begin{proof} 
Consider
\[
\tilde{u}_t=\left(\tilde{u}_{1,t}, \ldots, \tilde{u}_{k,t}\right)=\left(\frac{\left(\hat{u}_1+t \varphi\right)^{+}}{\left \|\left(\hat{u}_1+t \varphi\right)^{+}\right\|_2}, \frac{\left(\hat{u}_2-t \varphi \right)^{+}}{\left\|\left(\hat{u}_2-t \varphi\right)^+\right\|_2}, \ldots, \frac{\left(\hat{u}_k-t \varphi\right)^{+}}{\left\| \left(\hat{u}_k-t \varphi \right)^+\right\|_2}\right) ,
\]
with $t \in (0,1)$ sufficiently small. By using \eqref{eq_pen} and Lemma \ref{def-complexa}, we obtain
\begin{equation}\label{eq_24}
\sum_{i=1}^k\int_{\Omega}|\nabla\bar{u}_i|^2 \leq \displaystyle \dfrac{\int_{\Omega}|\nabla (\hat{u}_1 + t\varphi)^+|^2}{\|(\hat{u}_1 + t\varphi)^+\|^2_2} + \sum_{i=2}^k\dfrac{\int_{\Omega}|\nabla (\hat{u}_i - t\varphi)^+|^2}{\|(\hat{u}_i - t\varphi)^+\|^2_2} + \mu|\Omega_\varphi|.
\end{equation}
Since $\varphi \geq 0$, $0<t<1$, employing Lemma \ref{lemma:expansion_L^2}, we get
\[
\dfrac{1}{\|(\hat{u}_1 + t\varphi)^+\|^2_2}\int_{\Omega}|\nabla (\hat{u}_1 + t\varphi)^+|^2 \leq \displaystyle \int_{\Omega}|\nabla (\hat{u}_1 + t\varphi)^+|^2\left(1 - 2t\int_\Omega\bar{u}_1\varphi +O(1)\,  \|\varphi\|_2^2 \, t^2\right),
\]
and
\[
\dfrac{1}{\|(\hat{u}_i - t\varphi)^+\|^2_2}\int_{\Omega}|\nabla (\hat{u}_i - t\varphi)^+|^2 = \displaystyle \int_{\Omega}|\nabla (\hat{u}_i - t\varphi)^+|^2\left(1 + 2t\int_\Omega \bar{u}_i\varphi + O(1)\,  \|\varphi\|_2^2 \, t^2 \right),
\]
for all $i=2,\ldots,k$. In addition, since $0<t\leq 1$, we have that
\begin{align*}
\int_{\Omega}&|\nabla (\hat{u}_1 + t\varphi)^+|^2 \leq\int_{\Omega}|\nabla (\hat{u}_1 + t\varphi)|^2 \\
&= \sum_{j= 1}^k\int_\Omega |\nabla \bar{u}_j|^2 + 2t\int_\Omega \nabla \bar{u}_1\cdot \nabla \varphi- 2t\sum_{j=2}^k\int \nabla \bar{u}_j\cdot \nabla \varphi+t^2\int_\Omega |\nabla \varphi|^2\\
& \leq \sum_{j= 1}^k\int_\Omega |\nabla \bar{u}_j|^2 + t\int_\Omega|\nabla \bar{u}_1|^2 + t\int_{\Omega}|\nabla\varphi|^2 + t\sum_{j=2}^k\int_\Omega |\nabla \bar{u}_j|^2 + t\sum_{j=2}^k\int_\Omega|\nabla\varphi|^2 + t^2\int_\Omega |\nabla \varphi|^2 \\
& \leq \sum_{j= 1}^k\int_\Omega |\nabla \bar{u}_j|^2 + t\sum_{j= 1}^k\int_\Omega |\nabla \bar{u}_j|^2 +(k+1)t\int_{\Omega}|\nabla\varphi|^2,
\end{align*}
and, arguing similarly,
\begin{align*}
\int_{\Omega}&|\nabla (\hat{u}_i - t\varphi)^+|^2 \leq \sum_{j= 1}^k\int_\Omega |\nabla \bar{u}_j|^2 + t\sum_{j= 1}^k\int_\Omega |\nabla \bar{u}_j|^2 +(k+1)t\int_{\Omega}|\nabla\varphi|^2,
\end{align*}
for each $i = 2, \ldots, k$. Hence, since $\sum_{i=1}^k\int_{\Omega}|\nabla\bar{u}_i|^2$ is bounded and $\lambda_{\bar{u}_i} = \int_{\Omega} |\nabla \bar{u}_i|^2$,
\begin{align*}
\dfrac{1}{\|(\hat{u}_1 + t\varphi)^+\|^2_2}\int_{\Omega}|\nabla (\hat{u}_1 + t\varphi)^+|^2 \leq & \int_{\Omega}|\nabla (\hat{u}_1 + t\varphi)^+|^2 + O(1)t^2\|\varphi\|^2_{2}\|\nabla\varphi\|^2_2 + ct\|\varphi\|_{1}\\
& - 2t\lambda_{\bar{u}_1}\int_{\Omega}\bar{u}_1\varphi + ct^2\|\varphi\|_{1}\|\nabla\varphi\|^2_2 + O(1)t^2\|\varphi\|_{2}^2,
\end{align*}
and
\begin{align*}
\dfrac{1}{\|(\hat{u}_i - t\varphi)^+\|^2_2}\int_{\Omega}|\nabla (\hat{u}_i - t\varphi)^+|^2 \leq & \int_{\Omega}|\nabla (\hat{u}_i - t\varphi)^+|^2 + O(1)t^2\|\varphi\|^2_{2}\|\nabla\varphi\|^2_2 + ct\|\varphi\|_{1}\\
& + 2t\lambda_{\bar{u}_i}\int_{\Omega}\bar{u}_i\varphi + ct^2\|\varphi\|_{1}\|\nabla\varphi\|^2_2 + O(1)t^2\|\varphi\|_{2}^2.
\end{align*}
It follows from the fact $|\nabla(\hat{u}_1 + t \varphi)^+|^2 + \sum_{i=2}^k |\nabla(\hat{u}_i - t \varphi)^+|^2 = |\nabla(\hat{u}_1 + t \varphi)|^2$, combined with \eqref{eq_24}, Lemma \ref{def-complexa} and the estimates above that
\begin{align*}
0 \leq & \; 2t\int_\Omega \nabla\hat{u}_1\cdot \nabla\varphi + t^2\int_\Omega|\nabla\varphi|^2 - 2t\lambda_{\bar{u}_1}\int_{\Omega}\bar{u}_1\varphi + 2t\sum_{i=2}^k\lambda_{\bar{u}_i}\int_{\Omega}\bar{u}_i\varphi + ct^2\|\varphi\|_{1}\|\nabla\varphi\|^2_2  \\
& + ct\|\varphi\|_{1}  + O(1)t^2\|\varphi\|_{2}^2\|\nabla\varphi\|^2_2 + O(1)t^2\|\varphi\|_{2}^2 + \mu|\Omega_\varphi|.
\end{align*}
Hence, by dividing the inequality above by $2t$, we obtain
\begin{align*}
 -\int_\Omega \nabla\hat{u}_1\cdot \nabla\varphi + \lambda_{\bar{u}_1}\int_{\Omega}\bar{u}_1\varphi - \sum_{i=2}^k\lambda_{\bar{u}_i}\int_{\Omega}\bar{u}_i\varphi & \leq  ct\|\nabla\varphi\|_2^2 + ct\|\varphi\|_{1}\|\nabla\varphi\|^2_2 + c\|\varphi\|_{1}\\
 & \;\; + O(1)t\|\varphi\|_{2}^2\|\nabla\varphi\|^2_2 + O(1)t\|\varphi\|_{2}^2 + \mu\dfrac{|\Omega_\varphi|}{t}.   
\end{align*}
By choosing $t = r$ and using the fact that $|\Omega_\varphi|\leq |B_{r}|\leq c r^{N}$, we conclude that 
\begin{equation*}\label{eq_04} 
\left\langle \Delta\hat{u}_1 +\lambda_{\bar{u}_1}\bar{u}_1 - \sum_{i=2}^k\lambda_{\bar{u}_i}\bar{u}_i, \varphi \right\rangle \leq C\left(r^{N-1} + \|\varphi\|_{1}+ r\|\varphi\|_{2}^2 + r\|\nabla\varphi\|^2_2 + r\|\varphi\|_{1}\|\nabla\varphi\|^2_2 + r\|\varphi\|_{2}^2\|\nabla\varphi\|^2_2\right).  
\end{equation*}
Now, we do similar computations with the functions of the form 
\[
\frac{\left(\hat{u}_i+t \varphi\right)^{+}}{\left \|\left(\hat{u}_i+t \varphi\right)^{+}\right\|_2}, \frac{\left(\hat{u}_1-t \varphi \right)^{+}}{\left\|\left(\hat{u}_1-t \varphi\right)^+\right\|_2}, \ldots,\frac{\left(\hat{u}_{i-1}-t \varphi\right)^{+}}{\left\| \left(\hat{u}_{i-1}-t \varphi \right)^+\right\|_2}, \frac{\left(\hat{u}_{i+1}-t \varphi\right)^{+}}{\left\| \left(\hat{u}_{i+1}-t \varphi \right)^+\right\|_2}, \ldots, \frac{\left(\hat{u}_k-t \varphi\right)^{+}}{\left\| \left(\hat{u}_k-t \varphi \right)^+\right\|_2},
\]
to obtain
\[
\left\langle \Delta\hat{u}_i +\lambda_{\bar{u}_i}\bar{u}_i- \sum_{j\neq i}^k\lambda_{\bar{u}_j}\bar{u}_j, \varphi \right\rangle \leq C\left(r^{N-1} + r\|\varphi\|_{1}+ r\|\varphi\|_{2}^2 + r\|\nabla\varphi\|^2_2 + r\|\varphi\|_{1}\|\nabla\varphi\|^2_2 + r\|\varphi\|_{2}^2\|\nabla\varphi\|^2_2\right) , 
\]
for every $i=1, \ldots, k$.
\end{proof}

In what follows, we present a rescaled version of the previous proposition that will be useful later on.

\begin{Proposition}\label{p:rescaled}
Let $(\bar{u}_1, \ldots, \bar{u}_k)$ be a minimizer of \eqref{main_func}. Consider $R>0$, $x_0\in \Omega$ and a sequence $(r_n)_{n\in \mathbb{N}}$ such that $B_{r_nR}(x_0) \Subset \Omega$. For each $i=1, \ldots, k$, define
\[
u_{i,n}(x) = \bar{u}_i(x_0 + r_nx) .
\]
Then, given a nonnegative $\varphi \in H_0^1(\Omega)$ with $\mbox{supp}(\varphi) \subseteq B_{R}$, we have
\begin{align}\label{eq:32}
\left\langle -\Delta\hat{u}_{i, n} - r_n^2\lambda_{\bar{u}_i}\bar{u}_{i, n} + r_n^2\sum_{j\neq i}\lambda_{\bar{u}_j}\bar{u}_{j, n}, \varphi\right\rangle & \geq - Cr_nR\left(R^{N-2} + r_nR^{-1}\|\varphi\|_{1} + r_n^2\|\varphi\|^2_{2} +\right. \\
& \left.+\|\nabla\varphi\|^2_{2}+r_n^N\|\varphi\|_{1}\|\nabla\varphi\|^2_{2} +r_n^N\|\varphi\|^2_{2}\|\nabla\varphi\|^2_{2}\right), \nonumber
\end{align}
in the distributional sense, where the constant $C$ comes from Proposition \ref{prop_03} and does not depend on $n$.    
\end{Proposition}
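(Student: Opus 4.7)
The plan is to derive \eqref{eq:32} from Proposition \ref{prop_03} by the change of variables $y = x_0 + r_n x$. Given a nonnegative $\varphi \in H^1_0(\Omega)$ with $\supp(\varphi) \subseteq B_R$, I set $\psi(y) := \varphi((y - x_0)/r_n)$, which is nonnegative and supported in $B_{r_nR}(x_0) \Subset \Omega$, and is therefore an admissible test function for Proposition \ref{prop_03} with radius $r = r_nR$.

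First I would record the elementary scalings
\begin{equation*}
\|\psi\|_1 = r_n^N \|\varphi\|_1, \qquad \|\psi\|_2^2 = r_n^N \|\varphi\|_2^2, \qquad \|\nabla \psi\|_2^2 = r_n^{N-2} \|\nabla\varphi\|_2^2,
\end{equation*}
and, using the chain-rule identity $\nabla u_{i,n}(x) = r_n(\nabla \bar{u}_i)(x_0 + r_n x)$ (together with the analogous one for $\hat{u}_{i,n}$), transfer the distributional pairings as
\begin{equation*}
\langle -\Delta \hat{u}_i, \psi\rangle = r_n^{N-2} \langle -\Delta \hat{u}_{i,n}, \varphi\rangle, \qquad \lambda_{\bar{u}_j} \int_{\Omega} \bar{u}_j\,\psi\,dy = r_n^N \lambda_{\bar{u}_j} \int u_{j,n}\, \varphi \,dx.
\end{equation*}
Combining these identities, the left-hand side of \eqref{inq1:Prop3.2} evaluated at $\psi$ equals $r_n^{N-2}$ times the left-hand side of \eqref{eq:32}, after extracting a common factor $r_n^2$ to pair the mass terms with the Laplacian on the same scale.

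Next I would substitute these scalings and $r = r_n R$ into the right-hand side of \eqref{inq1:Prop3.2}, obtaining the bound
\begin{equation*}
-C\bigl(r_n^{N-1}R^{N-1} + r_n^N\|\varphi\|_1 + r_n^{N+1}R\|\varphi\|_2^2 + r_n^{N-1}R\|\nabla\varphi\|_2^2 + r_n^{2N-1}R\|\varphi\|_1\|\nabla\varphi\|_2^2 + r_n^{2N-1}R\|\varphi\|_2^2\|\nabla\varphi\|_2^2\bigr).
\end{equation*}
Dividing the resulting inequality by $r_n^{N-2} > 0$ and factoring $r_nR$ out of each of these six terms yields precisely the right-hand side of \eqref{eq:32}, with the same universal constant $C$ as in Proposition \ref{prop_03}.

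There is no genuine obstacle here: the proof is essentially a bookkeeping exercise on powers of $r_n$. The only delicate point is the mismatch between the scaling of the gradient and the mass norms (a factor $r_n^{N-2}$ versus $r_n^N$), and this is precisely what produces the weights $R^{N-2}$, $r_nR^{-1}$, $r_n^2$ and $r_n^N$ inside the bracket on the right-hand side of \eqref{eq:32}, as well as the overall prefactor $r_nR$.
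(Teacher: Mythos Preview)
Your proposal is correct and follows essentially the same approach as the paper: define the rescaled test function $\psi(y)=\varphi((y-x_0)/r_n)$ (the paper calls it $\tilde\varphi$), record the norm scalings, apply Proposition \ref{prop_03} with $r=r_nR$, and divide by $r_n^{N-2}$ before factoring out $r_nR$. The bookkeeping you outline matches the paper's computation line by line.
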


\begin{proof}
Set $\varphi(x) = \tilde{\varphi}(x_0+r_nx)$, so that $supp(\tilde{\varphi}) \subset B_{r_nR}(x_0)$, and
\[
\|\tilde{\varphi}\|_{1} = r_n^N\|\varphi\|_{1}, \;\; 
\|\tilde{\varphi}\|^2_{2} = r_n^N\|\varphi\|^2_{2},  \;\;\|\nabla\tilde{\varphi}\|^2_{2} = r_n^{N-2}\|\nabla\varphi\|^2_{2}.
\]
We have
\begin{align*}
\int_{B_{R}}\nabla \hat{u}_{i,n}(x)\cdot\nabla\varphi(x) & = r_n^2\int_{B_{R}}\nabla \hat{u}_{i}(x_0+r_nx)\cdot\nabla\tilde{\varphi}(x_0+r_nx) \\
& = \dfrac{1}{r_n^{N-2}}\int_{B_{r_nR}(x_0)}\nabla \hat{u}_{i}(x)\cdot\nabla\tilde{\varphi}(x).
\end{align*}
Hence, from Proposition \ref{prop_03} we obtain
\begin{align*}
\int_{B_{R}}\nabla \hat{u}_{i,n}(x)\cdot\nabla\varphi(x) \geq & \; \dfrac{1}{r_n^{N-2}}\int_{B_{r_nR}(x_0)}\lambda_{\bar{u}_i}\bar{u}_i\tilde{\varphi} -\dfrac{1}{r_n^{N-2}}\sum_{j\neq i}\int_{B_{r_nR}(x_0)}\lambda_{\bar{u}_j}\bar{u}_j\tilde{\varphi}\\
& -\dfrac{C}{r_n^{N-2}}\left[(r_nR)^{N-1} + \|\tilde{\varphi}\|_{1} + r_nR\|\tilde{\varphi}\|^2_{2} + r_nR\|\nabla\tilde{\varphi}\|^2_{2}\right] \\
& - \dfrac{C}{r_n^{N-2}}\left[r_nR\|\tilde{\varphi}\|_{1} \|\nabla\tilde{\varphi}\|^2_{2} + r_nR\|\tilde{\varphi}\|^2_{2} \|\nabla\tilde{\varphi}\|^2_{2}\right] \\  
= & \; \dfrac{1}{r_n^{N-2}}\int_{B_{r_nR}(x_0)}\lambda_{\bar{u}_{i}}\bar{u}_{i,n} \left(\frac{x-x_0}{r_n}\right)\varphi\left(\frac{x-x_0}{r_n}\right) \\
& -\dfrac{1}{r_n^{N-2}}\sum_{j\neq i}\int_{B_{r_nR}(x_0)}\lambda_{\bar{u}_{j}}\bar{u}_{j,n} \left(\frac{x-x_0}{r_n}\right)\varphi\left(\frac{x-x_0}{r_n}\right) \\
& - Cr_nR^{N-1} - \dfrac{C}{r_n^{N-2}}\left(r_n^N\|\varphi\|_{1} + r_n^{N+1}R\|\varphi\|^2_{2} + r_n^{N-1}R\|\nabla\varphi\|^2_{2} \right)   \\
& - \dfrac{C}{r_n^{N-2}}\left(r_n^{2N-1}R\|\varphi\|_{1}\|\nabla\varphi\|^2_{2} + r_n^{2N-1}R\|\varphi\|^2_{2}\|\nabla\varphi\|^2_{2}\right) \\
= & \; r_n^2\int_{B_{R}}\lambda_{\bar{u}_i}\bar{u}_{i,n}\varphi - r_n^2\sum_{j\neq i}\int_{B_{R}}\lambda_{\bar{u}_j}\bar{u}_{j,n}\varphi - Cr_nR^{N-1} - Cr_n^2\|\varphi\|_{1} - Cr_n^3R\|\varphi\|^2_{2} \\
& -Cr_nR\|\nabla\varphi\|^2_{2} - Cr_n^{N+1}R\|\varphi\|_{1}\|\nabla\varphi\|^2_{2} - Cr_n^{N+1}R\|\varphi\|^2_{2}\|\nabla\varphi\|^2_{2}, 
\end{align*}
which implies \eqref{eq:32}.   
\end{proof}

The next proposition shows the intuitive fact that the information provided by deformations is stronger when the test functions $\varphi$ do not alter the measure of the positive sets.

\begin{Proposition}\label{prop_04}
Let $i \in \{1, \ldots, k\}$ and $B \subset \Omega$ be an open ball such that $|B\cap\{\hat{u}_i = 0\}| = 0$. Then
\begin{equation}\label{problinearineq}
-\Delta\hat{u}_i \geq \lambda_{\bar{u}_i}\bar{u}_i - \sum_{j=1, j\neq i}^k\lambda_{\bar{u}_j}\bar{u}_j \;\; \mbox{ in } \;\; B.
\end{equation}
\end{Proposition}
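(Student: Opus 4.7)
My plan is to revisit the deformation argument of Proposition \ref{prop_03} and exploit the hypothesis to strip away two sources of error: the volume-penalty contribution $\mu|\Omega_\varphi|$, and the crude estimates responsible for the non-vanishing $\|\varphi\|_{1}$ term in \eqref{inq1:Prop3.2}. The geometric point is that $\{\hat u_l=0\}$ coincides, up to a set of measure zero, with the empty region $\{\bar u_1=\cdots=\bar u_k=0\}$ for every index $l$, so the hypothesis tells us this empty region has measure zero inside $B$.

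Fix $\varphi\in C_c^\infty(B)$, $\varphi\ge 0$, and for small $t>0$ consider the same competitor $\tilde u_t$ used in Proposition \ref{prop_03}, with $+t\varphi$ on the $i$-th component and $-t\varphi$ on each other component. A direct layer-cake computation based on the disjoint supports gives
\[
\{-t\varphi<\hat u_i<0\}\cap B \;=\; \bigsqcup_{j\neq i}\{0<\hat u_j<t\varphi\}\cap B \quad \text{(disjoint up to a null set)},
\]
since $\hat u_i(x)<0$ forces some $\bar u_j(x)>0$ with $j\neq i$, and under the hypothesis the set $\{\hat u_i=0\}\cap B$ is negligible. Hence the new mass on the $i$-th component is exactly compensated by the lost mass on the others, so that $\sum_l|\Omega_{\tilde u_{l,t}}|\le\sum_l|\Omega_{\bar u_l}|\le a$ and $J_\mu(\tilde u_t)$ carries no penalty.

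The second refinement is to perform an exact expansion in place of the crude bounds of Proposition \ref{prop_03}. Writing $A_l(t):=\int|\nabla(\hat u_l\pm t\varphi)^+|^2$ and $N_l(t):=\|(\hat u_l\pm t\varphi)^+\|_2^2$, a pointwise analysis—using that at each point of $B\cap\{\bar u_{l_0}>0\}$ exactly one deformed function is positive, with squared gradient $|\nabla(\bar u_{l_0}\pm t\varphi)|^2$, and that the empty region contributes nothing since it has measure zero in $B$—yields the exact identity
\[
\sum_l A_l(t) \;=\; \sum_l\lambda_{\bar u_l} + 2t\int_\Omega \nabla\hat u_i\cdot\nabla\varphi + t^2\|\nabla\varphi\|_2^2,
\]
and the individual expansion $N_l(t) = 1 + t\beta_l + O(t^2)$, where $\beta_i = 2\int\bar u_i\varphi$ and $\beta_j = -2\int\bar u_j\varphi$ for $j\ne i$; the $O(t^2)$ absorbs both the standard $t^2\int\varphi^2$ piece and the small contribution from the thin layers $\{0<\bar u_l<t\varphi\}$, whose measure tends to $0$ as $t\to 0^+$ by monotone convergence.

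Combining the minimality inequality $\sum_l\lambda_{\bar u_l}\le\sum_l A_l(t)/N_l(t)$ (valid by Proposition \ref{prop_pen} and volume preservation) with $1/N_l(t)=1-t\beta_l+O(t^2)$ and the continuity $A_l(t)\to\lambda_{\bar u_l}$ (which controls the cross term $\sum_l (A_l-\lambda_{\bar u_l})\beta_l$), dividing by $2t$ and letting $t\to 0^+$ gives
\[
0 \;\le\; \int_\Omega \nabla\hat u_i\cdot\nabla\varphi \;-\; \lambda_{\bar u_i}\int_\Omega\bar u_i\varphi \;+\; \sum_{j\ne i}\lambda_{\bar u_j}\int_\Omega\bar u_j\varphi
\]
for every nonnegative $\varphi\in C_c^\infty(B)$, which is the claimed distributional inequality. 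The main obstacle is the set identity in the second paragraph that provides volume preservation: it is the only place the hypothesis is used, and once it is in hand the remaining work is a careful, but essentially routine, expansion.
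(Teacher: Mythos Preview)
Your proof is correct and follows essentially the same route as the paper's: the same deformation $\tilde u_t$, the same volume-preservation step exploiting $|B\cap\{\hat u_i=0\}|=0$, the same pointwise identity $|\nabla(\hat u_i+t\varphi)^+|^2+\sum_{j\neq i}|\nabla(\hat u_j-t\varphi)^+|^2=|\nabla(\hat u_i+t\varphi)|^2$, and the same first-order expansion of the Rayleigh quotients. The only cosmetic differences are that the paper handles volume preservation by the one-line bound $\sum_l|\Omega_{\tilde u_{l,t}}\cap B|\le|B|=\sum_l|\Omega_{\bar u_l}\cap B|$ (disjoint supports) rather than your layer-cake set identity, and that the gradient identity actually holds pointwise everywhere (even on the empty region, where both sides equal $t^2|\nabla\varphi|^2$), so the hypothesis is not needed there---it is used only for volume preservation.
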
 
 
\begin{proof}  
Without loss of generality, we prove \eqref{problinearineq} for $i= 1$. Let $\varphi \in C_c^\infty(B)$ be a nonnegative function and define, once again, the auxiliary deformations
\[
\tilde{u}_t=\left(\tilde{u}_{1,t}, \ldots, \tilde{u}_{k,t}\right)=\left(\frac{\left(\hat{u}_1+t \varphi\right)^{+}}{\left \|\left(\hat{u}_1+t \varphi\right)^{+}\right\|_2}, \frac{\left(\hat{u}_2-t \varphi \right)^{+}}{\left\|\left(\hat{u}_2-t \varphi\right)^+\right\|_2}, \ldots, \frac{\left(\hat{u}_k-t \varphi\right)^{+}}{\left\| \left(\hat{u}_k-t \varphi \right)^+\right\|_2}\right) ,
\]
with $t \in (0,1)$ sufficiently small. Unless stated, all the $L^2$ norms are taken in $\Omega$. From Lemma \ref{lemma:expansion_L^2}, we obtain
\begin{equation} \label{u_t-estimates}
\dfrac{1}{\|(\hat{u}_1 + t\varphi)^+\|_2^2} = 1 - 2t\int_B\bar{u}_1\varphi + o(t) \quad \text{ and } \quad \dfrac{1}{\|(\hat{u}_i - t\varphi)^+\|_2^2} = 1 + 2t\int_B\bar{u}_i\varphi + o(t).
\end{equation} 
By Lemma \ref{def-complexa}, it follows $u_{i,t} \cdot u_{j, t}\equiv 0$  for all $i \not= j$ and, by the assumption $|B\cap\{\hat{u}_i = 0\}| = 0$, we have $\sum_{i=1}^{k}|\Omega_{\bar {u}_i} \cap B|=|B|$. Therefore
\begin{align*}
\sum_{i=1}^{k}|\Omega_{u_{i,t}}|
&= \sum_{i=1}^{k}|\Omega_{u_{i,t}}\cap B | + {\sum}_{i=1}^{k}|\Omega_{u_{i,t}}\cap (\Omega \setminus B) | \leq |B| + \sum_{i=1}^{k}|\Omega_{\bar {u}_i}\cap (\Omega\setminus B)|\\
&= \sum_{i=1}^{k}|\Omega_{\bar{u}_i} \cap B|+  \sum_{i=1}^{k}|\Omega_{\bar{u}_i}\cap (\Omega\setminus B)| = \sum_{i=1}^{k}|\Omega_{\bar{u}_i}|\leq a.
\end{align*}

By combining \eqref{u_t-estimates}, the identity $ |\nabla(\hat{u}_1 + t \varphi)^+|^2 + \sum_{i=2}^{k}|\nabla(\hat{u}_i - t \varphi)^+|^2 = |\nabla(\hat{u}_1 + t \varphi)|^2$, the fact that $\varphi$ is supported in $B$ and $(\hat{u}_1 + t\varphi)^+=\bar{u}_1$ in $\Omega\setminus B$, $(\hat{u}_i - t\varphi)^+=\bar u_i$ in $\Omega \setminus B$ for $i>1$, we obtain
\begin{multline*}
J\left(\tilde{u}_{1,t}, \ldots, \tilde{u}_{k,t}\right)
= \int_{B}|\nabla(\hat{u}_1 + t \varphi)|^2 + \sum_{i=1}^{k}\int_{\Omega \setminus B} |\nabla \bar{u}_i|^2 - 2t \int_{B} \bar{u}_1\varphi \int_{B} |\nabla (\hat{u}_1 + t\varphi)^+|^2 \\
- 2t \int_{B} \bar{u}_1\varphi \int_{\Omega \setminus B} |\nabla \bar{u}_1|^2 +  2t \sum_{i=2}^{k}\int_{B} \bar{u}_i\varphi \int_{B} |\nabla (\hat{u}_i - t\varphi)^+|^2 + 2t  \sum_{i=2}^{k}\int_{B} \bar{u}_i\varphi \int_{\Omega \setminus B} |\nabla \bar{u}_i|^2 + o(t).
\end{multline*}
Since 
\[
\int_{B} |(\hat{u}_1 + t\varphi)^+|^2 \rightarrow \int_{B} |\nabla \bar{u}_1|^2\quad
 \text{ and } \quad \int_{B} |(\hat{u}_i - t\varphi)^+|^2 \rightarrow \int_{B} |\nabla \bar{u}_i|^2\: \mbox{ for all}  \: i>1
\]
as $t$ approaches to $0$, we deduce that
\begin{align*}
J\left(\tilde{u}_{1,t}, \ldots, \tilde{u}_{k,t}\right)
=& 2t \int_{B}\nabla\hat{u}_1 \cdot \nabla \varphi + \sum_{i=1}^{k}\int_{B} |\nabla \bar{u}_i|^2 + \sum_{i=1}^{k} \int_{\Omega\setminus B} |\nabla \bar{u}_i|^2 - 2t \int_{B} \bar{u}_1\varphi \int_{B} |\nabla \bar{u}_1|^2 \\
&- 2t \int_{B} \bar{u}_1\varphi \int_{\Omega \setminus B} |\nabla \bar{u}_1|^2 +  2t \sum_{i=2}^{k}\int_{B} \bar{u}_i\varphi \int_{B} |\nabla \bar{u}_i|^2 + 2t \sum_{i=2}^{k}\int_{B} \bar{u}_i\varphi \int_{\Omega \setminus B} |\nabla \bar{u}_i|^2 + o(t)\\
=& 2t \int_{B}\nabla\hat{u}_1\cdot \nabla \varphi+ \sum_{i=1}^{k}\int_{\Omega} |\nabla \bar{u}_i|^2 - 2t \int_{B} \bar{u}_1\varphi \int_{\Omega} |\nabla \bar{u}_1|^2 \\
&+  2t \sum_{i=2}^{k} \int_{B} \bar{u}_i\varphi \int_{\Omega}|\nabla \bar{u}_i|^2  + o(t).
\end{align*}
At this point, we use the fact that $(\bar{u}_1, \ldots, \bar{u}_k)$ is a minimizer of $J$; in addition, we denote $\lambda_{\bar {u}_i}:= \int_{\Omega}|\nabla \bar{u}_i|^2$. By passing to the limit as $t \to 0$, to get the following inequality
\[
\int_B \nabla\hat{u}_1\cdot \nabla\varphi - \lambda_{\bar{u}_1}\int_B\bar{u}_1\varphi + \sum_{i=2}^{k} \lambda_{\bar{u}_i}\int_B\bar{u}_i\varphi \geq 0,
\]
which finishes the proof.
\end{proof}

At this point, we introduce some auxiliary results. For $w\in H^1_0(\Omega)$, let $\displaystyle \lambda_w:=\int_\Omega |\nabla w|^2$.  For $(u_1,\ldots, u_k)\in H^1_0(\Omega; \R^k)$, define
\[
\hat u_i:= u_i-\sum_{j\neq i} u_j,\qquad i=1,\ldots, k.
\]
Then, as in \cite{ContiTerraciniVerziniAsymptotic,ContiTerraciniVerziniOPP,ContiTerraciniVerziniVariation}, given an open set $\mathcal{A}\subset \Omega$ and $\lambda_1,\ldots, \lambda_k>0$, set
\begin{align}\label{S-class}
\mathcal{S}_{\lambda_1,\ldots,\lambda_k}(\mathcal{A}):=&\Bigl\{ \left(w_1, \ldots, w_k\right) \in H^1(\mathcal{A}; \R^k): w_i \geq 0,\  w_i \cdot w_j=0 \text { if } i \neq j \text{ in } \Omega \Bigr. \\
&\Bigl.  \;\;\; -\Delta w_i \leq \lambda_i w_i ,\quad -\Delta \hat w_i \geq \lambda_{i}w_i-\sum_{j \neq i} \lambda_{j}w_j \text { in } \mathcal{A} \text { in the distributional sense}\Bigr\} \text {. }\nonumber
\end{align}

\begin{Lemma}\label{lemma:classeS} Let $\mathcal{A}\subset \Omega$ be an open set and $\lambda_1,\ldots, \lambda_k>0$. Take $(u_1,\ldots, u_k)\in \mathcal{S}_{\lambda_1,\ldots,\lambda_k}(\mathcal{A})\setminus \{(0,\ldots, 0)\}$. Then $u_i\in C^{0,1}_{loc}(\mathcal{A})$, and
\[
-\Delta u_i=\lambda_i u_i \text{ in the open set } \{u_i>0\}.
\]
In addition,
\[
|\{x\in \Omega:\ u_i(x)=0\ \text{ for } i=1,\ldots, k\}|=0.
\]
\end{Lemma}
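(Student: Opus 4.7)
My plan is to treat the three assertions in succession, following the template developed by Conti, Terracini and Verzini for segregated configurations \cite{ContiTerraciniVerziniAsymptotic,ContiTerraciniVerziniOPP,ContiTerraciniVerziniVariation}.

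First I would derive the Euler--Lagrange equation on the positivity sets. On the open set $\{u_i>0\}$, the orthogonality $u_iu_j\equiv 0$ combined with $u_j\geq 0$ forces $u_j\equiv 0$ for every $j\neq i$, so that $\hat u_i\equiv u_i$ there. The lower differential inequality in the definition of $\mathcal{S}_{\lambda_1,\ldots,\lambda_k}(\mathcal{A})$ then reduces distributionally to $-\Delta u_i\geq \lambda_iu_i$ on $\{u_i>0\}$, and combined with the opposite inequality (valid on all of $\mathcal{A}$) this yields the equality $-\Delta u_i=\lambda_iu_i$. Standard elliptic regularity then promotes $u_i$ to be smooth on its positivity set.

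Local Lipschitz regularity is the heart of the lemma. The subsolution property $-\Delta u_i\leq \lambda_iu_i$ together with $u_i\geq 0$ gives $u_i\in L^\infty_{\mathrm{loc}}(\mathcal{A})$ via Moser iteration, and interior elliptic estimates applied to the equation obtained in the previous step provide gradient bounds away from the free boundary $\partial\{u_i>0\}$. The main difficulty, and the principal obstacle of the proof, is controlling $|\nabla u_i(x_0)|$ uniformly at free boundary points. I would apply the Alt--Caffarelli--Friedman/Caffarelli--Jerison--Kenig monotonicity formula to the disjointly supported pair $\hat u_i^+=u_i$, $\hat u_i^-=\sum_{j\neq i}u_j$, each being a nonnegative subharmonic subsolution modulo the zeroth-order terms $\lambda_i u_i$ and $\sum_{j\neq i}\lambda_j u_j$. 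The class inequalities are precisely what is needed to make the product functional
\[
\Phi(r)=\frac{1}{r^4}\int_{B_r(x_0)}\frac{|\nabla \hat u_i^+|^2}{|x-x_0|^{N-2}}\,dx\int_{B_r(x_0)}\frac{|\nabla \hat u_i^-|^2}{|x-x_0|^{N-2}}\,dx
\]
almost monotone, up to an error vanishing as $r\to 0$. A Campanato-type argument then translates a uniform bound on $\Phi$ into the desired pointwise Lipschitz estimate.

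For the final assertion, let $Z:=\{x\in\mathcal{A}:u_i(x)=0\text{ for every }i\}$ and suppose, for contradiction, that $|Z|>0$. Pick a Lebesgue density point $x_0$ of $Z$ and a sequence $\rho_n\downarrow 0$, and set $M_n:=\sup_{B_{2\rho_n}(x_0)}\max_i u_i$. By the Lipschitz bound from the previous step the rescalings $v_{i,n}(x):=u_i(x_0+\rho_n x)/M_n$ are uniformly bounded and Lipschitz on every compact set; extracting a subsequence yields a limit $(v_1,\ldots,v_k)\in\mathcal{S}_{0,\ldots,0}(\mathbb{R}^N)$, since the rescaled differential inequalities involve coefficients $\rho_n^2\lambda_i\to 0$. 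The normalisation gives $\sup_{B_2}\max_i v_i=1$, while being a density point of $Z$ is scale-invariant, so $0$ remains a density point of the common nodal set of $(v_i)$. Combining this with the fact that each $v_i$ is harmonic on $\{v_i>0\}$, a Liouville/unique-continuation argument -- already present in the CTV analysis -- forces $v_i\equiv 0$ for every $i$, contradicting the normalisation.
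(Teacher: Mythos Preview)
The paper does not prove this lemma from scratch; it simply invokes \cite[Theorem~8.3]{ContiTerraciniVerziniVariation} for the Lipschitz regularity together with the equation on the positivity set, and \cite[Corollary~8.5]{TavaresTerracini1} for the zero-measure claim. Your proposal instead attempts to sketch the content of those cited results.

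For the first two assertions your outline is faithful to the Conti--Terracini--Verzini approach: the two differential inequalities in the definition of $\mathcal{S}_{\lambda_1,\ldots,\lambda_k}$ combine on $\{u_i>0\}$ to yield the equation, and the Caffarelli--Jerison--Kenig monotonicity formula applied to the disjointly supported pair $(\hat u_i^+,\hat u_i^-)$ is indeed the mechanism behind the Lipschitz estimate at free-boundary points. One minor remark on ordering: the set $\{u_i>0\}$ is only known to be open once continuity has been established, so the equation on the positivity set should logically come \emph{after} the regularity step, not before.

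Your blow-up argument for the zero-measure assertion, however, has a genuine gap. Nothing in your setup excludes the possibility that the chosen Lebesgue density point $x_0$ lies in the \emph{interior} of $Z$; in that case $M_n=0$ for all small $n$ and the rescaling $v_{i,n}=u_i(x_0+\rho_n\,\cdot)/M_n$ is undefined. Even when $M_n>0$, your claim that the $v_{i,n}$ are \emph{uniformly} Lipschitz on compact sets requires the non-degeneracy estimate $M_n\geq c\rho_n$ for some $c>0$, which you have not justified: the Lipschitz bound on $u_i$, combined with $u_i(x_0)=0$, only yields the upper bound $M_n\leq 2L\rho_n$, not the needed lower bound. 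In the Tavares--Terracini analysis both issues are resolved through an Almgren-type frequency formula and a careful study of the free boundary; this is precisely the substance of the result the paper cites, and it cannot be bypassed by a bare density-point blow-up.
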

\begin{proof}
The first conclusion follows from \cite[Theorem 8.3]{ContiTerraciniVerziniVariation}. The last sentence is a consequence of \cite[Corollary 8.5]{TavaresTerracini1}, taking therein $f_i(s):=\lambda_i s$.
\end{proof}

\begin{Remark}
For the problem without measure constraint \eqref{OPPclassical} (i.e., where the partition exhausts the whole $\Omega$), minimizers of the associated weak formulation belong to the class $\mathcal{S}_{\lambda_1,\ldots, \lambda_k}(\Omega)$ for some $\lambda_1,\ldots, \lambda_k>0$, see \cite[Lemma 2.1]{ContiTerraciniVerziniOPP}. Therefore, Proposition \ref{prop_04} shows that, in a region where the zero set has null measure, we are in the same situation, whereas Proposition \ref{prop_03} covers the  general case. The right hand side in \eqref{inq1:Prop3.2} can be seen as an error term, and in some sense allows to capture the transition from the positivity set $\{u_i>0\}$ to an empty region where $u_i\equiv 0$.
\end{Remark}

The following is a Liouville type result.
\begin{Lemma}\label{lemma:Liouville}
Let $(u_1,\ldots,u_k)\in H^1_{loc}(\R^N)\cap{L^\infty(\R^N)}$ nonnegative functions such that $u_i\cdot u_j\equiv 0$ for all $i\neq j$ and 
\[
-\Delta u_j\leq 0,\qquad -\Delta \hat{u}_j \geq 0 \qquad \text{ in the distributional sense in } \R^N, \quad \forall\, j.
\]
Then there exists $c\in \R$ and $i\in \{1,\ldots,k\}$ such that $u_i\equiv c$ and $u_j\equiv 0$ for $j\neq i$.
\end{Lemma}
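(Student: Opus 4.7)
The plan is to use Lemma \ref{lemma:classeS} to import regularity and then split into two cases depending on whether some component has strictly positive infimum. The hypothesis says exactly that $(u_1,\ldots,u_k)\in\mathcal{S}_{0,\ldots,0}(\R^N)$, so Lemma \ref{lemma:classeS} yields that each $u_i$ is locally Lipschitz on $\R^N$, harmonic on the open set $\{u_i>0\}$, and $|\{u_1=\cdots=u_k=0\}|=0$.

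First I would handle the easy case $\inf_{\R^N}u_{i_0}>0$ for some index $i_0$: then $u_{i_0}$ is strictly positive on $\R^N$, so segregation forces $u_j\equiv 0$ for every $j\neq i_0$. Hence $\hat u_{i_0}=u_{i_0}$, and the two inequalities read $-\Delta u_{i_0}\leq 0$ and $-\Delta u_{i_0}\geq 0$, so $u_{i_0}$ is harmonic and bounded on $\R^N$, hence equal to a constant $c>0$ by the classical Liouville theorem.

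In the remaining case $\inf_{\R^N}u_i=0$ for every $i$, my plan is to prove $u_i\equiv 0$ for all $i$ by induction on $k$. For $k=2$, the identity $\hat u_2=-\hat u_1$ makes the two hypotheses $-\Delta\hat u_1\geq 0$ and $-\Delta\hat u_2\geq 0$ collapse to $\Delta(u_1-u_2)=0$, so $u_1-u_2$ is bounded harmonic on $\R^N$ and hence equal to a constant $c$; combining $\inf u_i=0$ with the segregation $u_1u_2\equiv 0$ then forces $c=0$ and $u_1\equiv u_2\equiv 0$. For the inductive step $k\geq 3$, if some $u_i\equiv 0$ I would apply the induction hypothesis to the remaining $k-1$ components (the defining inequalities of $\mathcal{S}_{0,\ldots,0}$ are preserved under removing a zero component), and otherwise I would aim for a contradiction. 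Since the sets $\{u_i>0\}$ are open, pairwise disjoint, nonempty and cover $\R^N$ up to a null set, the connectedness of $\R^N$ together with the existence of at least two nonempty positivity sets produces a point $x_*\in\{u_1=\cdots=u_k=0\}$. At $x_*$ I would apply the Alt--Caffarelli--Friedman monotonicity formula to the segregated pair $(u_1,\,u_2+\cdots+u_k)$: Lipschitz regularity makes the nondecreasing functional $\Phi(\cdot,x_*)$ uniformly bounded, and the standard blow-down argument---using the global boundedness of the $u_i$'s to collapse the rescalings $u_i(x_*+R\cdot)/R$ to zero as $R\to\infty$---forces $\Phi\equiv 0$. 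Consequently on each ball $B_r(x_*)$ either $u_1$ or $u_2+\cdots+u_k$ is constant; since both vanish at $x_*$, one of them is identically zero, contradicting the nontriviality assumption.

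The main obstacle I expect is the blow-down rigidity step, i.e., ensuring that the monotone bounded quantity $\Phi$ is not just bounded but actually $\equiv 0$; this relies on the fact that any uniformly bounded entire ACF-type profile collapses under blow-down, a standard mechanism in the segregation literature. The remaining ingredients are the classical Liouville theorem, the segregation constraint, and Lemma \ref{lemma:classeS}.
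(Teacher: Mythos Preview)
Your approach is essentially correct but considerably more elaborate than the paper's. The paper's proof is three lines: it observes that the hypothesis places $(u_1,\ldots,u_k)$ in $\mathcal{S}_{0,\ldots,0}(B_R)$ for every $R$, invokes Lemma~\ref{lemma:classeS} to get continuity, and then applies Proposition~\ref{prop:Liouville} from the appendix (the Liouville theorem for bounded segregated subharmonic functions, quoted from \cite{NTTV1}) to conclude that all components but possibly one vanish identically. For the surviving component, say $u_1$, the two inequalities collapse to $\Delta u_1=0$, and classical Liouville gives $u_1\equiv c$. Your case split and induction on $k$ are effectively re-deriving Proposition~\ref{prop:Liouville} from scratch; since that proposition is already available in the paper, you should simply cite it.

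Regarding the gap you flagged: it is genuine as written. To force $\Phi(R)\to 0$ for the ACF functional of two bounded subharmonic segregated functions, one needs the pointwise estimate
\[
\int_{B_r(x_*)}\frac{|\nabla u|^2}{|x-x_*|^{N-2}}\,dx \;\leq\; \frac{C}{r^{N-1}}\int_{\partial B_r(x_*)} u^2\,d\sigma
\]
(valid for nonnegative subharmonic $u$ with $u(x_*)=0$), which gives each factor $J_u(r)\leq C\|u\|_\infty^2/r^2\to 0$. Merely knowing $u_i(x_*+R\cdot)/R\to 0$ in $L^\infty$ is not enough, because Caccioppoli only controls $\|\nabla u_R\|_{L^2(B_{1/2})}$ and not the singular weighted integral near the origin; without the boundary estimate above, one factor of $\Phi$ could in principle diverge while the other decays. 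So your heuristic is pointing in the right direction, but the mechanism is a growth-versus-boundedness dichotomy (exactly the content of \cite[Proposition~2.2]{NTTV1}) rather than a naive blow-down.
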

\begin{proof}
First of all, observe that $(u_1,\ldots, u_k)\in \mathcal{S}_{(0,\ldots, 0)}(B_R(0))$ for every $R>0$. Then, by Lemma \ref{lemma:classeS}, each $u_i$ is a continuous function. By Proposition \ref{prop:Liouville} in the appendix, since all components are continuous, belong to $H^1_{loc}(\R^N)$, and  are bounded, we have that all components except possibly one is nontrivial. Without loss of generality, assume that $u_2\equiv \ldots \equiv u_k\equiv 0$. Then, from the assumptions,
\[
-\Delta u_1\leq 0,\qquad 0\leq -\Delta \hat{u}_1=-\Delta u_1,
\]
hence $u_1$ is harmonic and bounded in $\R^N$, thus it is constant.
\end{proof}

\smallbreak

We are ready to prove that minimizers of \eqref{main_func} are continuous functions. In particular, this shows that $\Omega_{\bar{u}_i}={\bar u_i>0}$, $i=1, \ldots, k$, are open sets.

\begin{Proposition}\label{prop_cont}
Let $U=(\bar{u}_1,\ldots \bar{u}_k)$ be a minimizer of \eqref{main_func}. Then each $\bar{u}_i$ is a continuous functions in $\Omega$.
\end{Proposition}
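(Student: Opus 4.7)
The strategy I would follow is a blow-up plus Liouville rigidity argument, adapted from Conti-Terracini-Verzini to the present setting, where the inequality for $\hat u_i$ carries asymptotically vanishing error terms coming from Proposition \ref{prop_03}. Fix $x_0 \in \Omega$ and argue by contradiction: assume that some $\bar u_i$ is not continuous at $x_0$, so there exist $\varepsilon > 0$ and $r_n \downarrow 0$ with $\osc_{B_{r_n}(x_0)} \bar u_i \geq \varepsilon$. The plan is to analyze the blow-ups $u_{i,n}(x) := \bar u_i(x_0 + r_n x)$, identify their weak limit as an element of the class $\mathcal{S}_{0,\ldots,0}(\R^N)$, apply Lemma \ref{lemma:Liouville} to force the limit to be a constant concentrated in one component, and finally upgrade the convergence to a uniform one so as to contradict the oscillation bound.

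By Proposition \ref{prop_bound} the $u_{i,n}$ are uniformly bounded in $L^\infty$; a Caccioppoli estimate based on $-\Delta u_{i,n} \le r_n^2 \lambda_{\bar u_i} u_{i,n}$ gives uniform $H^1_{\mathrm{loc}}$-bounds. Up to subsequences, $u_{i,n} \rightharpoonup U_i$ weakly in $H^1_{\mathrm{loc}}(\R^N)$, strongly in $L^2_{\mathrm{loc}}$ and a.e., so that $U_i \ge 0$, $U_i U_j \equiv 0$ for $i \ne j$, and $-\Delta U_i \le 0$ in $\R^N$. For the opposite inequality I would test Proposition \ref{p:rescaled} against any nonnegative $\varphi \in C_c^\infty(B_R)$: once $R$ and $\varphi$ are fixed, the right-hand side of \eqref{eq:32} scales with a positive power of $r_n$, while the rescaled zero-order terms $r_n^2 \lambda_{\bar u_j} u_{j,n}$ also vanish, yielding $-\Delta \hat U_i \ge 0$ in $\R^N$. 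Hence $(U_1,\ldots,U_k) \in \mathcal{S}_{0,\ldots,0}(\R^N)$, and Lemma \ref{lemma:Liouville} produces an index $i_0$ and a constant $c \ge 0$ with $U_{i_0} \equiv c$ and $U_j \equiv 0$ for $j \ne i_0$.

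The remaining and hardest step is to upgrade the $L^2_{\mathrm{loc}}$-convergence to uniform convergence on compacts. For each $j \ne i_0$, $u_{j,n}$ is a nonnegative subsolution whose right-hand side vanishes in $L^\infty$ and whose $L^2$-norm tends to zero, so the local maximum principle yields $u_{j,n} \to 0$ uniformly on compacts. For $u_{i_0,n}$, applying the local maximum principle to the Kato subsolution $(u_{i_0,n} - c)^+$ gives $u_{i_0,n} \le c + o(1)$ uniformly. The matching lower bound comes from observing that the asymptotic inequality $-\Delta \hat u_{i_0,n} \ge -\epsilon_n$ inherited from Proposition \ref{p:rescaled} makes $(c - \hat u_{i_0,n})^+$ an approximate Kato subsolution with vanishing right-hand side and $L^2$-norm (since $\hat u_{i_0,n} \to c$ in $L^2_{\mathrm{loc}}$); a second application of the local maximum principle then gives $\hat u_{i_0,n} \ge c - o(1)$ uniformly, which combined with the already-uniform smallness of $\sum_{j \ne i_0} u_{j,n} = u_{i_0,n} - \hat u_{i_0,n}$ delivers $u_{i_0,n} \ge c - o(1)$ uniformly as well. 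Thus $u_{i,n} \to U_i$ uniformly on $B_1$ for every $i$, translating back to $\osc_{B_{r_n}(x_0)} \bar u_i \to 0$, contradicting the assumption.

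The main obstacle is precisely this last upgrade to uniform convergence: only the subsolution side is directly available for each $u_{i,n}$, while the complementary ``supersolution'' side for $\hat u_{i_0,n}$ must be extracted asymptotically from Proposition \ref{p:rescaled}, which requires careful bookkeeping of the vanishing distributional error terms that encode the volume constraint.
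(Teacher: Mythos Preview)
Your overall blow-up + Liouville strategy is the one the paper uses, and your argument up through identifying the limit as an element of $\mathcal{S}_{0,\ldots,0}(\R^N)$ via Lemma~\ref{lemma:Liouville} is essentially correct and matches the paper. The gap is in the final upgrade to uniform convergence, specifically the lower bound for $u_{i_0,n}$ when the limiting constant $c$ could be positive.

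Your claim that Proposition~\ref{p:rescaled} yields ``$-\Delta \hat u_{i_0,n} \ge -\epsilon_n$'' with $\epsilon_n \to 0$, and hence that $(c - \hat u_{i_0,n})^+$ is an approximate Kato subsolution amenable to the local maximum principle, is not justified. The right-hand side in \eqref{eq:32} contains the terms $-C r_n R^{N-1}$ (independent of $\varphi$) and $-C r_n R \|\nabla\varphi\|_2^2$; neither scales like $\int_\Omega \varphi$, so the inequality cannot be rewritten as a distributional lower bound $-\Delta \hat u_{i_0,n} \ge g_n$ for any function or measure $g_n$. Without such a bound, Kato's inequality has no input, and the De Giorgi/Moser iteration behind the local maximum principle cannot be run for $(c - \hat u_{i_0,n})^+$: the cutoffs used there have $\|\nabla\varphi\|_2^2$ bounded away from zero. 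These error terms are precisely the price of the volume constraint (the $\mu|\Omega_\varphi|/t$ contribution in the proof of Proposition~\ref{prop_03}, with $t=r$) together with the second-order remainder of the Dirichlet energy, and they are genuinely not of distributional type.

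The paper sidesteps this obstacle by a dichotomy at $x_0$. Either $|B_{r_n}(x_0) \cap \{U=0\}| = 0$ for some $n$, in which case Proposition~\ref{prop_04} gives the \emph{exact} $\mathcal{S}$-class inequalities on $B_{r_n}(x_0)$ and Lemma~\ref{lemma:classeS} delivers local Lipschitz continuity directly, with no blow-up needed. Or $|B_{r_n}(x_0) \cap \{U=0\}| > 0$ for all $n$; then each ball contains a point $y_n$ with $\hat u_1(y_n) = 0$, and the mean-value identity of Proposition~\ref{prop_Con}, combined with the measure estimate $\sigma_1(B_r(y_n)) \le C r^{N-1} + \sum_{i>1}\sigma_i(B_{2r}(y_n))$ obtained by testing Proposition~\ref{prop_03} against standard cutoffs, forces the limiting constant to be $c = 0$. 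Once $c=0$ there is no lower bound to prove: $u_{i_0,n}\ge 0$, and the upper bound via the local maximum principle for subsolutions (which you stated correctly) finishes the argument. Your proposal is missing both the case split and, in the second case, the anchoring at the zero point $y_n$ that replaces the unavailable supersolution information.
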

 
\begin{proof}  We recall that, by Proposition \ref{prop_bound}, each component $\bar u_{i}$ is defined at \emph{every} point. Given $x_0 \in \Omega$, we are going to prove the continuity of each $\bar u_{i}$ at $x_0$. Take a sequence $(x_n)_{n \in \mathbb{N}} \subset \Omega$ such that $x_n \to x_0$ and set $r_n := |x_0 - x_n| \to 0$. We split the proof into two cases:

\bigskip

\noindent {\it Case 1}: Suppose that, for some $n$, we have $|B_{r_n}(x_0)\cap\{U=0\}| = 0$. Then, from Propositions \ref{prop_bound} and \ref{prop_04}, 
\[
(\bar u_1,\ldots, \bar u_k)\in \mathcal{S}_{\lambda_{\bar u_1},\ldots, \lambda_{\bar u_k}}(B_{r_n}(x_0)).
\]
Then, by Lemma \ref{lemma:classeS}, we have $\bar u_i\in C^{0,1}_{loc}(B_{r_n}(x_0))$. In particular,  $\bar{u}_i$ is continuous at $x_0$.  

\bigskip

\noindent {\it Case 2}: Suppose that, for all $n$, $|B_{r_n}(x_0)\cap\{U=0\}| > 0$.  We introduce the auxiliary functions, for $i=1, \ldots, k$, 
\[
\bar{u}_{i,n}(x) = \bar{u}_i(x_0 +r_nx), \;\; \mbox{ with } \;\; x \in \R^N,
\]
where we are considering the extension of $\bar u_i$ by zero to $\R^N\setminus \Omega$. In particular, from Proposition \ref{prop_bound} and since $\bar u_i\in H^1_0(\Omega)$ is nonnegative in $\Omega$,
\begin{equation}\label{weak_solution}
- \Delta {\bar u}_{i, n}  \leq \lambda_{\bar{u}_i}r^2_n {\bar u}_{i, n} \quad \text{in} \quad \R^N
\end{equation}
in the distributional sense. Note that, since ${\bar u}_i$ is bounded, we have that ${\bar u}_{i, n}$ is uniformly bounded in $i$ and $n$.  Our aim is to show that $\bar u_{i,n}\to 0$ in $L^\infty_{loc}(\R^N)$, which proves the continuity of $\bar u_{i}$ (and shows that $\bar u_i(x_0)=0$). We split the proof of this in several steps.

Step 1. We show that, for each $i=1,\ldots, k$, there exist constants $c_1,\ldots, c_k\in \R$, where at most one is nonzero, such that
\begin{equation}\label{eq:Case2_Step1-a}
\bar{u}_{i,n} \rightharpoonup  c_i \quad \text{ weakly in $H_{loc}^1(\R^N)$, strongly in $L^2_{loc}(
\R^N)$ for each $i = 1, \ldots, k$.}
\end{equation}
Given $r< R' < R$, take $0\leq \varphi \in C^{\infty}(B_R)$   such that $\varphi \equiv 1 \: \text{in} \: B_r$ and $\varphi \equiv 0 \: \text{outside} \: B_{R'}$. From the definition of weak solution (with the test function  $ {\bar u}_{i, n} \varphi^2 \geq 0$) we get (since $\bar{u}_{i,n} \in H^1(\R^N)$)
\[
\int_{B_R} \nabla {\bar u}_{i, n} \cdot \nabla( {\bar u}_{i, n}\varphi^2) \leq \int_{B_R} \lambda_{\bar{u}_i}r^2_n  {{\bar u}_{i, n}}^2 \varphi^2 \leq C.
\]
Thus, 
\[ 
 \int_{B_R} |\nabla {\bar u}_{i, n}|^2 \varphi^2 \leq  C - 2 \int_{B_R}  {\bar u}_{i, n} \varphi\nabla   {\bar u}_{i, n} \cdot \nabla \varphi,
\]
which implies
\[ 
 \int_{B_R} |\nabla {\bar u}_{i, n}|^2 \varphi^2 \leq C + 2 \int_{B_R} {\bar u}_{i, n}^2 |\nabla\varphi|^2  + \frac{1}{2} \int_{B_R} |\nabla {\bar u}_{i, n}|^2 \varphi^2 \leq C + \frac{1}{2} \int_{B_R} |\nabla {\bar u}_{i, n}|^2\varphi^2
\]
and, therefore,
\[
\int_{B_r} |\nabla {\bar u}_{i, n}|^2 \leq C.
\]
From the bound above and the uniformly boundedness of $\bar{u}_{i,n}$, since $r$ is arbitrary there exists $\bar{u}_\infty=(\bar{u}_{1,\infty},\ldots, \bar{u}_{k,\infty})\in H^1_{loc}(\R^N)\cap L^\infty(\R^N)$ such that 
\[
\bar{u}_{i,n} \rightharpoonup  \bar{u}_{i, \infty} \quad \text{weakly in $H_{loc}^1(\R^N)$, strongly in $L^2_{loc}(
\R^N)$ for each $i = 1, \ldots, k$.}
\]

\smallbreak

Fix $R > 0$,  and let $n$ be large such that $B_{r_nR}(x_0) \Subset \Omega$.  Applying Proposition \ref{p:rescaled} to the functions $\bar{u}_{i,n}$, and by letting $n \to \infty$ in \eqref{eq:32}, we conclude that $\hat{u}_{i, \infty}$ solves
\[
-\Delta\hat{u}_{i, \infty} \geq 0 \;\;\mbox{ in }\;\;B_R.
\]
From the inequality above and \eqref{weak_solution} (by passing the limit as $r_n \to 0$), we can infer that $(\bar{u}_{1,\infty},\ldots, \bar{u}_{k,\infty})\in \mathcal{S}_{0,\ldots, 0}(B_R)$. Since $R > 0$ is arbitrary, we have that $(\bar{u}_{1,\infty},\ldots, \bar{u}_{k,\infty})$ satisfies the assumptions in Lemma \ref{lemma:Liouville}. Therefore, there exist constants $c_1, \ldots, c_k$ such that $\bar u_{i,\infty}\equiv c_i$,  at most one constant is nonzero and \eqref{eq:Case2_Step1-a} holds true.

Step 2. We now claim that 
\begin{equation}\label{eq:Case2_Step1}
\bar{u}_{i, n} \to c_i \quad \text{strongly in} \quad H^1_{loc}(B_R), \quad \forall \, \, i = 1, \ldots, k.
\end{equation}
In fact, by setting $\varphi$ as above and using $\bar{u}_{i, n} \varphi^2$ as a test function, we conclude that
\[
\int_{B_R} |\nabla \bar{u}_{i, n}|^2 \varphi^2 + 2 \int_{B_R} \bar{u}_{i, n} \varphi\nabla  \bar{u}_{i, n} \cdot \nabla \varphi \leq \int_{B_R} \lambda_{\bar u_i} r_n^2 \bar{u}_{i,n}^2 \varphi^2\to 0.
\]
On the other hand,
\begin{align*}
\int_{B_R} \bar{u}_{i, n} \varphi\nabla  \bar{u}_{i, n} \cdot \nabla \varphi  & = \int_{B_R}(\bar{u}_{i, n} - c_i)\varphi \nabla \bar{u}_{i, n}\cdot\nabla\varphi + \frac{1}{2}\int_{B_R}c_i \nabla \bar{u}_{i, n}\cdot\nabla(\varphi^2)\to 0
\end{align*}
by the weak convergence $\bar u_{i, n}\rightharpoonup c_i$ in $H^1(B_R)$ (which is strong in $L^2(B_R)$) and
\begin{align*}
\left|\int_{B_R}(\bar{u}_{i, n} - c_i)\varphi \nabla \bar{u}_{i, n}\cdot\nabla\varphi \right| \leq \left(\dfrac{1}{2}\|\varphi\|_\infty^2\|\nabla\varphi\|_2^2+\dfrac{1}{2}\|\nabla\bar{u}_{i, n}\|^2_{L^2(B_R)}\right)\|\bar{u}_{i, n} - \bar{u}_{i, \infty}\|_{L^2(B_R)}
\end{align*}
yields, by the definition of $\varphi$, to
\begin{equation}\label{eq_L2_limit}
    \int_{B_r} |\nabla \bar{u}_{i, n}|^2 \to 0, \quad \forall \,\, 0 < r < R,
\end{equation}
and the claim \eqref{eq:Case2_Step1} is proved.

Step 3. Suppose, without loss of generality, that $c_2=\ldots= c_k=0$. We show in this step that also $c_1=0$. In particular, $\bar u_{i,n}\to 0$ in $L^\infty_{loc}(\R^N)$ for every $i=1,\ldots, k$.

From the assumptions, we have $\bar{u}_{i,n} \to 0$ in $H^1_{loc}(B_R)$ for $i>1$ and, since $\bar{u}_{i,n}$ also satisfies \eqref{weak_solution},then  $\bar{u}_{i,n} \to 0$ in $L_{loc}^{\infty}(\R^N)$ by \cite[Theorem 8.17]{GiTr}. On the other hand, since $|B_{r_n}(x_0)\cap\{\hat{u}_1=0\}|>0$, we can take $y_n\in B_{r_n}(x_0)$ such that $\hat{u}_1(y_n)=0$. Write $y_n = x_0 + r_nz_n \in B_{r_n}(x_0)$, for some $z_n \in B_1$.

Now, for each large fixed $n$, take $r \leq  r_n$. Consider a test function $\varphi \in C_0^\infty(B_{2r}(y_n))$, such that $0 \leq \varphi \leq 1$ with $\varphi \equiv 1$ on $B_r(y_n)$ and $\|\nabla\varphi\|_{L^{\infty}(B_{2r}(y_n))} \leq C/r$. By using \eqref{eq:subsolution} once again, we see that each $\sigma_i := \Delta\bar{u}_i + \lambda_{\bar{u}_i}\bar{u}_i$ defines a positive measure. By Proposition \ref{prop_03} we infer that
\[
\sigma_1(B_r(y_n))  \leq  \langle\sigma_1, \varphi \rangle = \langle\sigma_1 - \sum_{i>1}\sigma_i, \varphi \rangle +\sum_{i>1}\langle\sigma_i, \varphi \rangle \leq  Cr^{N-1} + \sum_{i>1}\sigma_i(B_{2r}(y_n)).
\]
Therefore, since $\int_{B_r(y_n)}\lambda_{\bar{u}_i}\bar{u}_i\leq Cr^N$, for all $i=1, \ldots, k$ and $r\leq 1$,
\begin{align*}
\Delta\bar{u}_1(B_r(y_n)) & =  (\Delta\bar{u}_1 + \lambda_{\bar{u}_1}\bar{u}_1 - \lambda_{\bar{u}_1}\bar{u}_1)(B_r(y_n)) \\& \leq  Cr^{N-1} + \sum_{i>1}\sigma_i(B_{2r}(y_n))  \leq  C'r^{N-1} + \sum_{i>1}\Delta\bar{u}_i(B_{2r}(y_n)).
\end{align*}
By multiplying the inequality above by $r^{1-N}$ and integrating from $0$ to $r_n$, we obtain
\[
\int_{0}^{r_n}r^{1-N}\Delta\bar{u}_1(B_r(y_n))dr \leq Cr_n + \sum_{i>1}\int_0^{r_n}r^{1-N}\Delta\bar{u}_i(B_{2r}(y_n))dr.
\]

Now, we apply \eqref{eq_con} with $x_0 = y_n$ and $r = r_n$ to obtain (recall that $\hat{u}(y_n) = 0$)
\begin{align*}
C(N)\dashint_{\partial B_{r_n}(y_n)}\bar{u}_1 & = \int_{0}^{r_n}r^{1-N}\Delta\bar{u}_1(B_r(y_n))dr \leq Cr_n + \sum_{i>1}\int_0^{r_n}r^{1-N}\Delta\bar{u}_i(B_{2r}(y_n))dr \\
& \leq Cr_n + C(N)\sum_{i>1}\dashint_{\partial B_{2r_n}(y_n)}\bar{u}_i, 
\end{align*}
which leads to
\begin{equation}\label{mediaconv}
\dashint_{\partial B_1}\bar{u}_{1,n}(z_n + x) \leq Cr_n + C\sum_{i>1}\dashint_{\partial B_{2}}\bar{u}_{i,n}(z_n + x) \to 0,
\end{equation}
as $n$ goes to infinity (recall that $\bar{u}_{i,n} \to 0$ for all $i>1$). Up to a subsequence, we have $z_n \to z_\infty \in \overline{B}_1$ and $\bar{u}_{1,n}(z_n +x) \to c_1$ in $H^1(B_1)$ which implies strong convergence in $L^1(\partial B_1)$, and then
\[
\dashint_{\partial B_1}\bar{u}_{1,n}(z_n + x) \to c_1,
\]
and hence $c_1 = 0$, as wanted. The fact that also $\bar u_{1,n}\to 0$ in $L^\infty_{loc}(\R^N)$ is, again, a consequence of \cite[Theorem 8.17]{GiTr}.

\smallbreak

 Finally, from the the convergence $\bar{u}_{i,n} \to 0$ in $L^{\infty}_{loc}(\R^N)$, we obtain the continuity of each $\bar u_i$ at $x_0$, since $|\bar{u}_1(x)- \bar{u}_{1}(x_0)| \leq 2\|\bar{u}_{1,n}\|_{L^{\infty}(B_2)} \to 0$ for all $x\in B_{2r_n}(x_0)$ (and also $U(x_0)=0)$).\qedhere
\end{proof} 
 
\section{Lipschitz regularity of minimizers}\label{secLip}

Let $U := (\bar{u}_1, \ldots, \bar{u}_k) \in H_a$ be a minimizer of \eqref{main_func}, extended by zero in $\R^N\setminus \Omega$. Now, we introduce two quantities related to whether a point $x$ belongs or not to $\partial\Omega_{\bar{u}_i}$, for some $i=1, \ldots, k$. Define the multiplicity of a point $x\in \Omega$ as being
\[
m(x) := \#\{i \; ;\; |\Omega_{\bar{u}_i}\cap B_r(x)| > 0, \mbox{ for all } r > 0\},
\]
and 
\[
Z_{\ell}(U) = \{x \in \Omega \; ;\; m(x) \geq \ell\}.
\]

Consider the function $\Sigma: \bar{\Omega} \times (0, \infty) \to \mathbb{R}$ defined as
\[
\Sigma(x,r) := \dfrac{1}{r^N}\int_{B_r(x)}|\nabla U|^2, \quad \text{for} \quad (x,r) \in \bar{\Omega}\times (0,\infty).
\]
In order to prove the interior local Lipschitz regularity, it is enough to show that $\Sigma$ is bounded over $\Omega'\times(0,\infty)$, for every $\Omega'$ compactly contained in $\Omega$. So, fix such a set  $\Omega'$  and suppose, by contradiction, that $\Sigma$ is unbounded in $\Omega'\times (0, \infty)$. Then, there exist sequences $(x_n)_{n \in \mathbb{N}} \subset \Omega'$ and $r_n \to 0$ such that $B_{r_n}(x_n) \subset \Omega$ and
\begin{equation}\label{eq_cont}
 \lim_{n\to\infty}\dfrac{1}{r_n^N}\int_{B_{r_n}(x_n)}|\nabla U|^2 = +\infty.   
\end{equation}

In what follows, we present two technical lemmas that will be applied mainly to the sets $Z_\ell$. Their proofs can be found in \cite{ContiTerraciniVerziniVariation} but, for completeness, we also include them here. 

\begin{Lemma}\label{lemma:bbound}
Let $(x_n, r_n)$ be as in \eqref{eq_cont}. Then, there exists a sequence $r'_n \to 0$ as $n\to\infty$, with $(x_n, r_n')$ satisfying \eqref{eq_cont} and such that
\begin{equation}\label{eq_bbound}
\int_{\partial B_{r'_n}(x_n)}|\nabla U|^2 \leq \dfrac{N}{r'_n}\int_{B_{r'_n}(x_n)}|\nabla U|^2 \quad \text{for all} \quad n \in \mathbb{N}. 
\end{equation}
\end{Lemma}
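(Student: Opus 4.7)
The plan is to rephrase \eqref{eq_bbound} at a radius $r_n'$ as the monotonicity condition $f_n'(r_n') \leq 0$ for the function $f_n(r) := r^{-N}\int_{B_r(x_n)}|\nabla U|^2$, and then to produce such an $r_n'$ as a near-maximizer of $f_n$. Since $|\nabla U|^2 \in L^1(\Omega)$, the coarea formula shows that $f_n$ is absolutely continuous on $(0,\infty)$ with
\[
f_n'(r) = \frac{1}{r^N}\int_{\partial B_r(x_n)}|\nabla U|^2 - \frac{N}{r^{N+1}}\int_{B_r(x_n)}|\nabla U|^2
\]
for a.e.\ $r$, so $f_n'(r_n') \leq 0$ is exactly the desired inequality. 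Moreover, $f_n$ obeys the trivial uniform bound $f_n(r) \leq \tilde c_a/r^N$.

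Fix $\delta > 0$ small enough that $\overline{B_\delta(x)} \subset \Omega$ for every $x \in \Omega'$ (available since $\Omega' \Subset \Omega$), and pass to $n$ large so that $r_n < \delta$. Let $\tilde r_n \in [r_n,\delta]$ be a maximizer of the continuous function $f_n$ on this compact interval. Then $f_n(\tilde r_n) \geq f_n(r_n) \to \infty$ by \eqref{eq_cont}, while the bound $f_n(r) \leq \tilde c_a/r^N$ forces $\tilde r_n \to 0$ (otherwise $f_n(\tilde r_n)$ would stay bounded along a subsequence), so $\tilde r_n$ is eventually interior to $[r_n,\delta]$.

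Finally, since $\tilde r_n$ is a maximum, for every small $\eta > 0$,
\[
\int_{\tilde r_n}^{\tilde r_n + \eta} f_n'(s)\,ds \,=\, f_n(\tilde r_n + \eta) - f_n(\tilde r_n) \,\leq\, 0,
\]
so $\{s \in [\tilde r_n, \tilde r_n + \eta] : f_n'(s) \leq 0\}$ has positive Lebesgue measure. I pick $r_n'$ from this set inside $[\tilde r_n, \tilde r_n + 1/n]$, and by continuity of $f_n$ I shrink the window so that $f_n(r_n') \geq f_n(\tilde r_n) - 1$. Then $r_n' \to 0$, $f_n(r_n') \to \infty$ (so \eqref{eq_cont} holds at $(x_n,r_n')$), and $f_n'(r_n') \leq 0$ gives \eqref{eq_bbound}. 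The only (mild) obstacle is arranging smallness of $r_n'$, blow-up of $f_n(r_n')$, and nonpositivity of $f_n'(r_n')$ simultaneously; the near-maximum argument handles all three at once.
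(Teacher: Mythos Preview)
Your proof is correct and follows essentially the same approach as the paper: both observe that \eqref{eq_bbound} is equivalent to nonpositivity of the derivative of $r\mapsto r^{-N}\int_{B_r(x_n)}|\nabla U|^2$, and then locate an $r_n'\geq r_n$ where this holds while the quotient remains large. The paper takes $r_n':=\inf\{r\geq r_n: f(x_n,r)\leq 0\}$ directly, whereas you select $r_n'$ near a maximizer of $f_n$ on $[r_n,\delta]$---a minor variant that handles the a.e.\ nature of the derivative more carefully.
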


\begin{proof}
First, notice that
\begin{equation}\label{eq_27}
\dfrac{d}{dr}\left(\dfrac{1}{r^N}\int_{B_{r}(x_n)}|\nabla U|^2 \right) =  \dfrac{1}{r^N}\left(\int_{\partial B_{r}(x_n)}|\nabla U|^2 -  \dfrac{N}{r}\int_{B_{r}(x_n)}|\nabla U|^2\right) =: \dfrac{1}{r^N}f(x_n,r).
\end{equation}
Hence, it is enough to find a sequence $(r'_n)_{n \in \mathbb{N}}$ such that \eqref{eq_cont} and $f(x_n, r'_n) \leq 0$ holds true. Define $r'_n := \inf\{r \geq r_n : f(x_n,r) \leq 0\}$. Since $\Sigma(x_n, r) \to 0$ as $r \to \infty$ and $\Sigma(x_n, r_n)>0$, we infer that $\frac{d}{dr}\Sigma(x_n,r)\leq 0$ for some $r$ sufficiently large, hence $r_n'< \infty$ for all $n$. Moreover, since $\Sigma(x_n, r_n') \geq \Sigma(x_n, r_n) \to \infty$, we then infer that $r_n'\to 0$. Then, up to a finite number of indices, $B_{r_n'}(x_n) \subset \Omega$. Moreover, from the definition of $r'_n$, $f(x_n, r_n')\leq0$, that is, \eqref{eq_bbound} is verified. This finishes the proof.
\end{proof}

From the last lemma, since $\Sigma$ is not bounded over $\Omega'\times(0,\infty)$, then from now on we may assume the existence of sequences $(x_n)_{n \in \mathbb{N}} \subset \Omega'$, $r_n \to 0$ satisfying \eqref{eq_cont} and \eqref{eq_bbound}. 

\begin{Lemma}\label{lem_dist}
Let $A \subset \bar{\Omega}$ be such that $
\mbox{dist}(x_n, A) \leq C \, r_n,$ for all $n$, and assume that \eqref{eq_cont} holds true. Then, there exist sequences $(x'_n)_{n \in \mathbb{N}}\subset A$ and $r'_n\to 0$ such that $B_{r_n'}(x_n') \subset \Omega$ and $(x_n', r_n')$ satisfies \eqref{eq_cont}  and \eqref{eq_bbound}.
\end{Lemma}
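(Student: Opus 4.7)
The plan is to produce the desired pair in two moves: first translate the center to a nearby point of $A$ while enlarging the radius just enough to keep the blow-up in \eqref{eq_cont}, and then apply Lemma~\ref{lemma:bbound} to that new pair to additionally enforce \eqref{eq_bbound}. The monotonicity property of $\Sigma(x,\cdot)$ used in Lemma~\ref{lemma:bbound} does not move the center, so obtaining the right center up front is essential.

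More precisely, for each $n$ use the hypothesis $\dist(x_n,A)\le Cr_n$ to pick $x_n'\in A$ with $|x_n-x_n'|\le 2Cr_n$. Since $(x_n)_{n\in\mathbb{N}}\subset\Omega'\Subset\Omega$ and $r_n\to 0$, for every $n$ large enough one has $\dist(x_n',\partial\Omega)\ge \tfrac{1}{2}\dist(\Omega',\partial\Omega)>0$, so the balls centered at $x_n'$ with radii of order $r_n$ are contained in $\Omega$. Now set
\[
\tilde r_n:=(2C+1)r_n,
\]
so that $\tilde r_n\to 0$ and the triangle inequality yields $B_{r_n}(x_n)\subset B_{\tilde r_n}(x_n')\subset \Omega$ for all large $n$. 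Consequently
\[
\frac{1}{\tilde r_n^{N}}\int_{B_{\tilde r_n}(x_n')}|\nabla U|^2\;\ge\; \frac{1}{(2C+1)^{N}}\cdot\frac{1}{r_n^{N}}\int_{B_{r_n}(x_n)}|\nabla U|^2\;\longrightarrow\;+\infty,
\]
which shows that the pair $(x_n',\tilde r_n)$ also satisfies \eqref{eq_cont}.

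Finally, apply Lemma~\ref{lemma:bbound} to the sequence $(x_n',\tilde r_n)$. This produces radii $r_n'\ge \tilde r_n$ with $r_n'\to 0$ such that $(x_n',r_n')$ satisfies both \eqref{eq_cont} and \eqref{eq_bbound}, and $B_{r_n'}(x_n')\subset\Omega$ for large $n$ (the inclusion is automatic because $r_n'\to 0$ and $x_n'$ is at a uniformly positive distance from $\partial\Omega$ for large $n$). This completes the proof.

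The only potentially delicate point is ensuring that the enlargement from $r_n$ to $\tilde r_n$ still yields a blow-up, which forces $\tilde r_n/r_n$ to remain bounded; the explicit choice $\tilde r_n=(2C+1)r_n$ is dictated precisely by the triangle inequality $r_n+|x_n-x_n'|\le (2C+1)r_n$, and it is the tightest constant compatible with the hypothesis. Everything else is bookkeeping.
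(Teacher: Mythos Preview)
Your proof is correct and follows essentially the same approach as the paper: pick $x_n'\in A$ with $|x_n-x_n'|\le 2Cr_n$, enlarge the radius to $(2C+1)r_n$ so that the original ball is contained in the new one (hence \eqref{eq_cont} transfers), and then invoke Lemma~\ref{lemma:bbound} to adjust the radii so that \eqref{eq_bbound} also holds. Your treatment of the inclusion $B_{r_n'}(x_n')\subset\Omega$ is slightly more explicit than the paper's, but the argument is the same.
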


\begin{proof}
 By assumption, we can find $x'_n \in A$ such that $\mbox{dist}(x_n, x'_n) \leq 2Cr_n$. Now, set $r'_n :=(2C + 1)r_n$ and observe that $B_{r_n}(x_n) \subset B_{r'_n}(x'_n)$ and, for all $n$ sufficiently large, $B_{r_n'}(x'_n) \subset \Omega$. Hence
 \[
 \dfrac{(2C+1)^{-N}}{r_n^N}\int_{B_{r_n}(x_n)}|\nabla U|^2 \leq \dfrac{1}{(r'_n)^N}\int_{B_{r'_n}(x'_n)}|\nabla U|^2 = \Sigma(x_n', r_n'). 
 \]
 Since the left-hand side in the inequality above goes to infinity, the same holds for the right-hand side, hence \eqref{eq_cont} is satisfied. By eventually changing the radii $r_n'$ (recall Lemma \ref{lemma:bbound}) we may assume without loss of generality that  \eqref{eq_bbound} is also true.
\end{proof}

\begin{Remark}\label{rem_dist}
Notice that, for $\ell \geq 0$, if $m(x_n) = \ell$ and $\mbox{dist}(x_n, Z_{\ell+1}) < r_n$ for all $n$, then by Lemma \ref{lem_dist}, we can find sequences $(x'_n)_{n \in \mathbb{N}}\subset Z_{\ell+1}$ and $r'_n\to 0$  such that $B_{r_n'}(x_n') \subset \Omega$ for all $n$, with $(x_n', r_n')$ satisfying \eqref{eq_cont} and \eqref{eq_bbound}. In particular, $m(x'_n) \geq \ell + 1$ for all $n$ and $\Omega' \bigcup_{n \in \mathbb{N}}B_{r_n'}(x_n')\bigcup_{n \in \mathbb{N}}B_{r_n}(x_n)$ is compactly contained in $\Omega$.
\end{Remark}

Now, we aim to get a contradiction from with \eqref{eq_cont}. We achieve this by splitting the proof into cases, based on the quantity $m(x_n)$. We first treat the case $m(x_n) \geq 2 $.

\begin{Proposition}\label{prop:m2}
 Under the conditions above, suppose that, up to a subsequence, $m(x_n) \geq 2$ for all n. Then \eqref{eq_cont} cannot hold true.   
\end{Proposition}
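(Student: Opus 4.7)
My plan is to argue by contradiction via a blow-up analysis combined with the Caffarelli--Jerison--Kenig (CJK) monotonicity formula, broadly following the strategy of \cite{ContiTerraciniVerziniAsymptotic, ContiTerraciniVerziniOPP, ContiTerraciniVerziniVariation}. Assume $(x_n, r_n)$ satisfies \eqref{eq_cont} and \eqref{eq_bbound} with $m(x_n)\geq 2$. First I would exploit the continuity of each $\bar u_i$ (Proposition \ref{prop_cont}) together with the orthogonality $\bar u_i \bar u_j\equiv 0$ to show that $U(x_n)=0$: if some $\bar u_i(x_n)>0$, continuity would place an open neighborhood of $x_n$ inside $\{\bar u_i>0\}$, forcing $\bar u_j\equiv 0$ there for all $j\neq i$ and violating $m(x_n)\geq 2$. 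Up to a subsequence, I may then assume two distinguished indices, say $i=1,2$, remain active at $x_n$, i.e.\ $|\Omega_{\bar u_i}\cap B_r(x_n)|>0$ for every $r>0$ and every $n$.

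Next I would invoke the CJK monotonicity formula at $x_n$ applied to the pair $(\bar u_1,\bar u_2)$: both are continuous, nonnegative, have disjoint positivity sets, vanish at $x_n$, and satisfy $\Delta \bar u_i\geq -\lambda_{\bar u_i}\|\bar u_i\|_\infty$ by Proposition \ref{prop_bound}. Setting $\Sigma_i(x,r):=r^{-N}\int_{B_r(x)}|\nabla \bar u_i|^2$, one obtains a uniform bound
\[
\Sigma_1(x_n,r)\cdot \Sigma_2(x_n,r)\leq C \qquad \forall\, r\in(0,r_0),
\]
independent of $n$.

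Then I would blow up at the scale of concentration. Set $L_n^2:=\Sigma(x_n,r_n)\to+\infty$ and define $u_{i,n}(x):=\bar u_i(x_n+r_n x)/(L_n r_n)$, so that $\sum_{i=1}^k\int_{B_1}|\nabla u_{i,n}|^2=1$. A Caccioppoli estimate combined with \eqref{eq:subsolution} yields uniform $H^1_{loc}(\R^N)$ bounds, while Proposition \ref{p:rescaled} translates into an inequality for $u_{i,n}$ whose error terms are of order $1/L_n \to 0$. Extracting a subsequence converging strongly in $H^1_{loc}(\R^N)$ to some $u_{i,\infty}$, the limit profile satisfies $-\Delta u_{i,\infty}\leq 0$, $-\Delta \hat u_{i,\infty}\geq 0$ and $u_{i,\infty}\, u_{j,\infty}\equiv 0$, so $(u_{1,\infty},\ldots,u_{k,\infty})\in\mathcal{S}_{0,\ldots,0}(B_R)$ for every $R>0$. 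Rescaling the CJK bound yields, for each fixed $R>0$,
\[
\prod_{i=1,2}\int_{B_R}|\nabla u_{i,n}|^2=\frac{R^{2N}}{L_n^4}\prod_{i=1,2}\Sigma_i(x_n,r_n R)\leq \frac{CR^{2N}}{L_n^4}\to 0 \text{ as } n\to\infty,
\]
so at most one of $u_{1,\infty},u_{2,\infty}$ can have nontrivial gradient on $B_R$. Running this argument for each pair of active indices shows that exactly one component $u_{i_0,\infty}$ carries all the gradient mass (using strong convergence and the normalization), while the remaining components have zero gradient and so are nonnegative constants, which the pointwise orthogonality $u_{i,\infty}u_{j,\infty}\equiv 0$ forces to vanish. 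Consequently $\hat u_{i_0,\infty}=u_{i_0,\infty}$ is simultaneously subharmonic and superharmonic, hence a nonnegative harmonic function on $\R^N$, hence constant by the classical Liouville property (cf.\ Lemma \ref{lemma:Liouville}), contradicting $\int_{B_1}|\nabla u_{i_0,\infty}|^2=1$.

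The main obstacle is establishing strong $H^1_{loc}$ convergence of the blow-up sequence: the functions $u_{i,n}$ are not automatically $L^\infty$-bounded (Caccioppoli gives $L_n r_n\leq C$ but no matching lower bound), and the error terms in the rescaled inequality must be absorbed carefully. This is where the compactness machinery for the class $\mathcal{S}_{\lambda_1,\ldots,\lambda_k}$ developed in \cite{ContiTerraciniVerziniVariation} becomes essential: once the limiting system is identified, compactness and interior regularity in class $\mathcal{S}$ transfer back to strong convergence of the approximating sequence, closing the argument.
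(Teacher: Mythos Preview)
Your overall architecture (blow-up plus CJK, in the spirit of \cite{ContiTerraciniVerziniVariation}) matches the paper, but the \emph{contradiction mechanism} you propose is different from the one the paper uses, and your version has a genuine gap precisely at the point you flag as ``the main obstacle''. You want to conclude that the single surviving blow-up component $u_{i_0,\infty}$ is a nonnegative harmonic function on $\R^N$, hence constant, which would contradict $\int_{B_1}|\nabla u_{i_0,\infty}|^2=1$. Passing that normalization to the limit requires strong $H^1(B_1)$ convergence of $u_{i_0,n}$, and this is exactly what you do not have. The compactness machinery for the class $\mathcal{S}$ from \cite{ContiTerraciniVerziniVariation} does not apply directly: the rescaled sequence satisfies the approximate inequalities of Proposition~\ref{p:rescaled} with error terms of order $1/L_n$, not the exact differential inequalities defining $\mathcal{S}$, so you cannot invoke that theory off the shelf. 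Moreover, you implicitly need uniform $H^1_{loc}(\R^N)$ bounds and an $L^\infty$ bound in order to quote Lemma~\ref{lemma:Liouville}; neither is automatic, since $L_n r_n$ need not stay bounded below and $\Sigma(x_n,Rr_n)$ is not a priori controlled by $\Sigma(x_n,r_n)$ for $R>1$. You also never distinguish the case $\|U_n\|_{L^2(B_1)}\to\infty$, in which the sequence is not even bounded in $H^1(B_1)$.

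The paper sidesteps all of this by working only on $B_1$, using only \emph{weak} $H^1(B_1)$ convergence, and deriving the contradiction at the $L^1$ level rather than through the gradient. After showing that $\bar u_{1,\infty}$ is harmonic and strictly positive in $B_1$ (maximum principle), the key new ingredient is to exploit $\bar u_{1,n}(0)=0$: via the spherical-mean identity of Proposition~\ref{prop_Con} and the measure estimate on $\sigma_{1,n}=\Delta\bar u_{1,n}+r_n^2\lambda_{\bar u_1}\bar u_{1,n}$ coming from the rescaled inequality \eqref{eq:32}, one obtains
\[
\int_{B_{1/2}}\bar u_{1,n}\ \leq\ \frac{C}{L_n}+C\sum_{i\geq 2}\int_{B_{1/2}}\bar u_{i,n}\ \longrightarrow\ 0,
\]
which contradicts $\int_{B_{1/2}}\bar u_{1,\infty}>0$ using only $L^2$ (hence $L^1$) convergence. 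The case $\|U_n\|_{L^2(B_1)}\to\infty$ is handled separately by an additional normalization. This is the step your outline is missing.
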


\begin{proof}
It follows from hypothesis $m(x_n)\geq 2$, combined with the continuity of $\bar{u}_i$, that $\bar{u}_i(x_n) = 0$, for every $i=1, \ldots, k$, and for all $n$. Set $v_i:=\bar{u}_i$ and $w_i:=\sum_{j\neq i}^k\bar{u}_j$. Applying Proposition \ref{prop_bound}, we have that
\[
-\Delta{v_i} \leq \gamma \;\;\mbox{ in }\;B_{r_n}(x_n) \:\:\mbox{ and }  \:\: -\Delta{w_i} \leq \gamma \;\;\mbox{ in }\; B_{r_n}(x_n),
\]
in the sense of distributions, where 
\[
\gamma := \max\left\{\lambda_{\bar{u}_i}\|\bar{u}_i\|_{L^\infty(\Omega)}, \sum_{j\neq i}^k\lambda_{\bar{u}_j}\|\bar{u}_j\|_{L^\infty(\Omega)}\right\}.
\]
Now, we employ Lemma \ref{jck_lemma} in $B_{r_n}(x_n) \subset \Omega$ to the pair $(v_i, w_i)$ to conclude that  
\[
\left(\dfrac{1}{r_n^2}\int_{B_{r_n}(x_n)}\dfrac{|\nabla v_i|^2}{|x-x_n|^{N-2}}\right)\left(\dfrac{1}{r_n^2}\int_{B_{r_n}(x_n)}\dfrac{|\nabla w_i|^2}{|x-x_n|^{N-2}}\right) \leq C,
\]
where $C > 0$ is independent of $n$. Since $|x-x_n| \leq r_n$ for $x\in B_{r_n}(x_n)$, we obtain
\[
\left(\dfrac{1}{r_n^N}\int_{B_{r_n}(x_n)}|\nabla v_i|^2\right)\left(\dfrac{1}{r_n^N}\int_{B_{r_n}(x_n)}|\nabla w_i|^2\right) \leq C.
\]
From the inequality above and \eqref{eq_cont}, we infer the existence of only one component, say $\bar{u}_1$, such that
\begin{equation}\label{eq_29}
\dfrac{1}{r_n^N}\int_{B_{r_n}(x_n)}|\nabla \bar{u}_1|^2 \to \infty,    
\end{equation}
and 
\begin{equation}\label{eq_30}
\dfrac{1}{r_n^N}\int_{B_{r_n}(x_n)}|\nabla \bar{u}_i|^2 \to 0,     
\end{equation}
for every $i=2, \ldots, k$. For the sake of clarity, we split the proof into five steps.
\bigskip

\noindent\textit{Step 1 (Blow up sequence):} We introduce the blow-up sequence
\[
U_n(x) = \dfrac{1}{L_nr_n}U(x_n+r_nx), \;\;\mbox{ for }\; x\in B_1,
\]
where 
\[
L_n^2 := \dfrac{1}{r_n^N}\int_{B_{r_n}(x_n)}|\nabla U|^2\to \infty.
\]
Let us denote $U_n = (\bar{u}_{1,n}, \ldots, \bar{u}_{k,n})$. As a consequence of the definition of $L_n$, 
\begin{align} \label{eq:44}
\int_{B_1}|\nabla U_n|^2 = \dfrac{1}{L_n^2}\int_{B_1}|\nabla U(x_n + r_nx)|^2 = \dfrac{1}{L_n^2r_n^N}\int_{B_{r_n}(x_n)}|\nabla U|^2 = 1.     
\end{align}
Hence, from \eqref{eq_bbound}, we conclude that $\int_{\partial B_1}|\nabla U_n|^2$ is also bounded. 
\smallbreak
From here, we split the proof into two cases:
\smallbreak
\noindent\textbf{Case 1:} Suppose that there exists a positive constant $C$, independent of $n$ such that $\|U_n\|_{L^2(B_1)} \leq C$. In this case, using \eqref{eq:44}, there exists $U_\infty \in H^1(B_1)$ such that $U_n \rightharpoonup U_\infty$ in $H^1(B_1)$ and then we can proceed to {\it Step 2}.

\bigskip 

\noindent\textit{Step 2 ($U_\infty \not\equiv 0$):}  We denote $U_\infty = (\bar{u}_{1,\infty}, \ldots, \bar{u}_{k,\infty})$. From \eqref{eq_30}, we get that 
\[
\int_{B_1}|\nabla\bar{u}_{i, n}|^2 \to 0, \;\;\mbox{ for every }\; i = 2, \ldots, k,
\]
which implies $\|\nabla\bar{u}_{i, \infty}\|_{L^2(B_1)} = 0$ for every $i = 2, \ldots, k$. Since $\bar{u}_{i, n}\cdot \bar{u}_{j,n} = 0$ for $i \neq j$, the a.e. convergence implies that  at most  one component of $U_\infty$ is nonzero. In view of the definition of $U_n$ and Proposition \ref{prop_bound}, we obtain
\[
-\Delta\bar{u}_{1, n}(x) = \dfrac{r_n}{L_n}(-\Delta \bar{u}_{1}(x_n + r_nx)) \leq \dfrac{r_n}{L_n}\lambda_{\bar{u}_1}\bar{u}_1(x_n+r_nx) = r_n^2\lambda_{\bar{u}_1}\bar{u}_{1, n}(x), \quad \text{for $x \in B_1$.}
\]
 Multiplying the inequality above by $\bar{u}_{1,n}$ and integrating by parts, we conclude
\[
\int_{B_1}|\nabla\bar{u}_{1, n}|^2 \leq \int_{\partial B_1}\bar{u}_{1, n}\dfrac{\partial\bar{u}_{1, n}}{\partial\nu} + r_n^2\lambda_{\bar{u}_{1}}\int_{B_1}\bar{u}^2_{1, n},
\]
where $\nu$ is an outward unit normal vector. Now, suppose that $\bar{u}_{1,\infty} \equiv 0$. Employing the compacts embeddings of $H^1(B_1)$ in $L^2(B_1)$ and in $L^2(\partial B_1)$, and the fact that $\|\nabla U_n\|_{L^2(\partial B_1)}$ is bounded, we obtain that 
\[
\int_{\partial B_1}\bar{u}_{1, n}\dfrac{\partial\bar{u}_{1, n}}{\partial\nu} + r_n^2\lambda_{\bar{u}_{1}}\int_{B_1}\bar{u}^2_{1, n} \to 0,
\]
which implies $\|\nabla \bar u_{1,n}\|_{L^2(B_1)} \to 0$, and so $\|\nabla U_n\|_{L^2(B_1)} \to 0$, a contradiction with $\|\nabla U_n\|_{L^2(B_1)} = 1$. Therefore, $\bar{u}_{1, \infty} \not\equiv 0$, which implies $U_\infty =(\bar{u}_{1, \infty}, 0, \ldots, 0)\not\equiv 0$. 
\bigskip

\noindent\textit{Step 3 ($\bar{u}_{1, \infty}$ is a harmonic function):} First, we recall that, for every $i=1, \ldots, k$,
\[
-\Delta\bar{u}_{i, n}(x) \leq r_n^2\lambda_{\bar{u}_i}\bar{u}_{i, n}(x) \;\;\mbox{ in }\; B_1.
\]
Hence, by passing the limit as $n \to \infty$, we see in particular that
\begin{equation}\label{eq_31}
-\Delta\bar{u}_{1, \infty}(x) \leq 0 \;\;\mbox{ in }\; B_1. 
\end{equation}
Now, reasoning as Proposition \ref{p:rescaled} we can infer that for any nonnegative $\varphi \in H_0^1(B_1)$ with $\mbox{supp}(\varphi) \subseteq B_{2s}$, $2s \in (0,1]$, we have
\begin{align}\label{eq_32}
\left\langle -\Delta\hat{u}_{i, n} - r_n^2\lambda_{\bar{u}_i}\bar{u}_{i, n} + r_n^2\sum_{j\neq i}\lambda_{\bar{u}_j}\bar{u}_{j, n}, \varphi\right\rangle & \geq -\dfrac{Cs}{L_n}\left(s^{N-2} + r_ns^{-1}\|\varphi\|_{1} + r_n^2\|\varphi\|^2_{2} +\right. \\
& \left.+\|\nabla\varphi\|^2_{2}+r_n^N\|\varphi\|_{1}\|\nabla\varphi\|^2_{2} +r_n^N\|\varphi\|^2_{2}\|\nabla\varphi\|^2_{2}\right), \nonumber
\end{align}
in the sense of distributions. Now, taking $\varphi \in C^\infty_c(B_1)$, and recalling that $r_n \to 0$, $L_n \to \infty$ and $\bar{u}_{j,n} \to 0$ in $H^1(B_1)$ for every $j = 2,\ldots, k$, we conclude
\[
-\Delta\bar{u}_{1, \infty} = -\Delta\hat{u}_{1, \infty} \geq 0 \;\;\mbox{ in }\; B_1,
\]
and combined with \eqref{eq_31}, we infer that 
\[
\Delta\bar{u}_{1, \infty} = 0 \;\;\mbox{ in }\;\;B_1.
\] 
According to the maximum principle, we have $\bar{u}_{1, \infty} > 0$ in $B_1$ (recall that $\bar{u}_{1, \infty} \geq 0$ and $\bar{u}_{1, \infty} \not\equiv 0$). 
\bigskip

\noindent\textit{Step 4 (Contradiction):} At this point, we apply Proposition \ref{prop_Con} to the functions $\bar{u}_{i, n}$. Denote $\sigma_{i,n} := \Delta\bar{u}_{i,n} + 
r_n^2\lambda_{\bar{u}_i}\bar{u}_{i,n}$, and recall that $\sigma_{i,n} \geq 0$, for every $i =1, 2, \ldots, k$. Notice that, for $n$ sufficiently large, we have $\Delta\bar{u}_{i,n} \geq -1$. Moreover, by Proposition \ref{prop_bound}, we have $\bar{u}_{i,n} \in L^{\infty}(B_{1/2})$. Therefore, by applying Proposition \ref{prop_Con}, we obtain, for all $r \in (0, 1/2)$, that
\begin{align*}
 \dashint_{\partial B_r}[\bar{u}_{1,n} - \bar{u}_{1,n}(0)] & = C(N)\int_0^rs^{1-N}\left[\int_{B_s}d(\Delta\bar{u}_{1,n})\right]ds = C(N)\int_0^rs^{1-N}\Delta\bar{u}_{1,n}(B_s)ds \\
 & = C(N)\int_0^rs^{1-N}\sigma_{1,n}(B_s)ds - C(N)\int_0^rs^{1-N}(r_n^2\lambda_{\bar{u}_{1}}\bar{u}_{1,n})(B_s)ds \\
 & \leq C(N)\int_0^rs^{1-N}\sigma_{1,n}(B_s)ds.
\end{align*}
Now, for each fixed $s \in (0,r)$, we take a test function $\varphi \in C_0^\infty(B_{2s})$ such that $0 \leq \varphi \leq 1$ with $\varphi \equiv 1$ in $B_s$ and $\|\nabla\varphi\|_{L^\infty(B_{2s})} \leq C/s$. Hence, 
\begin{align*}
 \sigma_{1,n}(B_s) & \leq \left\langle \sigma_{1,n}, \varphi\right\rangle  = \left\langle \sigma_{1,n} - \sum_{i=2}^k\sigma_{i,n}, \varphi\right\rangle  + \sum_{i=2}^k\left\langle \sigma_{i,n}, \varphi\right\rangle \\
 & \leq \left\langle \Delta\hat{u}_{1,n} + r_n^2\lambda_{\bar{u}_{1}}\bar{u}_{1,n} - r_n^2\sum_{i=2}^k\lambda_{\bar{u}_{i}}\bar{u}_{i,n}, \varphi\right\rangle  +\sum_{i=2}^k\sigma_{i,n}(B_{2s})\leq \dfrac{Cs^{N-1}}{L_n} +\sum_{i=2}^k\sigma_{i,n}(B_{2s}),
\end{align*}
where in the last inequality we applied \eqref{eq_32} and used the fact that $r_n\leq 1$ and $|B_{2s}| \leq C(N)s^{N}$ (and hence we have $\|\varphi\|_{1} \leq C(N)s^N$, $\|\varphi\|^2_{2} \leq C(N)s^N$  and $\|\nabla\varphi\|^2_{2} \leq C(N)s^{N -2}$). Therefore,
\[
s^{1-N}\sigma_{1,n}(B_s) \leq \dfrac{C}{L_n} + s^{1-N}\sum_{i=2}^k\sigma_{i,n}(B_{2s}), \quad \text{for all $s \in (0,r)$.}
\]
Plugging the inequalities above, we get (recall that $\bar{u}_{1,n}(0) = 0$, as observed in the first line of the proof)
\begin{equation} \label{eq_39}
  \dashint_{\partial B_r}\bar{u}_{1,n} \leq C(N) \int_0^r\left[ \dfrac{C}{L_n} + s^{1-N}\sum_{i=2}^k\sigma_{i,n}(B_{2s})\right]ds. 
\end{equation}
Now, we use Proposition \ref{prop_Con} to estimate the last term in \eqref{eq_39}:
\begin{align*}
 \int_0^rs^{1-N}\sum_{i=2}^k\sigma_{i,n}(B_{2s})ds & = \sum_{i=2}^k \int_0^rs^{1-N}(\Delta\bar{u}_{i, n} + r_n^2\lambda_{\bar{u}_{i}}\bar{u}_{i,n})(B_{2s})ds \\ & = \sum_{i=2}^k \int_0^rs^{1-N}\Delta\bar{u}_{i,n}(B_{2s})ds + \sum_{i=2}^k r_n^2\lambda_{\bar{u}_{i}}\int_0^rs^{1-N}\int_{B_{2s}}\bar{u}_{i,n}\, dx ds \\
 & \leq C(N)\sum_{i=2}^k\dashint_{\partial B_r}\bar{u}_{i,n} + \sum_{i=2}^k \int_0^r\frac{C(N)sr_n\lambda_{\bar{u}_{i}} \|\bar{u}_{i}\|_{L^\infty(B_1)}}{L_n}ds \\
 & = C(N)\sum_{i=2}^k\dashint_{\partial B_r}\bar{u}_{i,n} + \sum_{i=2}^k \frac{C(N)r^2r_n\lambda_{\bar{u}_{i}} \|\bar{u}_{i}\|_{L^\infty(B_1)}}{2 L_n}.
\end{align*}
Plugging the inequality above into \eqref{eq_39} and multiplying it by $|\partial B_r|$ yields to 
\begin{align*}
\int_{\partial B_r}\bar{u}_{1,n} & \leq \dfrac{C(N)r^{N}}{L_n} + C(N)\sum_{i=2}^k\int_{\partial B_r}\bar{u}_{i,n} +  \sum_{i=2}^k \frac{C(N)r_n\lambda_{\bar{u}_{i}} \|\bar{u}_{i}\|_{L^\infty(B_1)}r^{N+1}}{L_n}, \quad \text{for all $r \in (0,1/2)$.}
\end{align*}
Now, we integrate the inequality above with respect to $r$ to obtain
\[
\int_{B_{1/2}}\bar{u}_{1,n} \leq \dfrac{C(N)}{L_n} + C(N)\sum_{i=2}^k\int_{B_{1/2}}\bar{u}_{i,n} + \sum_{i=2}^k \frac{C(N)r_n\lambda_{\bar{u}_{i}} \|\bar{u}_{i}\|_{L^\infty(B_1)}}{L_n}. \]
Finally, we pass the limit as $n \to 0$, and recalling that $L_n \to \infty$, $r_n \to 0$, $\bar{u}_{1,n} \to \bar{u}_{1,\infty} > 0$ and $\|\bar{u}_{i,n}\|_{L^1(B_{1/2})} \to 0$, we get
\[
  0 < \int_{B_{1/2}}\bar{u}_{1, \infty}  \leq 0,
\]
which is a contradiction.

\noindent\textbf{Case 2:} Assume now that $\|U_n\|_{L^2(B_1)} \rightarrow \infty$, and set $V_n := U_n\cdot\|U_n\|_{L^2(B_1)}^{-1} = (v_{1,n}, \ldots, v_{k,n})$. Hence, 
\[
\|V_n\|_{L^2(B_1)} = 1 \;\;\;\mbox{ and }\;\; \|\nabla V_n\|_{L^2(B_1)} \to 0.
\]
Therefore, there exists $V_\infty \in H^1(B_1)$ such that $V_n \rightharpoonup V_\infty = (v_{1,\infty}, \ldots, v_{k,\infty})$ locally in $H^1(B_1)$ and $\|V_\infty\|_{L^2(B_1)} = 1$. Since, for every $\varphi\in H^1(B_1)$,
\[
\int_{B_1} \nabla v_{i,n}\cdot \nabla \varphi\to 0 \quad \: \text{and} \quad \: \int \nabla v_{i,n}\cdot \nabla \varphi\to \int \nabla v_{i,\infty} \cdot \nabla \varphi,
\]
for every $i=1, \ldots, k$, we have that $\|\nabla V_\infty\|_{L^2(B_1)} = 0$, consequently $V_\infty = (c_1, \ldots, c_k)$, where $0 \leq c_i \in \mathbb{R}$, for every $i = 1, \ldots, k$. Moreover, since $v_{i, n}\cdot v_{j,n} = 0$ for $i \neq j$, the a.e. convergence implies that only one component of $V_\infty$ is nonzero;  without loss of generality, we can say $V_\infty = (c_1, 0, \ldots, 0)$. In particular, we have $v_{1, \infty} > 0$, and $\Delta v_{1, \infty} = 0$. At this point, we can apply {\it Step 5} to $V_n$.
\medbreak

\noindent\textit{Step 5 (Contradiction for $V_n$):} Following the general lines of {\it Step 4}, we define $\tilde{\sigma} := \Delta v_{i,n} + r_n^2\lambda_{\bar{u}_i}v_{i, n} \geq 0$, and apply Proposition \ref{prop_Con} to $v_n$ and we obtain that for all $r \in (0, 1/2)$,
\begin{align*}
 \dashint_{\partial B_r}[v_{1,n} - v_{1,n}(0)] & = C(N)\int_0^rs^{1-N}\left[\int_{B_s}d(\Delta v_{1,n})\right]ds = C(N)\int_0^rs^{1-N}\Delta\bar{u}_{1,n}(B_s)ds \\
 & \leq C(N)\int_0^rs^{1-N}\tilde{\sigma}_{1,n}(B_s)ds.
\end{align*}
Once again, for each fixed $s \in (0,r)$, we take a test function $\varphi \in C_0^\infty(B_{2s})$ such that $0 \leq \varphi \leq 1$ with $\varphi \equiv 1$ in $B_s$ and $\|\nabla\varphi\|_{L^\infty(B_{2s})} \leq C/s$. Hence, 
\begin{align*}
 \tilde{\sigma}_{1,n}(B_s) & \leq \left\langle \tilde{\sigma}_{1,n}, \varphi\right\rangle  = \left\langle \tilde{\sigma}_{1,n} - \sum_{i=2}^k\tilde{\sigma}_{i,n}, \varphi\right\rangle  + \sum_{i=2}^k\left\langle \tilde{\sigma}_{i,n}, \varphi\right\rangle \\
 & \leq \left\langle \Delta v_{1,n} + r_n^2\lambda_{\bar{u}_{1}} v_{1,n} - r_n^2\sum_{i=2}^k\lambda_{\bar{u}_{i}} v_{i,n}, \varphi\right\rangle  +\sum_{i=2}^k\tilde{\sigma}_{i,n}(B_{2s}) \leq \dfrac{Cs^{N-1}}{\|U_n\|_{L^2(B_1)}L_n} + \sum_{i=2}^k\tilde{\sigma}_{i,n}(B_{2s}),
\end{align*}
where in the last inequality we applied \eqref{eq_32} (just multiply the inequality \eqref{eq_32} by $\|U_n\|^{-1}_{L^2(B_1)}$). Therefore,
\[
s^{1-N}\sigma_{1,n}(B_s) \leq \dfrac{C}{\|U_n\|_{L^2(B_1)}L_n} + s^{1-N}\sum_{i=2}^k\tilde{\sigma}_{i,n}(B_{2s}), 
\]
for all $s \in (0,r)$. Plugging the inequalities above, we get (recall that $v_{1,n}(0) = 0$)
\begin{equation} \label{eq:39v}
  \dashint_{\partial B_r}v_{1,n} \leq C(N)\int_0^r\left[ \dfrac{C}{\|U_n\|_{L^2(B_1)}L_n} + s^{1-N}\sum_{i=2}^k\tilde{\sigma}_{i,n}(B_{2s})\right]ds. 
\end{equation}
Now, we use again the Proposition \ref{prop_Con} to estimate the last term in the inequality above:
\begin{align*}
 \int_0^rs^{1-N}\sum_{i=2}^k\tilde{\sigma}_{i,n}(B_{2s})ds & = \sum_{i=2}^k \int_0^rs^{1-N}(\Delta v_{i, n} + r_n^2\lambda_{\bar{u}_{i}}v_{i,n})(B_{2s})ds \\
 & = \sum_{i=2}^k \int_0^rs^{1-N}\Delta v_{i,n}(B_{2s})ds + \sum_{i=2}^k r_n^2\lambda_{\bar{u}_{i}}\int_0^rs^{1-N}v_{i,n}(B_{2s})ds \\
 & \leq C(N)\sum_{i=2}^k\dashint_{\partial B_r}v_{i,n} + \sum_{i=2}^k \int_0^r\frac{C(N)sr_n\lambda_{\bar{u}_{i}} \|\bar{u}_{i}\|_{L^\infty(B_1)}}{\|U_n\|_{L^2(B_1)}L_n}ds \\
 & = C(N)\sum_{i=2}^k\dashint_{\partial B_r}v_{i,n} + \sum_{i=2}^k \frac{C(N)r^2r_n\lambda_{\bar{u}_{i}} \|\bar{u}_{i}\|_{L^\infty(B_1)}}{2\|U_n\|_{L^2(B_1)}L_n}.
\end{align*}
Plugging the inequality above into \eqref{eq:39v}, and multiplying it by $|\partial B_r|$ yields to 
\begin{align*}
\int_{\partial B_r}v_{1,n} & \leq \dfrac{C(N)r^{N}}{\|U_n\|_{L^2(B_1)}L_n} + C(N)\int_{\partial B_r}v_{i,n} + \sum_{i=2}^k \frac{C(N)r_n\lambda_{\bar{u}_{i}} \|\bar{u}_{i}\|_{L^\infty(B_1)}r^{N+1}}{\|U_n\|_{L^2(B_1)}L_n},
\end{align*}
for all $r \in (0,1/2)$. Now, we integrate the inequality above with respect to $r$ to obtain
\[
\int_{B_{1/2}}v_{1,n} \leq \dfrac{C(N)}{\|U_n\|_{L^2(B_1)}L_n}  + C(N)\int_{B_{1/2}}v_{i,n} + \sum_{i=2}^k \frac{C(N)r_n\lambda_{\bar{u}_{i}} \|\bar{u}_{i}\|_{L^\infty(B_1)}}{\|U_n\|_{L^2(B_1)}L_n}. 
\]
Finally, we pass the limit as $n \to 0$, and recalling that $\|U_n\|_{L^2(B_1)} \to \infty$, $L_n \to \infty$, $r_n \to 0$, $v_{1,n} \to c_1 > 0$ and $\|v_{i,n}\|_{L^1(B_{1/2})} \to 0$, for $i=2, \ldots, k$, we get,
\[
  0 < \int_{B_{1/2}}c_1  \leq 0,
\]
which is a contradiction.
\end{proof}

It is remaining to deal with the case $m(x_n)= 1$. This is the content of the next proposition.

\begin{Proposition}\label{prop:m1}
 Under the conditions above, suppose that, up to a subsequence, $m(x_n) = 1$ for all $n$. Then \eqref{eq_cont} cannot hold true.   
\end{Proposition}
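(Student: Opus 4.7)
The plan is to mirror the two-case blow-up argument of Proposition \ref{prop:m2}, exploiting the additional simplification that only one component is active in the scaled picture. First, since $m(x_n) = 1$ and the index set is finite, I would pass to a subsequence so that the unique active component at every $x_n$ is the same, say $\bar{u}_1$. By the continuity of $\bar{u}_1$ (Proposition \ref{prop_cont}), one must have $\bar{u}_1(x_n) = 0$: otherwise $\bar{u}_1 > 0$ in a full neighborhood of $x_n$ where it solves classically $-\Delta\bar{u}_1 = \lambda_{\bar{u}_1}\bar{u}_1$, and standard elliptic regularity would bound $\Sigma(x_n, r_n)$, contradicting \eqref{eq_cont}. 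Next, Lemma \ref{lem_dist} applied with $A = Z_2$ allows one to assume $\dist(x_n, Z_2)/r_n \to \infty$, since otherwise we would reduce to a sequence landing in $Z_2$ and Proposition \ref{prop:m2} would close the case. A further nested application of Lemma \ref{lem_dist} aimed at the closed supports $\overline{\Omega_{\bar{u}_j}}$ with $j > 1$ then secures that $\bar{u}_j \equiv 0$ in $B_{Rr_n}(x_n)$ for every fixed $R > 0$, every $j > 1$ and all $n$ large; in particular $\hat{u}_1 \equiv \bar{u}_1$ on that ball, so Proposition \ref{p:rescaled} applied with $i=1$ collapses to a pure one-phase inequality for $\bar{u}_1$ alone.

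Now I would introduce the blow-up
\[
u_n(x) := \frac{\bar{u}_1(x_n + r_n x)}{L_n r_n}, \qquad L_n^2 := \Sigma(x_n, r_n) \to \infty,
\]
so that $\|\nabla u_n\|_{L^2(B_1)} = 1$ and, by \eqref{eq_bbound} together with a Caccioppoli estimate as in Step~1 of Proposition \ref{prop:m2}, $u_n$ is bounded in $H^1(B_R)$ for every $R>0$; extract a weak $H^1_{loc}(\R^N)$ limit $u_n \rightharpoonup u_\infty$. As in Proposition \ref{prop:m2}, the argument splits into Case~A, where $\|u_n\|_{L^2(B_1)}$ remains bounded, and Case~B, where $\|u_n\|_{L^2(B_1)} \to \infty$.

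In Case~A, passing to the limit in the scaled subsolution inequality $-\Delta u_n \leq r_n^2 \lambda_{\bar{u}_1} u_n$ of Proposition \ref{prop_bound} yields $\Delta u_\infty \geq 0$, while passing to the limit in the one-phase version of Proposition \ref{p:rescaled} divided by $L_n$ yields $\Delta u_\infty \leq 0$, so $u_\infty$ is harmonic in $\R^N$. Testing the subsolution inequality against $u_n\varphi^2$ with $\varphi \equiv 1$ on $B_1$, $\varphi \in C^\infty_c(B_2)$, shows that $u_\infty \equiv 0$ would force $\|\nabla u_n\|_{L^2(B_1)} \to 0$, contradicting the normalization; hence $u_\infty \not\equiv 0$. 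On the other hand, setting $\sigma_{1,n} := \Delta u_n + r_n^2\lambda_{\bar{u}_1}u_n \geq 0$, the rescaled inequality of Proposition \ref{p:rescaled} against a cut-off concentrated on $B_s$ gives $\sigma_{1,n}(B_s) \leq C r_n s^{N-1}/L_n$, so Proposition \ref{prop_Con} together with $u_n(0) = 0$ produces
\[
\dashint_{\partial B_r} u_n \;=\; C(N)\int_0^r s^{1-N}\Delta u_n(B_s)\,ds \;\leq\; C(N)\int_0^r s^{1-N}\sigma_{1,n}(B_s)\,ds \;\leq\; \frac{C r r_n}{L_n} \to 0
\]
as $n\to\infty$. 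Trace compactness in $L^1(\partial B_r)$ for a.e. $r$ then forces $\dashint_{\partial B_r} u_\infty = 0$, and since $u_\infty$ is a nonnegative harmonic function this contradicts $u_\infty \not\equiv 0$.

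Case~B is handled in close analogy by renormalizing $v_n := u_n/\|u_n\|_{L^2(B_1)}$, so that $\|\nabla v_n\|_{L^2(B_1)} \to 0$ and the weak limit $v_\infty$ must be the positive constant $|B_1|^{-1/2}$; the same chain of estimates, further divided by $\|u_n\|_{L^2(B_1)}$, produces $\dashint_{\partial B_r} v_n \to 0$, in conflict with the compactness of the trace which gives $\dashint_{\partial B_r} v_\infty = |B_1|^{-1/2} > 0$. The main technical obstacle will be the reduction in the first paragraph: the condition $B_{Rr_n}(x_n) \cap Z_2 = \emptyset$ does not by itself prevent other components from being active in disconnected subregions of $B_{Rr_n}(x_n)$ separated from $x_n$ by empty space, and ruling this out (so that the one-phase simplification $\hat{u}_1 \equiv \bar{u}_1$ is legitimate) is where the argument requires the most care, through an iterated use of Lemma \ref{lem_dist} applied to the closed supports of the remaining components.
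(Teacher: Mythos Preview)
Your strategy matches the paper's: reduce to a one--phase situation around $x_n$ (so that $\hat u_1\equiv\bar u_1$ locally), then run the two--case blow--up of Proposition~\ref{prop:m2} without the Caffarelli--Jerison--Kenig step. The paper organizes the preliminary reduction slightly differently --- it sets $\Gamma=\{\bar u_1=0\}$, treats the case $\dist(x_n,\Gamma)/r_n$ unbounded directly by interior regularity, and in the remaining case uses Lemma~\ref{lem_dist} first with $A=\Gamma$ to place $x_n\in\Gamma$, then Remark~\ref{rem_dist} with $Z_2$ to ensure $\dist(x_n,Z_2)\ge r_n$, after which it simply asserts that only $\bar u_1$ is nonidentically zero in $B_{r_n}(x_n)$. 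Your worry in the last paragraph about this assertion is legitimate (the condition $B_{r_n}(x_n)\cap Z_2=\emptyset$ does not by itself rule out another component supported in a disconnected subregion), but the paper does not spell out an iterated Lemma~\ref{lem_dist} argument either; it treats this as routine. Your reduction $\bar u_1(x_n)=0$ ``otherwise elliptic regularity bounds $\Sigma$'' is also a bit loose, since the neighborhood in which $\bar u_1>0$ may shrink with $n$; the paper's dichotomy on $\dist(x_n,\Gamma)/r_n$ is the clean way to handle this.

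There is, however, one genuine technical gap in your blow--up. You claim that $u_n$ is bounded in $H^1(B_R)$ for every $R>0$ ``by \eqref{eq_bbound} together with a Caccioppoli estimate as in Step~1 of Proposition~\ref{prop:m2}''. Neither ingredient gives this: the normalization $\|\nabla u_n\|_{L^2(B_1)}=1$ and \eqref{eq_bbound} control only $B_1$ and $\partial B_1$, while the Caccioppoli argument from the continuity proof applies to the \emph{unnormalized} rescalings (which are uniformly bounded in $L^\infty$), not to $u_n=\bar u_1(x_n+r_n\cdot)/(L_nr_n)$, whose $L^\infty$ norm need not be bounded. This matters because your argument for $u_\infty\not\equiv 0$ tests against $u_n\varphi^2$ with $\varphi\in C^\infty_c(B_2)$, which requires $u_n\to 0$ strongly in $L^2(B_2)$ --- and that in turn needs $H^1(B_2)$ bounds you have not established. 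The paper sidesteps this completely by working only in $B_1$: it multiplies $-\Delta w_n\le r_n^2\lambda_{\bar u_1}w_n$ by $w_n$ and integrates by parts over $B_1$ to get
\[
\int_{B_1}|\nabla w_n|^2 \le \int_{\partial B_1} w_n\,\partial_\nu w_n + r_n^2\lambda_{\bar u_1}\int_{B_1}w_n^2,
\]
and the boundary term is controlled precisely by \eqref{eq_bbound} together with the $L^2(\partial B_1)$ trace compactness. Restricting your blow--up to $B_1$ and using this boundary--flux identity fixes the gap and brings your argument in line with the paper's.
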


\begin{proof}
 Define $\Gamma := \{x \in \Omega\; :\; \bar{u}_1(x) = 0 \}$. We analyse two cases:\\

\noindent{\bf Case 1}: Suppose that $\frac{\mbox{dist}(x_n, \Gamma)}{r_n}$ is unbounded. Then, up to a subsequence $\frac{\mbox{dist}(x_n, \Gamma)}{r_n} \geq 1$ for large $n$. In this scenario, we can conclude that $B_{r_n}(x_n) \subset \Omega_{\bar{u}_1}$, for all $n$. Hence, by Proposition \ref{prop:equivalence_levels} and Proposition \ref{prop_cont} (or by combining Propositions 
\ref{prop_bound} and  \ref{prop_04}), $\bar{u}_1$ solves 
\[
-\Delta\bar{u}_1 = \lambda_{\bar{u}_1}\bar{u}_1 \;\;\; \mbox{ in }\;\; B_{r_n}(x_n).
\]
Hence, by applying Proposition \ref{prop_bound} and by elliptic regularity theory, we get that $\bar{u}_1 \in C^\infty(B_{r_n})$, which is a contradiction with \eqref{eq_cont}. \\

\noindent{\bf Case 2}: On the other hand, if $\frac{\mbox{dist}(x_n, \Gamma)}{r_n} <C$, for some $C > 0$, it follows from Lemma \ref{lem_dist} that we can also assume without loss of generality that $x_n \in \Gamma$, for all $n$. In view of Proposition \ref{prop:m2} and Remark \ref{rem_dist}, we can assume also that $\frac{\mbox{dist}(x_n, Z_{2})}{r_n} \geq 1$. In this case, only one component, say $\bar{u}_1$, is nonidentically zero in $B_{r_n}(x_n)$.

The proof then follows the general lines of Proposition \ref{prop:m2}, but we do not need to use the Monotonicity Lemma, since \eqref{eq_cont} is equivalent to 
\begin{equation}\label{eq_35}
\dfrac{1}{r_n^N}\int_{B_{r_n}(x_n)}|\nabla \bar{u}_1|^2 \to \infty.  
\end{equation}

We perform again the blow-up analysis. Define
\[
w_n(x) = \dfrac{1}{L_nr_n}\bar{u}_1(x_n+r_nx), \;\;\mbox{ for }\; x\in B_1,
\]
where 
\[
L_n^2 := \dfrac{1}{r_n^N}\int_{B_{r_n}(x_n)}|\nabla \bar{u}_1|^2.
\]
As before, we have
\begin{align*}
\int_{B_1}|\nabla w_n|^2 = 1,     
\end{align*}
which implies the boundness of $\int_{\partial B_1}|\nabla w_n|^2$ by Lemma \ref{lemma:bbound}. 

For simplicity, we divide the proof of this case into four steps:
\smallbreak

\noindent\textit{Step 1 (Convergence of $w_n$):} As in Proposition \ref{prop:m2}, either there exists $w_\infty \in H^1(B_1)$, such that up to a subsequence, $w_n \rightharpoonup w_\infty$ locally in $H^1(B_1)$, or the  sequence $v_n := w_n\cdot\|w_n\|_{L^2(B_1)}$ such that $v_n \to c > 0$ locally in $H^1(B_1)$.
\smallbreak

\noindent\textit{Step 2 ($w_\infty \not\equiv 0$):} It follows from the definition of $w_n$ and Proposition \ref{prop_bound} that
\[
-\Delta w_{n}(x) \leq r_n^2\lambda_{\bar{u}_1}w_n(x),
\]
for $x \in B_1$. Multiplying the inequality above by $w_{n}$ and integrating by parts, we conclude
\[
\int_{B_1}|\nabla w_{n}|^2 \leq \int_{\partial B_1}w_{n}\dfrac{\partial w_{n}}{\partial\nu} + r_n^2\lambda_{\bar{u}_{1}}\int_{B_1}w_n^2.
\]
Assume that $w_\infty \equiv 0$. Since $\|\nabla w_n\|_{L^2(\partial B_1)}$ is bounded, the right-hand side of the inequality above goes to zero, which is a contradiction.
\smallbreak

\noindent\textit{Step 3 ($w_\infty$ is a harmonic function):} As in Proposition \ref{prop:m2}, we can show that
\begin{equation}\label{eq_36}
\left\langle -\Delta w_{n} -r_n^2\lambda_{\bar{u}_1}w_{n}, \varphi\right\rangle \geq -\dfrac{C(\varphi)}{L_n}.    
\end{equation}
By taking the limit as $n \to\infty$ yields $-\Delta w_\infty \geq 0$ in $B_1$. Moreover, recall that $-\Delta w_{n} \leq r_n^2\lambda_{\bar{u}_1}w_n$ in $B_1$, we obtain $-\Delta w_\infty = 0$. Therefore, by the maximum principle, $w_\infty > 0$ in $B_1$. 

\smallbreak

\noindent\textit{Step 4 (Contradiction):} Now, we argue exactly as in Proposition \ref{prop:m2} to conclude that
\begin{align*}
\int_{\partial B_r}w_{n} & \leq \dfrac{C(N)r^{N}}{L_n} + C(N)\int_{\partial B_r}w_n + \sum_{i=2}^k \frac{C(N)r_n\lambda_{\bar{u}_{i}} \|\bar{u}_{i}\|_{L^\infty(B_1)}r^{N+1}}{L_n},   
\end{align*}
for all $r \in (0, 1/2)$. Finally, we integrate the inequality above and pass to the limit as $n \to \infty$ to conclude that
\[
0 < \int_{B_{1/2}}w_\infty \leq 0,
\]
which is a contradiction. This finishes the proof.
\end{proof}

\begin{Proposition}\label{prop:m0}
 Under the conditions above, suppose that, up to  a subsequence, $m(x_n)=0$ for all n. Then \eqref{eq_cont} cannot hold true.   
\end{Proposition}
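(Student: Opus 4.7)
The plan is to reduce the case $m(x_n)=0$ to the already settled cases $m(x_n)=1$ (Proposition \ref{prop:m1}) and $m(x_n)\geq 2$ (Proposition \ref{prop:m2}) by displacing the blowup center to a nearby point of $Z_1$ via Lemma \ref{lem_dist}. The key preliminary observation is that the set $\{x\in\Omega:m(x)=0\}$ is open, and hence $Z_1$ is relatively closed in $\Omega$. Indeed, if $m(x_0)=0$ then, by the very definition of $m$, for each $i\in\{1,\dots,k\}$ there exists $\rho_i>0$ with $|B_{\rho_i}(x_0)\cap\Omega_{\bar u_i}|=0$; setting $\rho:=\min_i\rho_i>0$ one obtains $\bar u_i\equiv 0$ a.e.\ in $B_\rho(x_0)$ for every $i$, and therefore $m(y)=0$ for every $y\in B_{\rho/2}(x_0)$. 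In particular $\operatorname{dist}(x_n,Z_1)$ is well defined.

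I would then distinguish two cases according to the behavior of $\operatorname{dist}(x_n,Z_1)/r_n$.

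\emph{Case 1.} Suppose, up to a subsequence, that $\operatorname{dist}(x_n,Z_1)\geq r_n$ for every $n$. Then $B_{r_n}(x_n)\subset\{m=0\}$, and by the previous paragraph every point of $B_{r_n}(x_n)$ admits an open neighborhood on which $U\equiv 0$ a.e. Covering the ball by countably many such neighborhoods (Lindel\"of property of $\R^N$), one gets $U\equiv 0$ a.e.\ in $B_{r_n}(x_n)$, so $\int_{B_{r_n}(x_n)}|\nabla U|^2=0$, in direct contradiction with \eqref{eq_cont}.

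\emph{Case 2.} Otherwise, up to extracting a subsequence, $\operatorname{dist}(x_n,Z_1)<r_n$ for all $n$. I would then apply Lemma \ref{lem_dist} with $A=Z_1\subset\bar\Omega$ to produce sequences $(x_n')\subset Z_1$ and $r_n'\to 0$ with $B_{r_n'}(x_n')\subset\Omega$ for which both \eqref{eq_cont} and \eqref{eq_bbound} still hold. By construction $m(x_n')\geq 1$, so after a further extraction we may assume either $m(x_n')=1$ for every $n$, or $m(x_n')\geq 2$ for every $n$; in either case Proposition \ref{prop:m1} or Proposition \ref{prop:m2} applied to $(x_n',r_n')$ contradicts \eqref{eq_cont}. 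The only delicate point is the openness of $\{m=0\}$, a routine measure-theoretic observation needed to guarantee that $\nabla U$ genuinely vanishes throughout $B_{r_n}(x_n)$ in Case 1; once this is in hand the reduction is essentially automatic.
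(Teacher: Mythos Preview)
Your proposal is correct and follows essentially the same approach as the paper: reduce to the cases $m\geq 1$ via Lemma \ref{lem_dist} (the paper packages this through Remark \ref{rem_dist}), and in the remaining case $\operatorname{dist}(x_n,Z_1)\geq r_n$ conclude that $U\equiv 0$ on $B_{r_n}(x_n)$, contradicting \eqref{eq_cont}. The only minor difference is that you justify $U\equiv 0$ on $B_{r_n}(x_n)$ via a Lindel\"of covering argument, whereas the paper can simply invoke the continuity of each $\bar u_i$ (Proposition \ref{prop_cont}): since every $y\in B_{r_n}(x_n)$ has $m(y)=0$, each $\bar u_i$ vanishes on a neighborhood of $y$ and hence at $y$.
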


\begin{proof}
From Remark \ref{rem_dist}, Propositions \ref{prop:m1} and \ref{prop:m2}, we can assume that $\mbox{dist}(x_n, Z_{1}) \geq r_n$. In this case, it follows from the definition of $m(x_n)$ that $\bar{u}_i\equiv 0$ in $B_{r_n}(x_n)$ for all $i=1,\ldots,k$, which is a contradiction with \eqref{eq_cont}.
\end{proof}

\section{Conclusion of the proof of the main results}\label{Sec_proof}

\begin{Lemma}\label{lemma:aux1}
Assume that problem \eqref{eigenvalue_problem2} is achieved by an optimal partition $(\omega_1,\ldots,\omega_k)\in {\mathcal P}_a(\Omega)$. Then
\[
 \sum_{i=1}^k |\omega_i|=a.
\]
\end{Lemma}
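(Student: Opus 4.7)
\medskip

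\noindent\textbf{Proof proposal for Lemma \ref{lemma:aux1}.}
The plan is by contradiction: assume $b:=\sum_{i=1}^k |\omega_i|<a$, set $\delta:=a-b>0$, and construct a competitor in $\mathcal{P}_a(\Omega)$ with strictly smaller cost. The mechanism is the strict monotonicity of $\lambda_1$ on connected open sets: if $\omega\subsetneq\tilde\omega$ with $\tilde\omega$ connected and $|\tilde\omega\setminus\omega|>0$, then, by testing the Rayleigh quotient on $\tilde\omega$ with the zero extension of the first eigenfunction of $\omega$ (which is not an eigenfunction on $\tilde\omega$ because it vanishes on a set of positive measure inside $\tilde\omega$), one gets the strict inequality $\lambda_1(\tilde\omega)<\lambda_1(\omega)$.

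Without loss of generality (since $\omega_1$ may a priori be disconnected) fix a connected component $C$ of $\omega_1$ that realizes $\lambda_1(\omega_1)$, i.e.\ $\lambda_1(C)=\lambda_1(\omega_1)$. Since the $\omega_j$ are open and pairwise disjoint, $\omega_1\cap\overline{\omega_j}=\emptyset$ for $j\neq 1$, and hence $C\cap\overline{\omega_j}=\emptyset$ as well. Introduce the open set
\[
\Omega_\star:=\Omega\setminus\bigcup_{j\neq 1}\overline{\omega_j},
\]
which contains $C$, and let $U$ be the connected component of $\Omega_\star$ containing $C$. The core geometric step is to find a boundary point
\[
x_0\in (\partial C\cap U)\quad\text{with}\quad|B_r(x_0)\setminus\omega_1|>0\text{ for all small }r>0,
\]
and then to choose $r>0$ small enough that $B_r(x_0)\subset U$ (so $B_r(x_0)\cap\omega_j=\emptyset$ for $j\neq 1$) and $|B_r(x_0)|\leq\delta$.

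To extract such an $x_0$: since $|\Omega|\geq a>b$, we have $|\Omega\setminus\overline{\omega_1}|\geq |\Omega|-|\omega_1|>0$, so the open set $\Omega\setminus\overline C$ has positive measure inside $U$ (its measure can be bounded below in terms of $\delta$ after removing the $\overline{\omega_j}$, $j\neq 1$). Thus $U\setminus\overline C$ is a nonempty open set. Pick any $p\in U\setminus\overline C$ and any $q\in C$; by path-connectedness of $U$, join them by a continuous path $\gamma\subset U$ and let $x_0=\gamma(t^\star)$ be the first exit point from $C$, so $x_0\in\partial C\cap U$. By the Lebesgue density theorem, almost every point of $\partial C$ (with respect to Hausdorff/Lebesgue density inside $\omega_1^c$) has density $0$ in $\omega_1$, so by a small perturbation of $x_0$ along $\gamma$ we may arrange $|B_r(x_0)\setminus\omega_1|>0$ for small $r$. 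Setting $\tilde\omega_1:=\omega_1\cup B_r(x_0)$, the set $\tilde\omega_1$ is open, the component of $\tilde\omega_1$ containing $C$, call it $\tilde C$, strictly contains $C$ with $|\tilde C\setminus C|>0$, and $\tilde\omega_1\cap\omega_j=\emptyset$ for $j\neq 1$. Moreover $|\tilde\omega_1|\leq|\omega_1|+\delta$, so $(\tilde\omega_1,\omega_2,\ldots,\omega_k)\in\mathcal{P}_a(\Omega)$. By strict monotonicity,
\[
\lambda_1(\tilde\omega_1)\leq\lambda_1(\tilde C)<\lambda_1(C)=\lambda_1(\omega_1),
\]
which yields $\sum_i\lambda_1(\omega_i)>\lambda_1(\tilde\omega_1)+\sum_{i\geq 2}\lambda_1(\omega_i)\geq c_a$, contradicting the optimality of $(\omega_1,\ldots,\omega_k)$.

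The main obstacle is the last geometric step: guaranteeing that $\partial C\cap U$ contains a point $x_0$ at which $\omega_1$ has Lebesgue density strictly less than $1$, so that $|B_r(x_0)\setminus\omega_1|>0$. Everywhere else the argument is elementary; this regularity-like issue is what the use of continuity and, if needed, the Lipschitz regularity of $u_1$ established in Sections \ref{sec:regularity}--\ref{secLip} is meant to handle, since the positivity set $\{u_1>0\}=\omega_1$ inherits mild measure-theoretic niceness from continuity of $u_1$ and density arguments on $\{u_1=0\}\cap\overline{\{u_1>0\}}$.
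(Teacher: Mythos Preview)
Your approach is genuinely different from the paper's and, as written, has a real gap that is not the one you flag at the end. The paper does not enlarge a set geometrically; instead, under the contradiction hypothesis $\sum_i|\omega_i|<a$, it shows that the associated eigenfunctions $(u_1,\ldots,u_k)$ belong to the class $\mathcal{S}_{\lambda_{u_1},\ldots,\lambda_{u_k}}(\Omega)$ (the measure slack allows the deformation of Lemma~\ref{def-complexa} to stay in $H_a$ for test functions supported in arbitrarily small balls), and then invokes Lemma~\ref{lemma:classeS} to conclude that the common zero set has measure zero, forcing $\sum_i|\omega_i|=|\Omega|>a$.

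The central problem with your argument is the claim that $U\setminus\overline C$ is nonempty. Fixing the index $i=1$ is \emph{not} without loss of generality here: the component $C$ of $\omega_1$ realizing $\lambda_1(\omega_1)$ may be completely surrounded by $\bigcup_{j\neq 1}\overline{\omega_j}$. For a concrete picture, take $\Omega=B_3$, $\omega_1=B_1$, $\omega_2=\{1<|x|<2\}$: then $\Omega_\star=\Omega\setminus\overline{\omega_2}=B_1\cup\{2<|x|<3\}$, so $U=B_1=C$ and $\partial C\cap U=\emptyset$. One might hope to fix this by choosing the ``right'' index, but even that is delicate: with $k\ge 3$ one can build nested annular configurations in which every component that achieves the corresponding $\lambda_1(\omega_i)$ is trapped by the closures of the others, while only a non-achieving component touches the empty region; enlarging that component does not decrease the cost. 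Your argument also tacitly needs $\Omega\setminus\bigcup_j\overline{\omega_j}\neq\emptyset$, which amounts to $|\partial\omega_j|=0$; this is not a consequence of Lipschitz continuity of the eigenfunctions alone. (Incidentally, the displayed inequality $|\Omega\setminus\overline{\omega_1}|\ge |\Omega|-|\omega_1|$ goes the wrong way.)

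The density issue you single out is secondary. When an open piece of the empty region is adjacent to $C$, the enlarged component contains a nonempty open set outside $\omega_1$, and then strict monotonicity of $\lambda_1$ is immediate. The hard part is producing such a configuration in the first place, and your argument does not do this; the paper sidesteps the geometry entirely via the class $\mathcal{S}$ and Lemma~\ref{lemma:classeS}.
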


\begin{proof}
Let $(\omega_1,\ldots,\omega_k)\in {\mathcal P}_a(\Omega)$ be an optimal partition, and let $( u_1,\ldots, u_k)$ be a sequence of associated positive first eigenfunctions, which minimizes $J$ in $H_a$ by Proposition \ref{prop:equivalence_levels}. Assume by contradiction that $\sum_{i=1}^k |\omega_i|<a$. 

\smallbreak

\noindent \textbf{Claim.}  $(u_1,\ldots, u_k)\in \mathcal{S}_{\lambda_{u_1},\ldots, \lambda_{u_k}}(\Omega)$.

From \eqref{eq:subsolution}, we have
\[
-\Delta u_i \leq \lambda_{u_i} u_i \text{ in } \Omega\qquad \text{ for every $i$}.
\]
Now we prove that
\[
-\Delta \hat u_1 \geq \lambda_{u_1}u_1-\sum_{j \neq 1} \lambda_{u_j}u_j \text { in } \Omega.
\]
The latter inequality has analogous proof for $i=2,\ldots, k$. Let $\bar \varepsilon$ be such that
\begin{equation}\label{eq:contradiction_measure=a}
|B_{\bar \eps}| <a - \sum_{i=1}^k |\omega_i|.
\end{equation}
Take $x_0\in \Omega$ and $\varepsilon<\bar \varepsilon$ such that $B_{\varepsilon}(x_0)\subset \Omega$. We now check that $-\Delta \hat u_1 \geq \lambda_{u_1} u_1-\sum_{j\neq 1} \lambda_{u_j}u_j$ in $B_\eps(x_0)$. Given $\varphi\in C^\infty_c(B_\eps(x_0))$ nonnegative, for small $t>0$, we consider the deformation
\[
\tilde{u}_t=\left(\tilde{u}_{1,t}, \ldots, \tilde{u}_{k,t}\right)=\left(\frac{\left(\hat{u}_1+t \varphi\right)^{+}}{\left \|\left(\hat{u}_1+t \varphi\right)^{+}\right\|_2}, \frac{\left(\hat{u}_2-t \varphi \right)^{+}}{\left\|\left(\hat{u}_2-t \varphi\right)^+\right\|_2}, \ldots, \frac{\left(\hat{u}_k-t \varphi\right)^{+}}{\left\| \left(\hat{u}_k-t \varphi \right)^+\right\|_2}\right) .
\]
Take $\varepsilon>0$ small such that:
\[
\sum_{j=1}^k |\omega_j| + |B_\eps|<a
\]
 (recall the contradiction assumption \eqref{eq:contradiction_measure=a} and that $\omega_j = \Omega_{u_j}$, for all $j=1, \ldots, k$). Then, by Lemma \ref{def-complexa}, we have $\tilde u_t\in H_a$. We now argue exactly as in the proof of \cite[Lemma 2.1]{ContiTerraciniVerziniOPP}:
\begin{align*}
\sum_{i=1}^k \int_\Omega |\nabla u_i|^2 &=J(u_1,\ldots, u_k)\leq J(u_{1,t},\ldots, u_{k,t})= \sum_{i=1}^k \frac{\displaystyle \int_\Omega |\nabla \tilde u_{i,t}|^2}{\displaystyle \int_\Omega \tilde u_{i,t}^2}\\
&=\frac{\displaystyle \int_\Omega |\nabla (\hat u_1+t\varphi)^+|^2}{\displaystyle \int_\Omega [(\hat u_1+t\varphi)^+]^2} + \sum_{j\neq 1} \frac{\displaystyle \int_\Omega |\nabla (\hat u_{j}-t\varphi)^+|^2}{\displaystyle \int_\Omega [(\hat u_{j}-t\varphi)^+]^2}. 
\end{align*}
As $t\to 0^+$, we have by Lemma \ref{lemma:expansion_L^2}
\begin{align*}
\frac{\displaystyle \int_\Omega |\nabla (\hat u_1+t\varphi)^+|^2}{\displaystyle \int_\Omega [(\hat u_1+t\varphi)^+]^2} &= \int_{\Omega}\left|\nabla\left(\hat{u}_1+t \varphi\right)^{+}\right|^2\left(1-2 t \int_{\Omega} u_1 \varphi+\mathrm{o}(t)\right) \\
&= \int_{\Omega}\left|\nabla\left(\hat{u}_1+t \varphi\right)^{+}\right|^2-2t \int_{\Omega} u_1 \varphi \int_{\Omega}\left|\nabla u_1\right|^2+\mathrm{o}(t)
\end{align*}
and, for $j \geq 2$,
\begin{align*}
\frac{\displaystyle \int_\Omega |\nabla (\hat u_i-t\varphi)^+|^2}{\displaystyle \int_\Omega [(\hat u_i-t\varphi)^+]^2}&= \int_{\Omega}\left|\nabla\left(\hat{u}_j-t \varphi\right)^{+}\right|^2\left(1+2 t \int_{\Omega} u_j \varphi+\mathrm{o}(t)\right) \\
&= \int_{\Omega}\left|\nabla\left(\hat{u}_j-t \varphi\right)^{+}\right|^2+2t \int_{\Omega} u_j \varphi \int_{\Omega}\left|\nabla u_j\right|^2+\mathrm{o}(t).
\end{align*}
Therefore, using the fact that $u_iu_j \equiv 0$ for $i\neq j$, and   $\left(\hat{u}_1+t \varphi\right)^{+}+$ $\sum_{j \geqslant 2}\left(\hat{u}_j-t \varphi\right)^{+}=|\hat{u}_1+t \varphi|$ , we have
$$
\begin{aligned}
\sum_{i=1}^k \int_{\Omega} |\nabla u_i|^2 &\leq  \sum_{i=1}^k \int_{\Omega} |\nabla (\hat{u}_i+t \varphi)^+|^2-2 t \int_{\Omega} \lambda_{u_1} u_1 \varphi+2 t \sum_{j \geqslant 2} \int_{\Omega} \lambda_{u_j} u_j \varphi+\mathrm{o}(t) \\
&=\sum_{i=1}^k \int_{\Omega}|\nabla u_i|^2+2 t \int_{\Omega}\left(\nabla \hat{u}_1\cdot \nabla \varphi-\left(\lambda_{u_1} u_1-\sum_{j \geqslant 2} \lambda_{u_j} u_j\right) \varphi\right)+\mathrm{o}(t)
\end{aligned}
$$
as $t \rightarrow 0^+$, and hence
$$
\int_{\Omega}\left(\nabla \hat{u}_1\cdot \nabla \varphi-\left( \lambda_{u_1} u_1-\sum_{j \geqslant 2} \lambda_{u_j} u_j\right) \varphi\right) \geq 0 .
$$
Therefore the claim holds true.

\smallbreak

\noindent \textit{Conclusion of the proof.} Since $(u_1,\ldots, u_k)\in \mathcal{S}_{\lambda_{u_1},\ldots, \lambda_{u_k}}(\Omega)$, then $\Gamma_{u}$ has zero measure by Lemma \ref{lemma:classeS}.  Therefore $|\Omega|= |\Omega\setminus \Gamma_u|=|\cup_{i=1}^k \Omega_{u_i}|=\sum_{i=1}^k |\Omega_{u_i}|\leq a<|\Omega|$, a contradiction.
\end{proof}

\smallbreak

\begin{Lemma}\label{lemma:aux2} Assume that problem \eqref{eigenvalue_problem2} is achieved by an optimal partition $(\omega_1,\ldots,\omega_k)\in {\mathcal P}_a(\Omega)$. Then:
\[\omega_i \text{ is connected for every  } i=1,\ldots,k.
\]
\end{Lemma}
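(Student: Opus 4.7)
The plan is to argue by contradiction using Lemma \ref{lemma:aux1}. Suppose some component of an optimal partition, say without loss of generality $\omega_1$, fails to be connected. Since $\omega_1$ is open in $\mathbb{R}^N$ and open subsets of $\mathbb{R}^N$ are locally connected, its connected components are themselves open, and we can write $\omega_1 = A_1 \sqcup \cdots \sqcup A_m$ as a disjoint union of nonempty open connected sets with $m \geq 2$. All of these have positive Lebesgue measure.

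Next, I would invoke the standard monotonicity of the first Dirichlet eigenvalue: since the Dirichlet Laplacian on a disjoint open union splits as a direct sum, one has
\[
\lambda_1(\omega_1) = \min_{1 \leq j \leq m} \lambda_1(A_j).
\]
Choose an index $j_0$ realizing this minimum and set $\omega_1' := A_{j_0}$, keeping $\omega_i' := \omega_i$ for $i \geq 2$. Then $(\omega_1', \omega_2, \ldots, \omega_k)$ is a $k$-tuple of nonempty disjoint open subsets of $\Omega$, hence it lies in $\mathcal{P}_a(\Omega)$, and
\[
\sum_{i=1}^k \lambda_1(\omega_i') = \lambda_1(A_{j_0}) + \sum_{i=2}^k \lambda_1(\omega_i) = \lambda_1(\omega_1) + \sum_{i=2}^k \lambda_1(\omega_i) = c_a,
\]
so $(\omega_1', \omega_2, \ldots, \omega_k)$ is itself an optimal partition for \eqref{eigenvalue_problem2}.

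The contradiction now comes from the measure count: since $m \geq 2$, we have
\[
\sum_{i=1}^k |\omega_i'| = |A_{j_0}| + \sum_{i=2}^k |\omega_i| = \sum_{i=1}^k |\omega_i| - \sum_{j \neq j_0} |A_j| < \sum_{i=1}^k |\omega_i| = a,
\]
where in the last equality we have already used Lemma \ref{lemma:aux1} applied to the original optimal partition. Thus we have produced an optimal partition whose total measure is strictly less than $a$, which directly contradicts Lemma \ref{lemma:aux1} applied to $(\omega_1', \omega_2, \ldots, \omega_k)$. Therefore each $\omega_i$ must be connected. The only subtle point is the openness of the components (which follows from local connectedness of $\mathbb{R}^N$) and the spectral identity $\lambda_1(\omega_1) = \min_j \lambda_1(A_j)$; neither step presents real difficulty.
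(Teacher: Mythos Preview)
Your argument is correct and follows essentially the same route as the paper: pick a connected component of $\omega_1$ on which $\lambda_1(\omega_1)$ is realized, observe that replacing $\omega_1$ by that component yields another optimal partition with strictly smaller total measure, and invoke Lemma~\ref{lemma:aux1} for the contradiction. One cosmetic point: an open set need not have finitely many connected components, so the decomposition $\omega_1 = A_1 \sqcup \cdots \sqcup A_m$ should allow $m=\infty$; this changes nothing, since finite total measure together with Faber--Krahn forces $\lambda_1(A_j)\to\infty$ and hence the minimum over components is attained.
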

\begin{proof}
\noindent Assume by contradiction that $\omega_1$ is not connected, and let $A_1$ be a connected component of $\omega_1$ such that $\lambda_1(\omega_1)=\lambda_1(A_1)$. Observe that $\omega_1\setminus A_1$ is open and nonempty. Then $(A_1,\omega_2,\ldots, \omega_k)\in {\mathcal P}_a(\Omega)$ satisfies
\[
\lambda_1(A_1)+\sum_{i=2}^k \lambda_1(\omega_i)=c_a,
\]
\textit{i.e.} it is an optimal partition. On the other hand,
\[
 |A_1|+\sum_{i=2}^k |\omega_i|<\sum_{i=1}^k |\omega_i|=a,
\]
which contradicts the previous lemma.
\end{proof}

\begin{proof}[{Proof of Theorem \ref{thm:main} completed}]
By Proposition \ref{thm:condition_C1}, we have the existence of $(u_1,\ldots, u_k)\in H_a$ which minimizes $J$ over the set $H_a$, that is, the level $\tilde c_a$ is achieved. As it is proved in Section \ref{secLip}, each $\bar u_i$ is locally Lipschitz continuous. Therefore, by Proposition \ref{prop:equivalence_levels}, \eqref{eigenvalue_problem2} has a solution, and problems \eqref{eigenvalue_problem2} and \eqref{main_func} are equivalent. Moreover, from Proposition \ref{lemma:aux1}, we have that solutions to \eqref{eigenvalue_problem2} are also minimizers to \eqref{eigenvalue_problem}, and these levels coincide. 
\end{proof}

\begin{proof}[Proof of Theorem \ref{th-bolas}]
The proof is a consequence of the Faber-Krahn inequality: given an open set $\omega\subset \R^N$, then $\lambda_1(\omega)\geq \lambda_1(\omega^*)$, where $\omega^*$ is an open ball such that $|\omega^*|=|\omega|$; moreover, equality is achieved if and only if $\omega$ is a ball.

Let $(\omega_1\ldots, \omega_k)\in \mathcal{P}_a(\Omega)$ be an optimal partition for problem \eqref{eigenvalue_problem2} (which exists, by Theorem \ref{thm:main}). Let $R_{r_i}$ be an open ball such that $|B_{r_i}|=|\omega_i|$, for each $i$. If $a$ is sufficiently small, then we can assume that 
\[
B_{r_i}\cap B_{r_j}=\emptyset \quad \forall i\neq j,\qquad \text{ and } \qquad \cup_{i=1}^k B_{r_i}\subset \Omega.
\]
By the Faber-Krahn inequality we have that $(B_{r_1},\ldots, B_{r_k})$ is an optimal partition and, up to translation of the center, we may assume that $\omega_i=B_{r_i}$. 

We now claim that $r_1=\ldots=r_k$, which finishes the proof. But this is a consequence of the fact that the function:
\[
(r_1,\ldots,r_k)\in (\R^+)^k\mapsto \sum_{i=1}^k \lambda_1(B_{r_i})=\lambda_1(B_1) \sum_{i=1}^k \frac{1}{r_i^2}
\]
admits a unique minimizer on the set
\[
\left\{(r_1,\ldots,r_k)\in \R^k: \sum_{i=1}^k |B_{r_i}|=a\right\}=\left\{(r_1,\ldots,r_k)\in \R^k: |B_1|\sum_{i=1}^k {r_i}^N=a\right\}
\]
precisely at a point where $r_1=\ldots=r_k$, by the Lagrange multipliers rule.
\end{proof}

\begin{proof}[Proof of Theorem \ref{th-simetria}]
Let $(\omega_1, \omega_2)$ be a solution of \eqref{eigenvalue_problem} with corresponding eigenfunctions $u_1, u_2$. Then, consider $\omega_1^*$ and $\omega_2^*$ the cap symmetrization of $\omega_1$ and $\omega_2$, with respect $e$ and $-e$, respectively. Then $(\omega_1^*, \omega_2^*)\in {\mathcal P}_a(\Omega)$ and, since $\lambda_1^*(\omega_i^*) \leq \lambda_1(\omega_i)$, for $i=1,2$, see \cite[Section 7.5]{BaernsteinBook}, then $(\omega_1^*, \omega_2^*)$ solves \eqref{eigenvalue_problem}. Moreover, the positive first eigenfunctions in $\omega_1^*$ and $\omega_2^*$ are foliated Schwarz symmetric with respect to $e$ and $-e$, respectively. 
\end{proof}

\appendix

\section{Auxiliary results} \label{appendix}

\subsection{Deformations}

Here we collect some results regarding the deformations used in the paper. This type of deformation appears in the context of multiphase optimal shape problems without the volume constraints. We refer the reader to \cite{ContiTerraciniVerziniAsymptotic,ContiTerraciniVerziniOPP,ContiTerraciniVerziniVariation}.

\begin{Lemma}\label{lemma:expansion_L^2} Let $u \in L^2(\Omega)$ with $u^+\not\equiv 0$. Then, for all $\varphi \in L^2(\Omega)$, 
\begin{equation}\label{inverse-norm-lemma}
\displaystyle \frac{1}{\|(u \pm t\varphi)^{+}\|_2^2 }= \dfrac{1}{\|u^+\|_2^2} \mp \frac{2t}{\|u^+\|_2^4} \int_{\Omega}u^+\varphi + O(1)\,  \|\varphi\|_2^2 \, t^2\qquad \text{ as } t\to 0^+,\vspace{10pt}
\end{equation}
where $O(1)$ depends only on $\|u^+\|_ 2$.
\end{Lemma}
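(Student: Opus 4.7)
The plan is to first derive an $L^2$ expansion for $\|(u \pm t\varphi)^+\|_2^2$ itself (which is a quadratic-in-$t$ statement about the numerator) and only then invert by the elementary identity $1/(1+\alpha)=1-\alpha+\alpha^2/(1+\alpha)$ valid for $|\alpha|$ small.

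\textbf{Step 1: pointwise estimate.} Consider the function $h(a)=(a^+)^2$, which is of class $C^{1,1}(\mathbb R)$ with $h'(a)=2a^+$. I claim that for all $a,b\in\mathbb R$,
\[
\bigl|\,(a+b)^{+2}-(a^+)^2-2a^+b\,\bigr|\leq 3\,b^2.
\]
This is checked by a short case analysis on the four sign possibilities for $a$ and $a+b$; the non-trivial cases (sign change between $a$ and $a+b$) use that $|a|\leq|b|$ in that regime, so every leftover term is controlled by a multiple of $b^2$. Applying this pointwise with $a=u(x)$, $b=\pm t\varphi(x)$ and integrating gives
\[
\Bigl|\,\|(u\pm t\varphi)^+\|_2^2-\|u^+\|_2^2\mp 2t\!\int_\Omega u^+\varphi\,\Bigr|\leq 3\,t^2\|\varphi\|_2^2.
\]

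\textbf{Step 2: inversion.} Write $\|(u\pm t\varphi)^+\|_2^2=\|u^+\|_2^2(1+\alpha_t^\pm)$ with
\[
\alpha_t^\pm=\frac{\mp 2t\!\int_\Omega u^+\varphi+R_t^\pm}{\|u^+\|_2^2},\qquad |R_t^\pm|\leq 3t^2\|\varphi\|_2^2.
\]
Since $|\int_\Omega u^+\varphi|\leq \|u^+\|_2\|\varphi\|_2$ by Cauchy--Schwarz, one has $|\alpha_t^\pm|\leq C(\|u^+\|_2)\,t\,\|\varphi\|_2(1+t\|\varphi\|_2)$, so $|\alpha_t^\pm|\leq 1/2$ for $t$ small enough (how small depends on $\|u^+\|_2$ and $\|\varphi\|_2$; this is harmless since we are stating an asymptotic as $t\to 0^+$ with $\varphi$ fixed). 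Then
\[
\frac{1}{\|(u\pm t\varphi)^+\|_2^2}=\frac{1}{\|u^+\|_2^2}\Bigl(1-\alpha_t^\pm+\frac{(\alpha_t^\pm)^2}{1+\alpha_t^\pm}\Bigr).
\]

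\textbf{Step 3: collect.} The linear-in-$t$ contribution coming from $-\alpha_t^\pm/\|u^+\|_2^2$ is exactly $\mp(2t/\|u^+\|_2^4)\int_\Omega u^+\varphi$, which is the advertised first-order term. The remaining contribution from $R_t^\pm/\|u^+\|_2^4$ is at most $3t^2\|\varphi\|_2^2/\|u^+\|_2^4$, and the quadratic remainder $(\alpha_t^\pm)^2/(1+\alpha_t^\pm)/\|u^+\|_2^2$ is controlled via Cauchy--Schwarz by $C(\|u^+\|_2)\,t^2\|\varphi\|_2^2$. Summing the two gives the desired $O(1)\,\|\varphi\|_2^2\,t^2$ error with $O(1)$ depending only on $\|u^+\|_2$.

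The only mildly delicate point is Step~1, because $(a+b)^+$ is not differentiable at the point where $a+b$ changes sign. Doing Taylor in $b$ naively fails; the right move is the case analysis, where the worst case ($a>0$, $a+b\le 0$) forces $a\le |b|$ and thus $a^2\le b^2$, which is exactly what absorbs the apparently larger $2a^+b$ term into the $O(b^2)$ bound.
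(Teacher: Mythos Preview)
Your proof is correct and slightly different in organization from the paper's. The paper computes the difference $\frac{1}{\|(u\pm t\varphi)^+\|_2^2}-\frac{1}{\|u^+\|_2^2}$ directly via a common denominator and then expands the numerator $\|u^+\|_2^2-\|(u\pm t\varphi)^+\|_2^2$ by splitting $\Omega$ into the four regions determined by the signs of $u$ and $u\pm t\varphi$, bounding the sign-change pieces by $t^2\|\varphi\|_2^2$ because on those sets $|u|\le t|\varphi|$. You instead isolate the same key estimate as a clean pointwise inequality for the $C^{1,1}$ function $h(a)=(a^+)^2$, and then handle the inversion separately via $1/(1+\alpha)=1-\alpha+\alpha^2/(1+\alpha)$. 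The underlying idea---that the sign-change regions contribute only at order $t^2$---is identical; your packaging is more modular and makes the dependence of the $O(1)$ constant on $\|u^+\|_2$ more transparent, while the paper's version keeps everything in one computation at the cost of slightly heavier bookkeeping.
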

\begin{proof}
Observe that
\begin{align*}
 \dfrac{1}{\|(u \pm t\varphi)^{+}\|_2^2 } &- \dfrac{1}{\|u^+\|_2^2} = \displaystyle \dfrac{\int_{u>0 \cap \Omega_{\varphi}}u^2 - \int_{u\pm t \varphi>0\cap \Omega_{\varphi}} u^2 \pm 2t u \varphi + t^2 \varphi^2}{\|(u \pm t\varphi)^{+}\|_2^2 \, \|u^+\|_2^2}  \\
&= \displaystyle\dfrac{\int_{u>0 \cap \Omega_{\varphi}}u^2 - \int_{u\pm t \varphi>0\cap \Omega_{\varphi}} u^2 \mp 2t \int_{u\pm t \varphi>0\cap \Omega_{\varphi}}  u \varphi - t^2\int_{u\pm t \varphi>0\cap \Omega_{\varphi}} \varphi^2}{\|(u \pm t\varphi)^{+}\|_2^2 \, \|u^+\|_2^2}  \\
&=\displaystyle\dfrac{\int_{0< u \leq \mp t \varphi \cap\{ \pm \varphi <0 \}}u^2 - \int_{\mp t \varphi< u < 0 \cap\{ \pm \varphi >0 \}}u^2 - t^2\int_{u\pm t \varphi>0\cap \Omega_{\varphi}} \varphi^2  }{\|(u \pm t\varphi)^{+}\|_2^2 \, \|u^+\|_2^2} \mp 2t \dfrac{\int_{u\pm t \varphi>0\cap \Omega_{\varphi}}  u \varphi}{\|(u \pm t\varphi)^{+}\|_2^2 \, \|u^+\|_2^2}  \\
& =  \mp 2t \dfrac{\int_{u>0\cap \Omega_{\varphi}}  u \varphi}{\|(u \pm t\varphi)^{+}\|_2^2 \, \|u^+\|_2^2} \mp 2t \dfrac{\int_{u\pm t \varphi>0\cap \Omega_{\varphi}}  u \varphi- \int_{u>0\cap \Omega_{\varphi}}  u \varphi}{\|(u \pm t\varphi)^{+}\|_2^2 \, \|u^+\|_2^2} + O(1)\, \|\varphi\|^2_{2} \, t^2\\
&=\displaystyle \mp \dfrac{2 t}{\|u^+\|_2^4 } \int_{\Omega}u^+ \varphi + O(1)\, \|\varphi\|^2_{2} \, t^2 \qquad \text{ as } t\to 0^+.  \qedhere
\end{align*}
\end{proof}

\begin{Lemma}\label{def-simples}
Let $u_1,\ldots, u_k \in L^2(\Omega)$ be nonnegative functions such that $u_i\cdot u_j\equiv 0$ for all $i\neq j$, $\|u_i\|_{L^2}=1$ for all $i=1, \ldots, k$, and $\varphi\in C^\infty_c(\Omega)$ be a nonnegative function. Consider, for $t>0$ small, the deformation
\begin{equation}
\check{u}_{t}=(\check{u}_{1,t}, \check{u}_{2,t}, \ldots, \check{u}_{k,t})=\left(\frac{(u_1 - t\varphi)^+}{\|(u_1 - t\varphi)^+\|_2}, u_2, \ldots, u_k\right) .
\end{equation}
Then:
\begin{enumerate}[i)]
\item $\displaystyle \int_\Omega \check{u}_{i,t}^2=1$ for every $i$;
\item $\Omega_{\check u_{i,t}}\subseteq \Omega_{u_i}$ for all $i\geq 1$;
\item $\check u_{i,t}\cdot  \check u_{j,t} \equiv 0$ for $i\neq j$.
\end{enumerate}
\end{Lemma}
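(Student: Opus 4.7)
The verification is essentially a bookkeeping exercise; the only mild subtlety is ensuring that the normalization in the first component is well defined, i.e.\ that $(u_1-t\varphi)^+\not\equiv 0$ for small $t>0$, and this should be addressed at the outset.

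\textbf{Well-posedness of the deformation.} First I would observe that since $\|u_1\|_2=1$, the set $\{u_1>0\}$ has positive measure, and in fact for every $\eta>0$ sufficiently small the set $\{u_1>\eta\}$ has positive measure (otherwise $u_1$ would be a.e.\ zero). Choosing any $\eta\in(0,1)$ with $|\{u_1>\eta\}|>0$ and restricting to $t<\eta/(\|\varphi\|_\infty+1)$, one has $u_1-t\varphi>0$ on a set of positive measure, so $\|(u_1-t\varphi)^+\|_2>0$ and $\check u_{1,t}$ is well defined.

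\textbf{Item (i).} For $i\geq 2$, by definition $\check u_{i,t}=u_i$, and therefore $\int_\Omega \check u_{i,t}^2=\|u_i\|_2^2=1$. For $i=1$, the normalization gives directly $\int_\Omega \check u_{1,t}^2=\|(u_1-t\varphi)^+\|_2^{-2}\int_\Omega [(u_1-t\varphi)^+]^2=1$.

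\textbf{Item (ii).} For $i\geq 2$, equality $\Omega_{\check u_{i,t}}=\Omega_{u_i}$ is immediate. For $i=1$, using that $\varphi\geq 0$,
\[
\Omega_{\check u_{1,t}}=\{(u_1-t\varphi)^+\neq 0\}=\{u_1>t\varphi\}\subseteq\{u_1>0\}=\Omega_{u_1},
\]
because at any point where $u_1(x)>t\varphi(x)\geq 0$ one has $u_1(x)>0$.

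\textbf{Item (iii).} When both indices are $\geq 2$, $\check u_{i,t}\cdot\check u_{j,t}=u_i\cdot u_j\equiv 0$ by assumption. When $i=1$ and $j\geq 2$, the product $\check u_{1,t}\cdot u_j$ vanishes outside $\Omega_{\check u_{1,t}}$, and by (ii) we have $\Omega_{\check u_{1,t}}\subseteq \Omega_{u_1}$, so it vanishes also on $\Omega_{u_j}^c\cup(\Omega_{u_1}\cap\Omega_{u_j})$; but by hypothesis $u_1\cdot u_j\equiv 0$ means that $\Omega_{u_1}\cap\Omega_{u_j}$ has measure zero, which concludes the argument.

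No single step is truly an obstacle here; the whole content of the lemma lies in checking that restricting the first component to $(u_1-t\varphi)^+$ and renormalizing does not disturb the three defining features of the admissible class, and the only care needed is the preliminary nondegeneracy check that $(u_1-t\varphi)^+\not\equiv 0$ for small $t$.
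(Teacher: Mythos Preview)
Your proof is correct and follows essentially the same approach as the paper: (i) is immediate from the normalization, (ii) follows since $\varphi\geq 0$ forces $\{u_1>t\varphi\}\subseteq\{u_1>0\}$, and (iii) is a consequence of (ii) together with the disjoint-support hypothesis. The paper's proof is considerably terser (three lines), and you have simply filled in the details; your added well-posedness check that $(u_1-t\varphi)^+\not\equiv 0$ for small $t$ is a reasonable preliminary that the paper leaves implicit in the phrase ``for $t>0$ small.''
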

\begin{proof}
It is obvious that i) holds and iii) is a consequence of ii). Regarding ii), observe that
\[
x\notin \Omega_{u_i} \implies  \check u_{1,t} (x)=0. \qedhere
\]
\end{proof}

Recall that, for $u_1,\ldots,u_k$ such that $u_i\cdot u_j\equiv 0$ for all $i\neq j$, we denote:
\[
\hat{u}_i=u_i-\sum_{j\neq i} u_j.
\]
\begin{Lemma}\label{def-complexa}
Let $u_1,\ldots, u_k \in L^2(\Omega)$ be such that $u_i\cdot u_j\equiv 0$ for all $i\neq j$, and $u_i\geq 0$ and $\|u_i\|_{L^2}=1$ for every $i$. Take $\mathcal{A}\Subset \Omega$, and let $\varphi\in C^\infty_c(\mathcal{A})$ be a nonnegative function. Consider, for $t>0$ small, the deformation
\[
\tilde{u}_t=\left(\tilde{u}_{1,t}, \ldots, \tilde{u}_{k,t}\right)=\left(\frac{\left(\hat{u}_1+t \varphi\right)^{+}}{\left \|\left(\hat{u}_1+t \varphi\right)^{+}\right\|_2}, \frac{\left(\hat{u}_2-t \varphi \right)^{+}}{\left\|\left(\hat{u}_2-t \varphi\right)^+\right\|_2}, \ldots, \frac{\left(\hat{u}_k-t \varphi\right)^{+}}{\left\| \left(\hat{u}_k-t \varphi \right)^+\right\|_2}\right) .
\]
Then:
\begin{enumerate}[i)]
\item $\displaystyle \int_\Omega \tilde u_{i,t}^2=1$ for every $i$;
\item $\Omega_{\tilde u_{1,t}}\subseteq \Omega_{u_1}\cup \mathcal{A}$, and $\Omega_{\tilde u_{i,t}}\subseteq \Omega_{u_i}$ for $i>1$. 
\item $\tilde u_{i,t}\cdot  \tilde u_{j,t} \equiv 0$ for $i\neq j$. 
\end{enumerate}
\end{Lemma}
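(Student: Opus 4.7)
The plan is to verify the three conclusions in order, noting that (i) is immediate from the explicit normalization in the definition of $\tilde u_t$ (each component of $\tilde u_t$ is a unit $L^2$ vector by construction, provided each $(\hat u_i \pm t\varphi)^+$ is nontrivial, which holds for $t>0$ small since $\|u_i\|_2=1$ and $\hat u_i$ agrees with $u_i$ on $\Omega_{u_i}$). So the genuine work is in (ii) and (iii).

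For (ii), the strategy is a pointwise analysis of the sign of $\hat u_i \pm t\varphi$ using only the hypotheses $u_i \geq 0$, $u_i u_j \equiv 0$ and $\varphi \geq 0$. For $i>1$, if $x\notin \Omega_{u_i}$ then $u_i(x)=0$, so by the disjoint support assumption $\hat u_i(x) = -\sum_{j\neq i} u_j(x) \leq 0$; subtracting $t\varphi(x)\geq 0$ leaves a nonpositive quantity, whence $(\hat u_i - t\varphi)^+(x)=0$. This gives $\Omega_{\tilde u_{i,t}} \subseteq \Omega_{u_i}$. For $i=1$, the same computation shows that if $x \notin \Omega_{u_1} \cup \mathcal{A}$ then $\varphi(x)=0$ and $\hat u_1(x) \leq 0$, forcing $(\hat u_1 + t\varphi)^+(x)=0$.

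For (iii), if $i,j > 1$ with $i\neq j$, the inclusions in (ii) together with $u_i u_j \equiv 0$ give $\Omega_{\tilde u_{i,t}} \cap \Omega_{\tilde u_{j,t}} \subseteq \Omega_{u_i} \cap \Omega_{u_j} = \emptyset$. The main point that requires argument is the pair $(1,j)$ for $j>1$, because the enlargement by $\mathcal{A}$ in (ii) allows $\Omega_{\tilde u_{1,t}}$ and $\Omega_{u_j}$ to overlap a priori. I would handle this by observing that $\tilde u_{1,t}(x) > 0$ demands $u_1(x) + t\varphi(x) > \sum_{\ell \neq 1} u_\ell(x)$, whereas $\tilde u_{j,t}(x) > 0$ demands $u_j(x) > t\varphi(x) + \sum_{\ell \neq j} u_\ell(x)$; adding these two strict inequalities and cancelling the common terms yields $0 > 0$, a contradiction. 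Hence the supports are disjoint.

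The only mildly subtle step, and thus the one I expect to be the key obstacle, is this last contradiction on $\mathcal{A} \cap \Omega_{u_j}$: it is the one place where the geometry of the deformation (one component gaining mass, the others losing it) interacts nontrivially with the enlargement of the positivity set of the first component. Everything else reduces to bookkeeping with positive parts and the disjoint-support hypothesis.
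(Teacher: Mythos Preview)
Your proposal is correct and follows essentially the same route as the paper: (i) is dismissed as immediate from the normalization, (ii) is done by the same pointwise sign analysis of $\hat u_i \pm t\varphi$, and (iii) is handled by the same contradiction argument on a hypothetical common support point, combining the two strict inequalities coming from $\hat u_1 + t\varphi > 0$ and $\hat u_j - t\varphi > 0$. One tiny remark: after adding and cancelling, what literally remains is $0 > 2\sum_{\ell \neq 1,j} u_\ell(x)$, which is already a contradiction since each $u_\ell \geq 0$; alternatively (and this is how the paper phrases it), from (ii) you know $u_j(x)>0$, so disjointness of supports forces $u_\ell(x)=0$ for all $\ell\neq j$ and you do get $0>0$.
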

\begin{proof} The statement i) is obviously true. Regarding ii), we have
\[
\hat u_1(x)+t\varphi(x)>0 \implies u_1(x)+t\varphi(x)>\sum_{j\neq 1} u_j(x)\geq 0 \implies u_1(x)>0 \text{ or } \varphi(x)>0.
\]
For $i>1$,
\[
 \hat u_i(x)-t\varphi(x)>0 \implies u_i(x)>\sum_{j\neq i} u_j(x)+t\varphi(x)\geq 0 \implies u_i(x)>0.
\]

Finally, for iii), since $u_i\cdot u_j \equiv 0$ for $i \neq j$, it is obvious that $\tilde u_{i,t} \cdot \tilde u_{j,t} \equiv 0$ for $i\neq j$ with $i, j \geq 2$. Now, by contradiction, suppose that there exists $x\in \Omega_{\tilde u_{1,t}}\cap \Omega_{\tilde u_{i,t}}\neq \emptyset$, for some $i>1$. Then

\begin{align*}
\hat u_1(x)+t\varphi(x)>0 \text{ and }  \hat u_i(x)-t\varphi(x)>0 & \iff \sum_{j\neq 1} u_j(x) <u_1(x)+t\varphi(x)< u_i(x)-\sum_{j\neq 1,i}u_j(x)\\
			&\implies u_i(x) > u_i(x) + 2 \sum_{j\neq 1,i} u_j(x) = u_i(x),
\end{align*}
a contradiction. Notice that for the last equality we have used that $u_i\cdot u_j \equiv 0$ for $i \neq j$ and $u_i \geq 0$ for every $i$.

\end{proof}

\subsection{Auxiliary lemmas}

We recall the following Liouville type theorem for subharmonic functions.
\begin{Proposition}\label{prop:Liouville}
 Assume that $u_1,\ldots, u_k\in  H^1_{loc}(\R^N)\cap C(\R^N)$ are nonnegative  subharmonic functions such that $u_i\cdot u_j\equiv 0$ in $\R^N$. Assume moreover that $u_1,\ldots, u_k$ are bounded.
Then all functions but possibly one function are trivial.
\end{Proposition}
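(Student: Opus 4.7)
\smallbreak

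The plan is to argue by contradiction via the Alt-Caffarelli-Friedman (ACF) monotonicity formula. Suppose two components, say $u_1$ and $u_2$, are both nontrivial. Since $u_1\cdot u_2\equiv 0$ and both are nonnegative and continuous, the positivity sets $A_i=\{u_i>0\}$ are disjoint nonempty open sets. Connectedness of $\R^N$ prevents $A_1\cup A_2=\R^N$ (such a partition is ruled out for a connected space), so we may pick $x_0\notin A_1\cup A_2$, which gives $u_1(x_0)=u_2(x_0)=0$. Moreover, neither $u_i$ can be a positive constant (for then $u_i\cdot u_j\equiv 0$ would force the other components to vanish), so both $u_1,u_2$ are genuinely nonconstant bounded subharmonic functions.

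Next I would invoke the ACF monotonicity formula at $x_0$: for such a pair of nonnegative continuous subharmonic functions with disjoint supports both vanishing at $x_0$, the quantity
\[
\Phi(r):=\frac{1}{r^4}\left(\int_{B_r(x_0)}\frac{|\nabla u_1|^2}{|x-x_0|^{N-2}}\,dx\right)\left(\int_{B_r(x_0)}\frac{|\nabla u_2|^2}{|x-x_0|^{N-2}}\,dx\right)
\]
is non-decreasing in $r>0$ (the finiteness of the weighted integrals near $x_0$ coming from Hölder regularity of bounded subharmonic functions combined with $u_i(x_0)=0$). Since each $u_i$ is nonconstant, $\nabla u_i\not\equiv 0$, so both weighted integrals are strictly positive once $r$ is large enough to capture a point where $\nabla u_i\neq 0$; fix such $r_0>0$. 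By monotonicity, $\Phi(r)\geq \Phi(r_0)>0$ for every $r\geq r_0$.

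To close the argument I would show $\Phi(r)\to 0$ as $r\to\infty$, contradicting the previous lower bound. Using Caccioppoli on a dyadic annulus for the bounded subharmonic function $u_i$ (with $M:=\max_i\|u_i\|_\infty$),
\[
\int_{B_{2R}(x_0)\setminus B_R(x_0)}|\nabla u_i|^2\,dx\leq \frac{C}{R^2}\int_{B_{4R}(x_0)\setminus B_{R/2}(x_0)}u_i^2\,dx\leq CM^2R^{N-2},
\]
and dividing by the weight $|x-x_0|^{-(N-2)}\sim R^{-(N-2)}$ on that annulus gives a uniform bound $CM^2$ on each dyadic piece of the weighted integral. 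Summing from a fixed inner scale up to $r$ yields
\[
\int_{B_r(x_0)}\frac{|\nabla u_i|^2}{|x-x_0|^{N-2}}\,dx\leq C+CM^2\log r,
\]
whence $\Phi(r)\leq C(\log r)^2/r^4\to 0$, contradicting $\Phi(r)\geq\Phi(r_0)>0$.

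The principal obstacle is obtaining a growth rate for the weighted Dirichlet integral that is strictly slower than $r^2$: this is exactly where global $L^\infty$ bounds on $u_i$ (rather than just $L^2$-type growth) must be used, and it is what makes the ACF product functional decay faster than $\Phi$ can stay positive. The rest of the argument—locating a common zero $x_0$ and controlling the behavior of $\Phi$ near $r=0$ via Hölder regularity—is comparatively standard, so the sharp growth estimate on the weighted energy is the crux of the proof.
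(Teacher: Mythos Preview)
Your approach via the ACF monotonicity formula is correct and is the standard route; the paper itself does not give a self-contained argument but simply invokes \cite[Proposition~2.2]{NTTV1} with growth exponent $\alpha=0$, whose proof is precisely an ACF argument of the type you sketch (boundedness of the $u_i$ forces the ACF functional to decay at infinity, contradicting monotonicity and positivity).

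One small correction: your justification for the finiteness of $\Phi(r)$ near $r=0$ via ``H\"older regularity of bounded subharmonic functions'' is not right---bounded subharmonic functions are not H\"older in general. However, finiteness of the weighted Dirichlet integrals $\int_{B_r(x_0)}|\nabla u_i|^2\,|x-x_0|^{2-N}\,dx$ is a standard part of the ACF theory and follows instead from integrating $|\nabla u_i|^2$ against the fundamental solution and using $u_i(x_0)=0$, $\Delta u_i\geq 0$ together with $u_i\in H^1_{loc}$ (see e.g.\ the original Alt--Caffarelli--Friedman paper or the Caffarelli--Salsa monograph). With that replacement, your argument goes through as written: the dyadic Caccioppoli estimate on annuli is valid for nonnegative subharmonic functions in $H^1_{loc}$, yields the $O(\log r)$ bound on each weighted integral, and hence $\Phi(r)\to 0$, giving the contradiction.
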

\begin{proof}
The result follows directly from \cite[Proposition 2.2]{NTTV1} applied with $\alpha=0$. We observe that, even though this case is not stated in the proposition, the proof is exactly the same.
\end{proof}

The next useful inequality is used to prove the continuity of the solutions and its proof can be found in \cite{GiTr, GS1996}.

\begin{Proposition}\label{prop_Con}
Let $B_{r_0}(x_0) \subset \Omega$, $u \in H^1(B_{r_0}(x_0))$ and suppose $\Delta u$ is a measure satisfying 
\begin{equation}\label{eq_22}
\int_0^rs^{1-N}\left[\int_{B_s(x_0)} d |\Delta u|\right]ds < + \infty,
\end{equation}
for all $r \in (0,r_0)$. Then, the limit $\displaystyle\lim_{\rho}\dashint_{\partial B_{\rho}(x_0)}u$ exists, and we can define
\[
u(x_0) = \lim_{\rho}\dashint_{\partial B_{\rho}(x_0)}u.
\]
In addition, for all $r \in (0, r_0)$ 
\begin{equation}\label{eq_con}
\dashint_{\partial B_r(x_0)} [u- u(x_0)] = C(N)\int_0^r s^{1-N}\left[\int_{B_s(x_0)} d\Delta u \right]ds.
\end{equation}
The inequality in \eqref{eq_22} is also true in the case where $u \in L^{\infty}(B_{r_0}(x_0))$, and there exists $f \in  L^{\infty}(B_{r_0}(x_0))$ such that $-\Delta u^+ \leq f$ and $-\Delta u^- \leq f$.
\end{Proposition}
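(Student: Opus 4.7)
My plan is to first establish the formula for smooth $u$ by differentiating the spherical average, then extend to the general distributional setting via mollification, and finally handle the addendum by a subharmonic-correction argument.

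For $u\in C^\infty(B_{r_0}(x_0))$, setting $\phi(r):=\dashint_{\partial B_r(x_0)}u$, the divergence theorem gives
\[
\phi'(r) = \frac{1}{|\partial B_r|}\int_{\partial B_r(x_0)}\partial_\nu u\, d\mathcal{H}^{N-1} = \frac{1}{N|B_1|\, r^{N-1}}\int_{B_r(x_0)}\Delta u\, dx.
\]
Integrating from $\rho$ to $r$ and letting $\rho\to 0^+$ (by continuity of $u$ at $x_0$) yields \eqref{eq_con} with $C(N)=1/(N|B_1|)$. For general $u\in H^1(B_{r_0}(x_0))$ with $\Delta u$ a signed Radon measure satisfying \eqref{eq_22}, I would set $u_\varepsilon := u*\rho_\varepsilon$, smooth on any ball compactly contained in $B_{r_0}(x_0)$, and apply the smooth formula to $u_\varepsilon$ between radii $\rho$ and $r$. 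As $\varepsilon\to 0^+$, Fubini and $H^1$-trace theory yield convergence of the spherical averages for a.e.\ $r$ and $\rho$, while weak-$\ast$ convergence of measures gives $\int_{B_s}\Delta u_\varepsilon\,dx\to (\Delta u)(B_s)$ for a.e.\ $s$; the integrable majorant $s^{1-N}|\Delta u|(B_r)$ supplied by \eqref{eq_22} justifies dominated convergence in the iterated integral. Finally, letting $\rho\to 0^+$, \eqref{eq_22} forces the right-hand side to converge, hence so does the left-hand side, and one \emph{defines} $u(x_0)$ as this limit; \eqref{eq_con} follows.

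For the addendum, the distributions $\mu^\pm := \Delta u^\pm + f$ are nonnegative by hypothesis, hence Radon measures, so $\Delta u = \Delta u^+ - \Delta u^- = \mu^+ - \mu^-$ is a well-defined signed measure. To verify \eqref{eq_22}, I introduce the corrections
\[
w^\pm(x) := u^\pm(x) + \frac{\|f\|_\infty}{2N}|x-x_0|^2,
\]
which are bounded and satisfy $\Delta w^\pm = \mu^\pm + (\|f\|_\infty - f)\geq \mu^\pm \geq 0$, i.e., $w^\pm$ are bounded and subharmonic. Applying the smooth-case formula to the mollifications $w^\pm_\varepsilon$ (still bounded and subharmonic, with the same $L^\infty$ bound), the left-hand side is bounded in absolute value by $2\|w^\pm\|_\infty$, and Fatou's lemma passes this bound through the weak-$\ast$ limit:
\[
\frac{1}{N|B_1|}\int_0^r s^{1-N}(\Delta w^\pm)(B_s(x_0))\, ds \leq 2\|w^\pm\|_\infty <\infty.
\]
Since $\mu^\pm\leq \Delta w^\pm$ and $|\Delta u|\leq \mu^++\mu^-$, this yields \eqref{eq_22} for $u$, after which the first part applies to give \eqref{eq_con}.

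The main technical obstacle will be the careful choice of radii in the passage to the limit: the left-hand side converges only along radii at which the $H^1$-trace is continuous in $\varepsilon$, and the right-hand side only along radii $s$ with $|\Delta u|(\partial B_s(x_0))=0$, so that weak-$\ast$ convergence passes through $\chi_{B_s}$. Each condition fails only on a countable set, so the identity holds on a dense set of radii and extends to every $r\in(0,r_0)$ by continuity of both sides in $r$. The other delicate point is justifying Fatou for the nonnegative measures $\Delta w^\pm_\varepsilon$ in the addendum, which is standard once one exploits the monotonicity of $s\mapsto(\Delta w^\pm_\varepsilon)(B_s)$ and the lower semicontinuity of mass on open sets.
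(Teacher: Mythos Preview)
The paper does not actually prove this proposition; it is stated in the appendix as an auxiliary result with the remark that ``its proof can be found in \cite{GiTr, GS1996}.'' Your approach---differentiating the spherical mean via the divergence theorem in the smooth case, passing to the distributional setting by mollification with the limit taken first in $\varepsilon$ at fixed $\rho,r$ and then in $\rho\to 0$ using the integrability hypothesis \eqref{eq_22}, and handling the addendum by the quadratic correction $w^\pm=u^\pm+\tfrac{\|f\|_\infty}{2N}|x-x_0|^2$ to reduce to bounded subharmonic functions---is precisely the classical argument underlying those references and is correct. One small point of phrasing: the ``integrable majorant $s^{1-N}|\Delta u|(B_r)$'' you invoke is only integrable on $[\rho,r]$, not on $[0,r]$, for $N\geq 2$; this is harmless since you correctly separate the limits $\varepsilon\to 0$ (at fixed $\rho>0$) and $\rho\to 0$, but it would be worth stating explicitly to avoid confusion.
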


In the proof of Lipschitz regularity, we make use of the Caffarelli-Jerison-Kenig Monotonicity Lemma that we state next. For a proof of this result, we refer the reader to \cite{CJK2002} (see also \cite{ContiTerraciniVerziniVariation,Velichkovpaper}).

\begin{Lemma}[Caffarelli-Jerison-Kenig monotonicity lemma]\label{jck_lemma} Let $u_1, u_2 \in H_0^1(B_{r_0}(x_0))\cap L^\infty(B_{r_0}(x_0))$ with $u_1\cdot u_2 =0$ a.e. in $\Omega$. Suppose that, for some constant $\gamma \geq 0$, 
\[
\Delta u_1 \geq - \gamma \;\;\mbox { and } \;\; \Delta u_2 \geq - \gamma \;\; \mbox{ in }\; B_{r_0}(x_0).
\]
Set 
\[
\psi(r) := \left(\dfrac{1}{r^2}\int_{B_r(x_0)}\dfrac{|\nabla u_1|^2}{|x-x_0|^{N-2}}\right) 
\left(\dfrac{1}{r^2}\int_{B_r(x_0)}\dfrac{|\nabla u_2|^2}{|x-x_0|^{N-2}}\right).
\]
\smallskip
Then, there exists a constant $C > 0$ such that $\psi(r) \leq C$ for all $r \in (0, r_0/2)$.
\end{Lemma}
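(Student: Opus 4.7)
The plan is to adapt the classical Alt–Caffarelli–Friedman (ACF) monotonicity formula, which establishes that $\psi(r)$ is non-decreasing when $u_1,u_2$ are \emph{harmonic or subharmonic} with disjoint supports and vanish at $x_0$, to the setting of sub-solutions with an $L^\infty$ forcing term (this is the Caffarelli–Jerison–Kenig generalization). The ACF formula would give $\psi(r)\le \psi(r_0/2)$ directly; our task is to show that the bound $\Delta u_i\ge -\gamma$ (rather than $\Delta u_i \ge 0$), together with $u_i\in L^\infty$, yields an \emph{almost-monotonicity} statement sufficient to bound $\psi$ by a constant depending only on $r_0,\gamma,N,\|u_i\|_\infty$.

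After translating so that $x_0=0$, the first step is to differentiate the two factors
\[
\Phi_i(r):=\frac{1}{r^2}\int_{B_r}\frac{|\nabla u_i|^2}{|x|^{N-2}}\,dx,\qquad i=1,2,
\]
by the co-area formula and integration by parts (using the fundamental solution $|x|^{2-N}$). A standard computation shows that
\[
\frac{\Phi_i'(r)}{\Phi_i(r)}\ge \frac{2}{r}\bigl(\beta_i(r)-1\bigr)+\text{error}_i(r),
\]
where $\beta_i(r)$ is the characteristic exponent associated to $\lambda_1(\Omega_i(r))$, with $\Omega_i(r):=\{u_i>0\}\cap\partial B_r$, and $\text{error}_i(r)$ is an error term produced by the bound $\Delta u_i\ge -\gamma$; this error term can be estimated by $C(\gamma,\|u_i\|_\infty) r$ using Cauchy–Schwarz and the $L^\infty$ bound on $u_i$ (this is where the CJK argument diverges from ACF).

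The second key step is the Friedland–Hayman inequality on $S^{N-1}$: since $u_1 u_2\equiv 0$, the traces $\Omega_1(r)$ and $\Omega_2(r)$ are disjoint open subsets of $\partial B_r$, and therefore
\[
\beta_1(r)+\beta_2(r)\ge 2\qquad\text{for a.e. }r\in(0,r_0).
\]
Summing the two log-derivative inequalities then gives
\[
\frac{\psi'(r)}{\psi(r)}\ge -C(\gamma,\|u_1\|_\infty,\|u_2\|_\infty,N)\,r^{-1+\alpha}
\]
for some $\alpha>0$ (the positive power of $r$ on the right comes from the explicit form of the $L^\infty$ error terms on balls of radius $r$). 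Integrating from $r$ to $r_0/2$ yields
\[
\log\psi(r)\le \log\psi(r_0/2)+C'
\]
with $C'$ depending only on $r_0,\gamma,\|u_1\|_\infty,\|u_2\|_\infty,N$, and $\psi(r_0/2)$ is itself bounded by a multiple of $\|\nabla u_1\|_{L^2(B_{r_0})}^2\|\nabla u_2\|_{L^2(B_{r_0})}^2$, which is finite since $u_i\in H^1_0(B_{r_0})\cap L^\infty$.

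The main obstacle is twofold: first, producing the sharp form of the log-derivative inequality with a controllable error in the perturbed setting (this requires a delicate splitting $u_i=v_i+w_i$ where $v_i$ is harmonic on spherical shells and $w_i$ carries the $\gamma$-contribution, plus careful use of the Poincaré inequality on $\partial B_r$); second, verifying the Friedland–Hayman estimate when $\Omega_i(r)$ may be irregular, which is handled by a symmetrization/cap-rearrangement argument reducing the question to spherical caps. Given the technicality and since this lemma is only invoked as an auxiliary tool, I would present the argument above in outline and refer to \cite{CJK2002} (and the expositions in \cite{ContiTerraciniVerziniVariation,Velichkovpaper}) for the full computational details.
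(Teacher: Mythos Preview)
The paper does not prove this lemma at all: it states the result and refers the reader to \cite{CJK2002} (with expositions in \cite{ContiTerraciniVerziniVariation,Velichkovpaper}) for the proof. Your proposal does exactly the same thing in its final paragraph, and additionally supplies a correct high-level outline of the CJK argument (log-derivative of $\Phi_i$, Friedland--Hayman on the sphere, integrable error from the $L^\infty$ forcing), so you are fully consistent with---indeed more informative than---the paper's own treatment.
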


\bigskip
\noindent {\bf Acknowledgements:} P\^edra D. S. Andrade is partially supported by CAPES-INCTMat - Brazil and by the Portuguese government through FCT - Funda\c c\~ao para a Ci\^encia e a Tecnologia, I.P., under the projects UID/MAT/04459/2020. Hugo Tavares is partially supported by the Portuguese government through FCT - Funda\c c\~ao para a Ci\^encia e a Tecnologia, I.P., under the projects UID/MAT/04459/2020 and PTDC/MAT-PUR/1788/2020. Ederson Moreira dos Santos is partially supported by CNPq grant 309006/2019-8. Makson S. Santos is partially supported by FAPESP
grant 2021/04524-0 and by the Portuguese government through FCT- Funda\c c\~ao para a Ci\^encia e a Tecnologia, I.P., under the projects UID/MAT/04459/2020 and PTDC/MAT-PUR/1788/2020. This study was financed in part by the Coordena\c c\~ao de Aperfei\c coamento de Pessoal de N\'ivel Superior - Brazil (CAPES) -
Finance Code 001.

The authors would like to thank Dario Mazzoleni for some interesting discussions and for pointing out some references.

\medbreak

\bigskip

\noindent\textsc{P\^edra D. S. Andrade, Makson S. Santos and Hugo Tavares}\\
Departamento de Matem\'atica do Instituto Superior T\'ecnico\\
Universidade de Lisboa\\
1049-001 Lisboa, Portugal\\
\noindent\texttt{pedra.andrade@tecnico.ulisboa.pt, makson.santos@tecnico.ulisboa.pt, \\hugo.n.tavares@tecnico.ulisboa.pt}

\vspace{.15in}

\noindent\textsc{Ederson Moreira dos Santos}\\
Instituto de Ci\^encias Matem\'aticas e de Computa\c c\~ao\\
Universidade de S\~ao Paulo -- USP\\
13566-590, Centro, S\~ao Carlos - SP, Brazil\\
\noindent\texttt{ederson@icmc.usp.br}


\begin{thebibliography}{10}

\bibitem{Alper}
Onur Alper.
\newblock On the singular set of free interface in an optimal partition
  problem.
\newblock {\em Comm. Pure Appl. Math.}, 73(4):855--915, 2020.

\bibitem{BaernsteinBook}
Albert Baernstein, II.
\newblock {\em Symmetrization in analysis}, volume~36 of {\em New Mathematical
  Monographs}.
\newblock Cambridge University Press, Cambridge, 2019.
\newblock With David Drasin and Richard S. Laugesen, With a foreword by Walter
  Hayman.

\bibitem{BogoselVelichkov2016}
Beniamin Bogosel and Bozhidar Velichkov.
\newblock A multiphase shape optimization problem for eigenvalues: qualitative
  study and numerical results.
\newblock {\em SIAM J. Numer. Anal.}, 54(1):210--241, 2016.

\bibitem{Briancon2004}
Tanguy Brian\c{c}on.
\newblock Regularity of optimal shapes for the {D}irichlet's energy with volume
  constraint.
\newblock {\em ESAIM Control Optim. Calc. Var.}, 10(1):99--122, 2004.

\bibitem{BHP2005}
Tanguy Brian\c{c}on, Mohammed Hayouni, and Michel Pierre.
\newblock Lipschitz continuity of state functions in some optimal shaping.
\newblock {\em Calc. Var. Partial Differential Equations}, 23(1):13--32, 2005.

\bibitem{BrianconLamboley2009}
Tanguy Brian\c{c}on and Jimmy Lamboley.
\newblock Regularity of the optimal shape for the first eigenvalue of the
  {L}aplacian with volume and inclusion constraints.
\newblock {\em Ann. Inst. H. Poincar\'{e} C Anal. Non Lin\'{e}aire},
  26(4):1149--1163, 2009.

\bibitem{Bucur2012}
Dorin Bucur.
\newblock Minimization of the {$k$}-th eigenvalue of the {D}irichlet
  {L}aplacian.
\newblock {\em Arch. Ration. Mech. Anal.}, 206(3):1073--1083, 2012.

\bibitem{BB2005}
Dorin Bucur and Giuseppe Buttazzo.
\newblock {\em Variational methods in shape optimization problems}, volume~65
  of {\em Progress in Nonlinear Differential Equations and their Applications}.
\newblock Birkh\"{a}user Boston, Inc., Boston, MA, 2005.

\bibitem{BucurButtazzoHenrot}
Dorin Bucur, Giuseppe Buttazzo, and Antoine Henrot.
\newblock Existence results for some optimal partition problems.
\newblock {\em Adv. Math. Sci. Appl.}, 8(2):571--579, 1998.

\bibitem{BMPV2015}
Dorin Bucur, Dario Mazzoleni, Aldo Pratelli, and Bozhidar Velichkov.
\newblock Lipschitz regularity of the eigenfunctions on optimal domains.
\newblock {\em Arch. Ration. Mech. Anal.}, 216(1):117--151, 2015.

\bibitem{BucurVelichkov2014}
Dorin Bucur and Bozhidar Velichkov.
\newblock Multiphase shape optimization problems.
\newblock {\em SIAM J. Control Optim.}, 52(6):3556--3591, 2014.

\bibitem{BZ1995}
Dorin Bucur and Jean-Paul Zol\'{e}sio.
\newblock {$N$}-dimensional shape optimization under capacitary constraint.
\newblock {\em J. Differential Equations}, 123(2):504--522, 1995.

\bibitem{ButtazzoDalMaso}
Giuseppe Buttazzo and Gianni Dal~Maso.
\newblock Shape optimization for {D}irichlet problems: relaxed formulation and
  optimality conditions.
\newblock {\em Appl. Math. Optim.}, 23(1):17--49, 1991.

\bibitem{BD1993}
Giuseppe Buttazzo and Gianni Dal~Maso.
\newblock An existence result for a class of shape optimization problems.
\newblock {\em Arch. Rational Mech. Anal.}, 122(2):183--195, 1993.

\bibitem{CJK2002}
Luis Caffarelli, David Jerison, and Carlos Kenig.
\newblock Some new monotonicity theorems with applications to free boundary
  problems.
\newblock {\em Ann. of Math. (2)}, 155(2):369--404, 2002.

\bibitem{CaffarelliLin2007}
Luis Caffarelli and Fanghua Lin.
\newblock An optimal partition problem for eigenvalues.
\newblock {\em J. Sci. Comput.}, 31(1-2):5--18, 2007.

\bibitem{ContiTerraciniVerziniOptimalNonlinear}
M.~Conti, S.~Terracini, and G.~Verzini.
\newblock An optimal partition problem related to nonlinear eigenvalues.
\newblock {\em J. Funct. Anal.}, 198(1):160--196, 2003.

\bibitem{ContiTerraciniVerziniAsymptotic}
Monica Conti, Susanna Terracini, and Gianmaria Verzini.
\newblock Asymptotic estimates for the spatial segregation of competitive
  systems.
\newblock {\em Adv. Math.}, 195(2):524--560, 2005.

\bibitem{ContiTerraciniVerziniOPP}
Monica Conti, Susanna Terracini, and Gianmaria Verzini.
\newblock On a class of optimal partition problems related to the {F}u\v c\'\i
  k spectrum and to the monotonicity formulae.
\newblock {\em Calc. Var. Partial Differential Equations}, 22(1):45--72, 2005.

\bibitem{ContiTerraciniVerziniVariation}
Monica Conti, Susanna Terracini, and Gianmaria Verzini.
\newblock A variational problem for the spatial segregation of
  reaction-diffusion systems.
\newblock {\em Indiana Univ. Math. J.}, 54(3):779--815, 2005.

\bibitem{Crouzeix1991}
Michel Crouzeix.
\newblock Variational approach of a magnetic shaping problem.
\newblock {\em European J. Mech. B Fluids}, 10(5):527--536, 1991.

\bibitem{DalMasoMosco1986}
Gianni Dal~Maso and Umberto Mosco.
\newblock Wiener criteria and energy decay for relaxed {D}irichlet problems.
\newblock {\em Arch. Rational Mech. Anal.}, 95(4):345--387, 1986.

\bibitem{DalMasoMosco1987}
Gianni Dal~Maso and Umberto Mosco.
\newblock Wiener's criterion and {$\Gamma$}-convergence.
\newblock {\em Appl. Math. Optim.}, 15(1):15--63, 1987.

\bibitem{DePhilippisSpolaorVelichkov2021}
Guido De Philippis, Luca Spolaor and Bozhidar Velichkov.
\newblock Regularity of the free boundary for the two-phase {B}ernoulli
              problem.
\newblock {\em Invent. Math.}, 225(2):347--394, 2021.

\bibitem{Descloux1994}
Jean Descloux.
\newblock A stability result for the magnetic shaping problem.
\newblock {\em Z. Angew. Math. Phys.}, 45(4):543--555, 1994.

\bibitem{Faber1923}
Georg Faber.
\newblock {B}eweis dass unter allen homogenen {M}embranen von gleicher
  {F}l\"{a}che und gleicher {S}pannung die kreisf\"{o}rmige den tiefsten
  {G}rundton gibt.
\newblock {\em Sitz. bayer. Akad. Wiss.}, pages 169--172, 1923.

\bibitem{GiTr}
David Gilbarg and Neil Trudinger.
\newblock {\em Elliptic partial differential equations of second order}.
\newblock Classics in Mathematics. Springer-Verlag, Berlin, 2001.
\newblock Reprint of the 1998 edition.

\bibitem{GS1996}
Bj\"{o}rn Gustafsson and Henrik Shahgholian.
\newblock Existence and geometric properties of solutions of a free boundary
  problem in potential theory.
\newblock {\em J. Reine Angew. Math.}, 473:137--179, 1996.

\bibitem{Hayouni1999}
Mohammed Hayouni.
\newblock Lipschitz continuity of the state function in a shape optimization
  problem.
\newblock {\em J. Convex Anal.}, 6(1):71--90, 1999.

\bibitem{Henrot2006}
Antoine Henrot.
\newblock {\em Extremum problems for eigenvalues of elliptic operators}.
\newblock Frontiers in Mathematics. Birkh\"{a}user Verlag, Basel, 2006.

\bibitem{Henrot2017}
Antoine Henrot, editor.
\newblock {\em Shape optimization and spectral theory}.
\newblock De Gruyter Open, Warsaw, 2017.

\bibitem{HenrotPierre2005}
Antoine Henrot and Michel Pierre.
\newblock {\em Variation et optimisation de formes}, volume~48 of {\em
  Math\'{e}matiques \& Applications (Berlin) [Mathematics \& Applications]}.
\newblock Springer, Berlin, 2005.
\newblock Une analyse g\'{e}om\'{e}trique. [A geometric analysis].

\bibitem{Hong}
Imsik Hong.
\newblock On an inequality concerning the eigenvalue problem of membrane.
\newblock {\em Kodai Math. Sem. Rep.}, 6:113--114, 1954.
\newblock \{Volume numbers not printed on issues until Vol. {{\bf{7}}}
  (1955)\}.

\bibitem{Krahn1925}
Edgar Krahn.
\newblock \"{U}ber eine von {R}ayleigh formulierte {M}inimaleigenschaft des
  {K}reises.
\newblock {\em Math. Ann.}, 94(1):97--100, 1925.

\bibitem{Krahn1926}
Edgar Krahn.
\newblock {\"U}ber {Minimaleigenschaften} der {Kugel} in drei und mehr
  {Dimensionen}.
\newblock Acta {Univ}. {Dorpat} {A} 9, 1-44, 1926.


\bibitem{Mazzoleni-Pratelli2013}
Dario Mazzoleni and Aldo Pratelli.
\newblock Existence of minimizers for spectral problems.
\newblock {\em J. Math. Pures Appl. (9)}, 100(3):433--453, 2013.

\bibitem{MazzoleniTerraciniVelichkov2017}
Dario Mazzoleni, Susanna Terracini and Bozhidar Velichkov.
\newblock Regularity of the optimal sets for some spectral functionals.
\newblock {\em Geom. Funct. Anal. (27)}, 27(2):373--426, 2017.

\bibitem{NTTV1}
Benedetta Noris, Hugo Tavares, Susanna Terracini, and Gianmaria Verzini.
\newblock Uniform {H}{\"o}lder bounds for nonlinear {S}chr{\"o}dinger systems
  with strong competition.
\newblock {\em Comm. Pure Appl. Math.}, 63(3):267--302, 2010.

\bibitem{RamosTavaresTerracini}
Miguel Ramos, Hugo Tavares, and Susanna Terracini.
\newblock Extremality conditions and regularity of solutions to optimal
  partition problems involving {L}aplacian eigenvalues.
\newblock {\em Arch. Ration. Mech. Anal.}, 220(1):363--443, 2016.

\bibitem{Rayleigh1945}
John William~Strutt Rayleigh, Baron.
\newblock {\em The {T}heory of {S}ound}.
\newblock Dover Publications, New York, N.Y., 1945.
\newblock 2d ed.

\bibitem{RussTreyVelichkov2019}
Emmanuel Russ, Baptiste Trey and Bozhidar Velichkov
\newblock Existence and regularity of optimal shapes for elliptic
              operators with drift.
\newblock {\em Calc. Var. Partial Differential Equations}, 58(6), Paper No. 199, 59, 2019

\bibitem{STTZ2018}
Nicola Soave, Hugo Tavares, Susanna Terracini, and Alessandro Zilio.
\newblock Variational problems with long-range interaction.
\newblock {\em Arch. Ration. Mech. Anal.}, 228(3):743--772, 2018.

\bibitem{STZ2021}
Nicola Soave, Hugo Tavares, and Alessandro Zilio.
\newblock Free boundary problems with long-range interactions: uniform
  {L}ipschitz estimates in the radius.
\newblock {\em Mathematische Annalen}, 2022.

\bibitem{TavaresTerracini1}
Hugo Tavares and Susanna Terracini.
\newblock Regularity of the nodal set of segregated critical configurations
  under a weak reflection law.
\newblock {\em Calc. Var. Partial Differential Equations}, 45(3-4):273--317,
  2012.

\bibitem{Teixeira-Snelson2022}
Eduardo Teixeira and Stanley Snelson.
\newblock Optimal domains for elliptic eigenvalue problems with rough
  coefficients.
\newblock {\em arXiv preprint arXiv:2209.10572}, 2022.

\bibitem{Velichkovpaper}
Bozhidar Velichkov.
\newblock A note on the monotonicity formula of {C}affarelli-{J}erison-{K}enig.
\newblock {\em Atti Accad. Naz. Lincei Rend. Lincei Mat. Appl.},
  25(2):165--189, 2014.





\bibitem{Velichkov2015}
Bozhidar Velichkov.
\newblock {\em Existence and regularity results for some shape optimization
  problems}, volume~19 of {\em Tesi. Scuola Normale Superiore di Pisa (Nuova
  Series) [Theses of Scuola Normale Superiore di Pisa (New Series)]}.
\newblock Edizioni della Normale, Pisa, 2015.

\end{thebibliography}
\end{document}